\pgfplotsset{compat=newest}
\newtheorem{theorem}{Theorem}
\newtheorem{assumption}[theorem]{Assumption}
\newtheorem{lemma}[theorem]{Lemma}
\newtheorem{remark}[theorem]{Remark}
\newcommand{\patch}{\omega}
\newcommand{\Patch}{\Omega}
\newcommand{\tpatch}{\widetilde{\omega}}
\newcommand{\tPatch}{\widetilde{\Omega}}
\newcommand{\Tref}{\widehat{T}}
\newcommand{\Eref}{\widehat{E}}
\newcommand{\zref}{\widehat{z}}
\newcommand{\II}{\mathcal{I}}
\newcommand{\conv}{\operatorname{conv}}
\newcommand{\qintpz}{J_{0}^{p+1}}
\newcommand{\qintp}{J^{p+1}}
\newcommand{\qintzpz}{I_{0}^{p+1}}
\newcommand{\qintzp}{I^{p+1}}
\newcommand{\Pip}{\Pi^{p}}
\def\di{\mathrm{d}}
\DeclareMathOperator{\linhull}{span}
\newcommand{\ip}[2]{(#1\hspace*{.5mm},#2)}
\newcommand{\dual}[2]{\langle#1\hspace*{.5mm},#2\rangle}
\newcommand{\norm}[3][]{#1\|#2#1\|_{#3}}
\newcommand{\diam}{\mathrm{diam}}
\def\div{{\rm div\,}}
\newcommand{\Hdivset}[1]{\boldsymbol{H}(\div;#1)}
\newcommand{\set}[2]{\big\{#1\,:\,#2\big\}}
\newcommand{\RT}{\ensuremath{\boldsymbol{RT}}}
\newcommand{\R}{\ensuremath{\mathbb{R}}}
\newcommand{\N}{\ensuremath{\mathbb{N}}}
\newcommand{\LL}{\ensuremath{\boldsymbol{L}}}
\newcommand{\vv}{\ensuremath{\boldsymbol{v}}}
\newcommand{\ww}{\ensuremath{\boldsymbol{w}}}
\newcommand{\TT}{\ensuremath{\mathcal{T}}}
\newcommand{\OO}{\ensuremath{\mathcal{O}}}
\newcommand{\EE}{\ensuremath{\mathcal{E}}}
\newcommand{\normal}{\ensuremath{{\boldsymbol{n}}}}
\newcommand{\VV}{\ensuremath{\mathcal{V}}}
\newcommand{\ssigma}{{\boldsymbol\sigma}}
\newcommand{\ttau}{{\boldsymbol\tau}}
\newcommand{\qq}{{\boldsymbol{q}}}
\newcommand{\uu}{\boldsymbol{u}}
\newcommand{\rr}{{\boldsymbol{r}}}
\newcommand{\nulo}{0}
\definecolor{Azul}{HTML}{173F8A}
\newcommand{\logLogSlopeTriangle}[6]
{
    % #1. Relative offset in x direction.
    % #2. Width in x direction, so xA-xB.
    % #3. Relative offset in y direction.
    % #4. Slope d(y)/d(log10(x)).
    % #5. Plot options.
    % #6. Name of slope

    \pgfplotsextra
    {
        \pgfkeysgetvalue{/pgfplots/xmin}{\xmin}
        \pgfkeysgetvalue{/pgfplots/xmax}{\xmax}
        \pgfkeysgetvalue{/pgfplots/ymin}{\ymin}
        \pgfkeysgetvalue{/pgfplots/ymax}{\ymax}

        % Calculate auxilliary quantities, in relative sense.
        \pgfmathsetmacro{\xArel}{#1}
        \pgfmathsetmacro{\yArel}{#3}
        \pgfmathsetmacro{\xBrel}{#1-#2}
        \pgfmathsetmacro{\yBrel}{\yArel}
        \pgfmathsetmacro{\xCrel}{\xArel}
        %\pgfmathsetmacro{\yCrel}{ln(\yC/exp(\ymin))/ln(exp(\ymax)/exp(\ymin))} % REPLACE THIS EXPRESSION WITH AN EXPRESSION INDEPENDENT OF \yC TO PREVENT THE 'DIMENSION TOO LARGE' ERROR.

        \pgfmathsetmacro{\lnxB}{\xmin*(1-(#1-#2))+\xmax*(#1-#2)} % in [xmin,xmax].
        \pgfmathsetmacro{\lnxA}{\xmin*(1-#1)+\xmax*#1} % in [xmin,xmax].
        \pgfmathsetmacro{\lnyA}{\ymin*(1-#3)+\ymax*#3} % in [ymin,ymax].
        \pgfmathsetmacro{\lnyC}{\lnyA+#4*(\lnxA-\lnxB)}
        \pgfmathsetmacro{\yCrel}{\lnyC-\ymin)/(\ymax-\ymin)} % THE IMPROVED EXPRESSION WITHOUT 'DIMENSION TOO LARGE' ERROR.

        % Define coordinates for \draw. MIND THE 'rel axis cs' as opposed to the 'axis cs'.
        \coordinate (A) at (rel axis cs:\xArel,\yArel);
        \coordinate (B) at (rel axis cs:\xBrel,\yCrel);
        \coordinate (C) at (rel axis cs:\xCrel,\yCrel);

        % Draw slope triangle.
        \draw[#5]   (A)--
                    (B)-- 
                    (C)-- node[pos=0.5,anchor=west] {#6}
                    cycle;
    }
}
\newcommand{\logLogSlopeTriangleBelow}[6]
{
    % #1. Relative offset in x direction.
    % #2. Width in x direction, so xA-xB.
    % #3. Relative offset in y direction.
    % #4. Slope d(y)/d(log10(x)).
    % #5. Plot options.
    % #6. Name of slope

    \pgfplotsextra
    {
        \pgfkeysgetvalue{/pgfplots/xmin}{\xmin}
        \pgfkeysgetvalue{/pgfplots/xmax}{\xmax}
        \pgfkeysgetvalue{/pgfplots/ymin}{\ymin}
        \pgfkeysgetvalue{/pgfplots/ymax}{\ymax}

        % Calculate auxilliary quantities, in relative sense.
        \pgfmathsetmacro{\xArel}{#1}
        \pgfmathsetmacro{\yArel}{#3}
        \pgfmathsetmacro{\xBrel}{#1-#2}
        \pgfmathsetmacro{\yBrel}{\yArel}
        \pgfmathsetmacro{\xCrel}{\xArel}
        %\pgfmathsetmacro{\yCrel}{ln(\yC/exp(\ymin))/ln(exp(\ymax)/exp(\ymin))} % REPLACE THIS EXPRESSION WITH AN EXPRESSION INDEPENDENT OF \yC TO PREVENT THE 'DIMENSION TOO LARGE' ERROR.

        \pgfmathsetmacro{\lnxB}{\xmin*(1-(#1-#2))+\xmax*(#1-#2)} % in [xmin,xmax].
        \pgfmathsetmacro{\lnxA}{\xmin*(1-#1)+\xmax*#1} % in [xmin,xmax].
        \pgfmathsetmacro{\lnyA}{\ymin*(1-#3)+\ymax*#3} % in [ymin,ymax].
        \pgfmathsetmacro{\lnyC}{\lnyA+#4*(\lnxA-\lnxB)}
        \pgfmathsetmacro{\yCrel}{\lnyC-\ymin)/(\ymax-\ymin)} % THE IMPROVED EXPRESSION WITHOUT 'DIMENSION TOO LARGE' ERROR.

        % Define coordinates for \draw. MIND THE 'rel axis cs' as opposed to the 'axis cs'.
        \coordinate (A) at (rel axis cs:\xArel,\yArel);
        \coordinate (B) at (rel axis cs:\xBrel,\yCrel);
        \coordinate (C) at (rel axis cs:\xBrel,\yArel);

        % Draw slope triangle.
        \draw[#5]   (A)--
                    (B)-- node[pos=0.5,anchor=east] {#6}
                    (C)-- 
                    cycle;
    }
}
\begin{document}

% Title and Authors
\title{Quasi-interpolators with application to postprocessing in finite element methods}
\date{\today}

\author{Thomas F\"uhrer}
\address{Facultad de Matem\'{a}ticas, Pontificia Universidad Cat\'{o}lica de Chile, Santiago, Chile}
\email{tofuhrer@mat.uc.cl}
\author{Manuel A. S\'anchez}
\address{Instituto de Ingenier\'{i}a Matem\'{a}tica y Computacional, Pontificia Universidad Cat\'{o}lica de Chile, Santiago, Chile}
\email{manuel.sanchez@uc.cl}

\thanks{{\bf Acknowledgment.} 
This work was supported by ANID through FONDECYT project 1210391 (TF) and 1221189 (MS), and by Centro Nacional de Inteligencia Artificial CENIA, FB210017, Basal ANID Chile (MS)}

\keywords{Postprocessing, quasi-interpolation, mixed FEM, HDG method, DPG method}
\subjclass[2010]{65D05, % Numerical interpolation
                 65N30 % FEM, Galerkin
                 %65N12 % Stability and convergence of numerical methods
                 }
%%%%%%%%%%%%%%%%%%%%%%%%%%%%%%%%%%%%%%%%%%%%%%%%%%%%%%%%%%%%%%%%%%%%%
% ABSTRACT
\begin{abstract}
We design quasi-interpolation operators based on piecewise polynomial weight functions of degree less than or equal to $p$ that map into the space of continuous piecewise polynomials of degree less than or equal to $p+1$. We show that the operators have optimal approximation properties, i.e., of order $p+2$.
This can be exploited to enhance the accuracy of finite element approximations provided that they are sufficiently close to the orthogonal projection of the exact solution on the space of piecewise polynomials of degree less than or equal to $p$.
Such a condition is met by various numerical schemes, e.g., mixed finite element methods and discontinuous Petrov--Galerkin methods.
Contrary to well-established postprocessing techniques which also require this or a similar closeness property, our proposed method delivers a conforming postprocessed solution that does not rely on discrete approximations of derivatives nor local versions of the underlying PDE.
In addition, we introduce a second family of quasi-interpolation operators that are based on piecewise constant weight functions, which can be used, e.g., to postprocess solutions of hybridizable discontinuous Galerkin methods. Another application of our proposed operators is the definition of projection operators bounded in Sobolev spaces with negative indices. Numerical examples demonstrate the effectiveness of our approach.
\end{abstract}
%%%%%%%%%%%%%%%%%%%%%%%%%%%%%%%%%%%%%%%%%%%%%%%%%%%%%%%%%%%%%%%%%%%%%
% Make Title
\maketitle
%%%%%%%%%%%%%%%%%%%%%%%%%%%%%%%%%%%%%%%%%%%%%%%%%%%%%%%%%%%%%%%%%%%%%

%\tableofcontents

%\newpage

\section{Introduction}\label{s1:introduction}

Quasi-interpolation operators are essential tools in the analysis of finite element methods. They prove to be especially useful when nodal interpolants are not well defined. The fundamental idea is averaging over a neighborhood instead of evaluating at specific points. In a priori error analysis, they help derive convergence rates for solutions with reduced regularity, see, e.g.,~\cite[Sec.~4.8]{BrennerScott}. One of their main applications is in a posteriori error estimation, where they are used to localize residuals, see, e.g.~\cite{CarstensenQuasiInt99}.

In the seminal work~\cite{Clement75} Cl\'ement introduced a very important class of quasi-interpolators. 
Since then, many authors have worked on the topic, including~\cite{SZ_90,BernardiGirault98,ErnGuermond17,Oswald93,Melenk05} to name a few. 
The mentioned references deal with positive order Sobolev spaces, while, recently, interest in locally defined quasi-interpolators in Sobolev spaces of negative index has grown, see~\cite{DieningStornTscherpel23} and references therein.

This work introduces and studies a novel class of quasi-interpolators in $H^1$. We briefly discuss their main features next.
Let $\TT$ denote a triangulation, with mesh-size parameter $h$, of the Lipschitz domain $\Omega\subseteq \R^d$, with $d=1,2$, and $P^p(\TT)$ the space of piecewise polynomials of degree $\leq p$, and $P_c^p(\TT) = P^p(\TT)\cap C^0(\Omega)$ the space of continuous piecewise polynomials.
Our quasi-interpolation operators $\qintpz\colon L^2(\Omega)\to P_{c,0}^{p+1}(\TT)=P_c^{p+1}(\TT)\cap H_0^1(\Omega)$ have the form 
\begin{align*}
  \qintpz v = \sum_{\star\in\II_{p+1,0}} \ip{v}{\phi_\star}_\Omega \eta_\star
\end{align*}
where $\II_{p+1,0}$ denotes an index set with each $\star\in\II_{p+1,0}$ corresponding to an interior vertex, interior edge, or element degree of freedom, and $\eta_{\star}$ corresponds to shape functions of the space $P_{c,0}^{p+1}(\TT)$. Here, $(\cdot, \cdot)_{\Omega}$ denotes the $L^2$ inner product over $\Omega$.
While the form of the operator is canonical, the novelty lies in the construction of the weight functions $\phi_\star$. 
We define $\phi_\star\in P^p(\TT)$ such that the operator locally preserves polynomials of degree $\leq p+1$. Consequently, a Bramble--Hilbert argument and local $L^2$ bounds yield
\begin{align*}
  \norm{v-\qintpz v}{\Omega} \lesssim h^{p+2}\norm{D^{p+2}v}\Omega.
\end{align*}

Critical for the existence of weight functions $\phi_\star$ is a property on the intersection of orthogonal polynomials on triangular patches analyzed in~\cite{KoornwinderSauter15}. There, the authors used a weighted inner product and their result has applications in adaptive $hp$-FEM~\cite{BankParsaniaSauter13}.
In this work, we demonstrate that the proof ideas from~\cite{KoornwinderSauter15} carry over to the case when the canonical $L^2$ inner product is used. 
Additionally, ~\cite{MixedFEMHm1} studied a weighted Cl\'ement quasi-interpolator with the same properties for the lowest-order case $p=0$. In Section~\ref{sec:constr:lowest} below, we recall the construction presented there.

One of the main consequences of the design of our operators is that
\begin{align}\label{eq:intro:supconv}
  \norm{u-\qintpz u_\TT}\Omega = \OO(h^{p+2})
\end{align}
provided that $u_\TT\in P^p(\TT)$ is \emph{superclose} to $\Pip u$, where $\Pip$ is the $L^2$ projection onto $P^p(\TT)$, i.e., 
\begin{align*}
  \norm{\Pip u-u_\TT}\Omega = \OO(h^{p+2}).
\end{align*}
The latter is satisfied if $u_\TT$ is the solution of, e.g., mixed finite element methods (mixed FEM)~\cite{Stenberg91} or discontinuous Petrov--Galerkin methods with optimal test functions (DPG) based on ultraweak formulations~\cite{SupConv2}, to name a few.
We stress that~\eqref{eq:intro:supconv} can be interpreted as a \emph{superconvergence} result since $\qintpz = \qintpz \Pip$, i.e., $\qintpz$ ``sees'' only piecewise polynomials of degree $\leq p$ and the approximation estimate $\norm{v-\Pip v}\Omega \lesssim h^{p+1}\norm{D^{p+1}v}\Omega$ is sharp.
Let us note that contrary to other popular (local) postprocessing techniques, like~\cite{Stenberg91,BrambleXu89}, 
our proposed postprocessing scheme does not require the use of an approximate gradient nor the use of a local PDE approximation.

A second family of quasi-interpolation operators, denoted by $\qintzpz$, is introduced, which uses piecewise constant weight functions $\phi_\star$. These operators have similar properties as $\qintpz$, but the existence of weight functions is non-trivial and requires certain additional assumptions on the underlying mesh. These operators render the optimal approximation property \eqref{eq:intro:supconv} provided that a more relaxed supercloseness assumption for the projection onto piecewise constants of the approximation is satisfied. We exemplify a use case for enhancing the accuracy of approximations stemming from a hybridizable discontinuous Galerkin (HDG) method.

Quasi-interpolators in negative order Sobolev spaces have become an important tool in the analysis of multilevel norms/preconditioners~\cite{MultilevelNorms21,StevensonVanVenetie20}, regularization of rough load terms~\cite{VeeserZanotti18,FHK22,MixedFEMHm1}, and convergence rates for space-time methods~\cite{DieningStornTscherpel23}.
To demonstrate another application of the quasi-interpolator $\qintpz$, we introduce projection operators onto $P^p(\TT)$ that are based on $\qintpz$ and study some of their properties. We note that the operators constructed in~\cite{DieningStornTscherpel23} map into the space of piecewise continuous polynomials.

The outline of this article is as follows:
In Section~\ref{sec:notation} we introduce notation, polynomial spaces and some standard assumptions on basis functions. 
Section~\ref{sec:qintpz} introduces the first family of quasi-interpolation operators. We first design the operators for vanishing traces in Section~\ref{sec:def} and then without boundary constraints in Section~\ref{sec:def:general}. Theorems~\ref{thm:qintpz} and~\ref{thm:qintp} show the main results on the approximation properties of these operators. Section~\ref{sec:constr} gives an overview on how to construct the local weight functions $\phi_\star$ and consequently the operators. 
In Section~\ref{sec:qintzpz} we introduce the second family of quasi-interpolators based on piecewise constant weight functions. Their construction is based on Assumption~\ref{ass:vertices:constant}.
The definitions and corresponding main results are found in Section~\ref{sec:def:P0} and~\ref{sec:def:general:P0}.
Applications to postprocessing of finite element solutions are discussed in Section~\ref{sec:appl:postproc}.
Then, Section~\ref{sec:appl:proj} demonstrates an application of the quasi-interpolators to define projection operators onto $P^p(\TT)$ that are uniformly bounded in $H^{-1}(\Omega)$.
Some numerical experiments are provided in Section~\ref{sec:numeric}.
This work is concluded by Section~\ref{sec:conclusion} and
Appendix~\ref{sec:proof2d} provides a proof of the technical Lemma~\ref{lem:fullrank} for $d=2$.

% --------------------------------------------------
\section{Notations}\label{sec:notation}
% --------------------------------------------------
In this section, we introduce notations and definitions and collect some basic results on polynomial spaces. 
Throughout, let $\Omega\subset \R^d$ denote a bounded Lipschitz domain with polygonal boundary $(d=2)$ or an open interval $(d=1)$.

\subsection{Triangulation}

Let $\VV$ denote the set of vertices, $\VV_0$ interior vertices, $\EE$ the set of edges, $\EE_0$ interior edges, 
and $\TT$ the set of elements or triangulation of the domain $\Omega$. Let $h_\TT\in L^\infty(\Omega)$ with $h_\TT|_T :=\diam(T)$ be the mesh-size function of the triangulation and set $h:=\norm{h_\TT}{L^\infty(\Omega)}$. Note that for $d=1$ we set $\EE=\emptyset$. 
For each $E\in\EE$ we denote by $\VV_E$ its vertices. Similarly, 
$\VV_T$ stands for vertices of the element $T\in\TT$. 

We next formulate a mild assumption on the mesh.
\begin{assumption}\label{ass:vertices}
  There is at least one interior vertex, i.e., $\VV_0\neq\emptyset$.
\end{assumption}
Note that, for $d=1$, Assumption \ref{ass:vertices} is equivalent to $\#\TT\geq 2$. For $d=2$ our assumption excludes meshes as shown in the left plot of Figure~\ref{fig:mesh1}. The assumption can easily be guaranteed by dividing one element $T$, as shown in the right plot of Figure~\ref{fig:mesh1}.
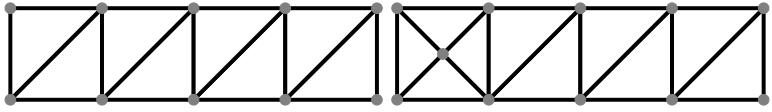
\begin{figure}
  \begin{center}
    % \subfloat[]{
\begin{tikzpicture}
\begin{axis}[hide axis,
    axis equal,
    width=0.45\textwidth,
axis equal image = true]

\addplot[patch,color=white,%mesh, 
faceted color = black, line width = 1.5pt,
patch table ={data/eleMeshAss1.dat}] file{data/cooMeshAss1.dat};
\addplot[mark=*,color=gray,only marks] table[x index=0,y index=1] {data/cooMeshAss1.dat};
\end{axis}
\end{tikzpicture}
% }
% \subfloat[]{
\begin{tikzpicture}
\begin{axis}[hide axis,
    axis equal,
    width=0.45\textwidth,
axis equal image = true]

\addplot[patch,color=white,%mesh, 
faceted color = black, line width = 1.5pt,
patch table ={data/eleMeshAss2.dat}] file{data/cooMeshAss2.dat};
\addplot[mark=*,color=gray,only marks] table[x index=0,y index=1] {data/cooMeshAss2.dat};
\end{axis}
\end{tikzpicture}
% }
  \end{center}
  \caption{Left triangulation does not satisfy Assumption~\ref{ass:vertices}. Right triangulation satisfies Assumption~\ref{ass:vertices} with $R=4$.}
  \label{fig:mesh1}
\end{figure}

For $S\subseteq\overline\Omega$ we denote by $\patch(S) \subseteq \TT$ its element patch, i.e., all $T\in\TT$ with $\overline{S}\cap\overline{T}\neq \emptyset$. The corresponding domain is denoted by $\patch(S)$.
Higher-order element patches are defined recursively by 
\begin{align*}
  \patch^{(1)}(S) = \patch(S), \quad \patch^{(n+1)}(S) = \patch(\Patch^{(n)}(S)), \quad n\in\N,
\end{align*}
where $\Patch^{(n)}(S)$ denotes the domain corresponding to $\patch^{(n)}(S)$, see Figure~\ref{fig:patch1} for a visualization.
In particular, for interior vertex $z\in \VV_0$, we simply define  $\patch_{z} := \patch(\{z\})$ and $\Omega_{z} := \Patch(\{z\})$. In Section~\ref{sec:def:general} we define $\patch_z$ for $z\in\VV\setminus\VV_0$.

Assumption~\ref{ass:vertices} implies that there exists $R=R(\TT)\in\N$ such that:
\begin{itemize}
  \item For each $v\in\VV\setminus\VV_0$ there exists $z_v\in \VV_0$ and $r_v\leq R$ with
    \begin{align*}
      v \in \partial{\Patch^{(r_v)}(\{z_v\})}.
    \end{align*}
  \item For each $E\in\EE$ there exists $z_E\in\VV_0$ and $r_E\leq R$ with
    \begin{align*}
      E\subseteq \begin{cases}
        \Patch^{(r_E)}(\{z_E\}) & \text{if } E\in \EE_0, \\
        \partial \Patch^{(r_E)}(\{z_E\}) & \text{if } E\in\EE\setminus\EE_0.
      \end{cases}
    \end{align*}
  \item For each $T\in\TT$ there exists $z_T\in \VV_0$ and $r_T\leq R$ with
    \begin{align*}
      T\subseteq \Patch^{(r_T)}(\{z_T\}).
    \end{align*}
\end{itemize}
For practical reasons, one chooses $z_\bullet$ so that $r_\bullet$ is minimal.
If $v\in(\VV\setminus\VV_0)\cap \VV_T$ and $T$ has in interior vertex $z$, then one can choose $z_v=z$, $r_v=1$. 
If $E\in\EE\cap \EE_T$ and either $E$ or $T$ has an interior vertex $z$, then one can choose $z_E = z$, $r_E = 1$.
If $T\in\TT$ has an interior vertex $z$ then one can choose $z_T = z$ and $r_T = 1$. We conclude that in most scenarios, $r_\bullet = 1$; thus, $R= 1$ holds.

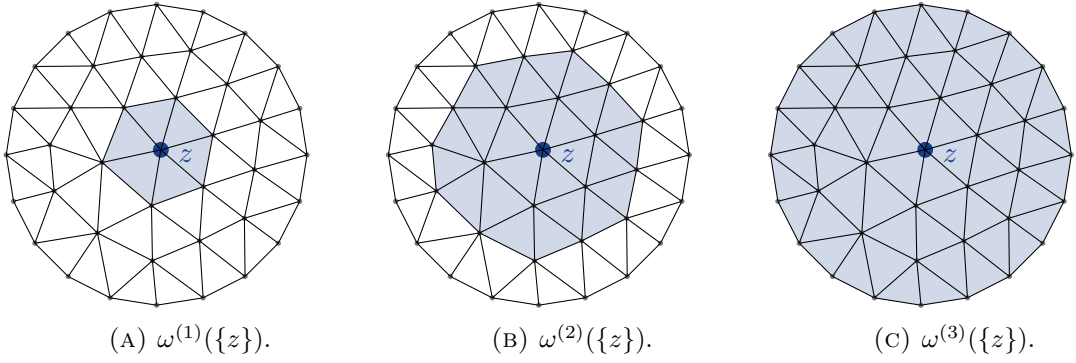
\begin{figure}
\begin{subfigure}{0.3\textwidth}
    \begin{tikzpicture}[scale=4]

% Define vertices
\coordinate (A0) at (0, 0.5);
\coordinate (A1) at (0.5, 0);
\coordinate (A2) at (1, 0.5);
\coordinate (A3) at (0.5, 1);
\coordinate (A4) at (0.0244716, 0.345492);
\coordinate (A5) at (0.0954907, 0.206109);
\coordinate (A6) at (0.206109, 0.0954907);
\coordinate (A7) at (0.345492, 0.0244716);
\coordinate (A8) at (0.654508, 0.0244716);
\coordinate (A9) at (0.793891, 0.0954907);
\coordinate (A10) at (0.904509, 0.206109);
\coordinate (A11) at (0.975528, 0.345492);
\coordinate (A12) at (0.975528, 0.654508);
\coordinate (A13) at (0.904509, 0.793891);
\coordinate (A14) at (0.793891, 0.904509);
\coordinate (A15) at (0.654508, 0.975528);
\coordinate (A16) at (0.345492, 0.975528);
\coordinate (A17) at (0.206109, 0.904509);
\coordinate (A18) at (0.0954907, 0.793891);
\coordinate (A19) at (0.0244716, 0.654508);
\coordinate (A20) at (0.159269, 0.379996);
\coordinate (A21) at (0.303367, 0.240319);
\coordinate (A22) at (0.486059, 0.147937);
\coordinate (A23) at (0.642521, 0.213929);
\coordinate (A24) at (0.791732, 0.289024);
\coordinate (A25) at (0.824893, 0.450791);
\coordinate (A26) at (0.845676, 0.613781);
\coordinate (A27) at (0.734247, 0.735212);
\coordinate (A28) at (0.613167, 0.846542);
\coordinate (A29) at (0.450184, 0.826664);
\coordinate (A30) at (0.289651, 0.795691);
\coordinate (A31) at (0.214897, 0.656412);
\coordinate (A32) at (0.145385, 0.533223);
\coordinate (A33) at (0.319046, 0.474615);
\coordinate (A34) at (0.687023, 0.565484);
\coordinate (A35) at (0.655041, 0.394412);
\coordinate (A36) at (0.482921, 0.331124);
\coordinate (A37) at (0.391363, 0.657008);
\coordinate (A38) at (0.56398, 0.68937);
\coordinate (A39) at (0.513911, 0.51673);

% first patch
\fill[Azul!20] (A39) -- (A36) -- (A35) -- cycle;
\fill[Azul!20] (A39) -- (A35) -- (A34) -- cycle;
\fill[Azul!20] (A39) -- (A34) -- (A38) -- cycle;
\fill[Azul!20] (A39) -- (A38) -- (A37) -- cycle;
\fill[Azul!20] (A39) -- (A37) -- (A33) -- cycle;
\fill[Azul!20] (A39) -- (A33) -- (A36) -- cycle;

% Draw additional vertices
    \foreach \vertex in {A0, A1, A2, A3, A4, A5, A6, A7, A8, A9, A10, A11, A12, A13, A14, A15, A16, A17, A18, A19, A20, A21, A22, A23, A24, A25, A26, A27, A28, A29, A30, A31, A32, A33, A34, A35, A36, A37, A38}
        \fill[gray] (\vertex) circle (0.25pt);
    \fill[Azul] (A39) circle (0.75pt) node[right,xshift=.1cm, yshift=-0.075cm] {$z$};
% \draw (A1) -- (A5);
% \draw (A21) -- (A5);
% \draw (A32) -- (A33);
% \draw (A33) -- (A34);
% \draw (A34) -- (A32);
% \draw (A21) -- (A34);
% \draw (A21) -- (A33);
% \draw (A1) -- (A21);
% \draw (A5) -- (A6);
% \draw (A6) -- (A21);
% \draw (A6) -- (A7);
% \draw (A7) -- (A8);
% \draw (A8) -- (A2);
% \draw (A2) -- (A9);
% \draw (A9) -- (A10);
% \draw (A10) -- (A11);
% \draw (A11) -- (A12);
% \draw (A12) -- (A3);
% \draw (A3) -- (A13);
% \draw (A13) -- (A14);
% \draw (A14) -- (A15);
% \draw (A15) -- (A16);
% \draw (A16) -- (A4);
% \draw (A4) -- (A17);
% \draw (A17) -- (A18);
% \draw (A18) -- (A19);
% \draw (A19) -- (A20);
% \draw (A20) -- (A1);

\draw (A0) -- (A4);
\draw (A0) -- (A19);
\draw (A0) -- (A20);
\draw (A0) -- (A32);
\draw (A1) -- (A7);
\draw (A1) -- (A8);
\draw (A1) -- (A22);
\draw (A2) -- (A11);
\draw (A2) -- (A12);
\draw (A2) -- (A25);
\draw (A2) -- (A26);
\draw (A3) -- (A15);
\draw (A3) -- (A16);
\draw (A3) -- (A28);
\draw (A3) -- (A29);
\draw (A4) -- (A5);
\draw (A4) -- (A20);
\draw (A5) -- (A6);
\draw (A5) -- (A20);
\draw (A5) -- (A21);
\draw (A6) -- (A7);
\draw (A6) -- (A21);
\draw (A7) -- (A21);
\draw (A7) -- (A22);
\draw (A8) -- (A9);
\draw (A8) -- (A22);
\draw (A8) -- (A23);
\draw (A9) -- (A10);
\draw (A9) -- (A23);
\draw (A9) -- (A24);
\draw (A10) -- (A11);
\draw (A10) -- (A24);
\draw (A11) -- (A24);
\draw (A11) -- (A25);
\draw (A12) -- (A13);
\draw (A12) -- (A26);
\draw (A13) -- (A14);
\draw (A13) -- (A26);
\draw (A13) -- (A27);
\draw (A14) -- (A15);
\draw (A14) -- (A27);
\draw (A14) -- (A28);
\draw (A15) -- (A28);
\draw (A16) -- (A17);
\draw (A16) -- (A29);
\draw (A16) -- (A30);
\draw (A17) -- (A18);
\draw (A17) -- (A30);
\draw (A18) -- (A19);
\draw (A18) -- (A30);
\draw (A18) -- (A31);
\draw (A19) -- (A31);
\draw (A19) -- (A32);
\draw (A20) -- (A21);
\draw (A20) -- (A32);
\draw (A20) -- (A33);
\draw (A21) -- (A22);
\draw (A21) -- (A33);
\draw (A21) -- (A36);
\draw (A22) -- (A23);
\draw (A22) -- (A36);
\draw (A23) -- (A24);
\draw (A23) -- (A35);
\draw (A23) -- (A36);
\draw (A24) -- (A25);
\draw (A24) -- (A35);
\draw (A25) -- (A26);
\draw (A25) -- (A34);
\draw (A25) -- (A35);
\draw (A26) -- (A27);
\draw (A26) -- (A34);
\draw (A27) -- (A28);
\draw (A27) -- (A34);
\draw (A27) -- (A38);
\draw (A28) -- (A29);
\draw (A28) -- (A38);
\draw (A29) -- (A30);
\draw (A29) -- (A37);
\draw (A29) -- (A38);
\draw (A30) -- (A31);
\draw (A30) -- (A37);
\draw (A31) -- (A32);
\draw (A31) -- (A33);
\draw (A31) -- (A37);
\draw (A32) -- (A33);
\draw (A33) -- (A36);
\draw (A33) -- (A37);
\draw (A33) -- (A39);
\draw (A34) -- (A35);
\draw (A34) -- (A38);
\draw (A34) -- (A39);
\draw (A35) -- (A36);
\draw (A35) -- (A39);
\draw (A36) -- (A39);
\draw (A37) -- (A38);
\draw (A37) -- (A39);
\draw (A38) -- (A39);

\end{tikzpicture}
    \caption{$\patch^{(1)}(\{z\})$.}
    \label{fig:patch1}
\end{subfigure}
\begin{subfigure}{0.3\textwidth}
\begin{tikzpicture}[scale=4]
% Define vertices
\coordinate (A0) at (0, 0.5);
\coordinate (A1) at (0.5, 0);
\coordinate (A2) at (1, 0.5);
\coordinate (A3) at (0.5, 1);
\coordinate (A4) at (0.0244716, 0.345492);
\coordinate (A5) at (0.0954907, 0.206109);
\coordinate (A6) at (0.206109, 0.0954907);
\coordinate (A7) at (0.345492, 0.0244716);
\coordinate (A8) at (0.654508, 0.0244716);
\coordinate (A9) at (0.793891, 0.0954907);
\coordinate (A10) at (0.904509, 0.206109);
\coordinate (A11) at (0.975528, 0.345492);
\coordinate (A12) at (0.975528, 0.654508);
\coordinate (A13) at (0.904509, 0.793891);
\coordinate (A14) at (0.793891, 0.904509);
\coordinate (A15) at (0.654508, 0.975528);
\coordinate (A16) at (0.345492, 0.975528);
\coordinate (A17) at (0.206109, 0.904509);
\coordinate (A18) at (0.0954907, 0.793891);
\coordinate (A19) at (0.0244716, 0.654508);
\coordinate (A20) at (0.159269, 0.379996);
\coordinate (A21) at (0.303367, 0.240319);
\coordinate (A22) at (0.486059, 0.147937);
\coordinate (A23) at (0.642521, 0.213929);
\coordinate (A24) at (0.791732, 0.289024);
\coordinate (A25) at (0.824893, 0.450791);
\coordinate (A26) at (0.845676, 0.613781);
\coordinate (A27) at (0.734247, 0.735212);
\coordinate (A28) at (0.613167, 0.846542);
\coordinate (A29) at (0.450184, 0.826664);
\coordinate (A30) at (0.289651, 0.795691);
\coordinate (A31) at (0.214897, 0.656412);
\coordinate (A32) at (0.145385, 0.533223);
\coordinate (A33) at (0.319046, 0.474615);
\coordinate (A34) at (0.687023, 0.565484);
\coordinate (A35) at (0.655041, 0.394412);
\coordinate (A36) at (0.482921, 0.331124);
\coordinate (A37) at (0.391363, 0.657008);
\coordinate (A38) at (0.56398, 0.68937);
\coordinate (A39) at (0.513911, 0.51673);

%seconfirst patch
\fill[Azul!20] (A39) -- (A36) -- (A35) -- cycle;
\fill[Azul!20] (A39) -- (A35) -- (A34) -- cycle;
\fill[Azul!20] (A39) -- (A34) -- (A38) -- cycle;
\fill[Azul!20] (A39) -- (A38) -- (A37) -- cycle;
\fill[Azul!20] (A39) -- (A37) -- (A33) -- cycle;
\fill[Azul!20] (A39) -- (A33) -- (A36) -- cycle;

\fill[Azul!20] (A36) -- (A33) -- (A21) -- cycle;
\fill[Azul!20] (A36) -- (A21) -- (A22) -- cycle;
\fill[Azul!20] (A36) -- (A22) -- (A23) -- cycle;
\fill[Azul!20] (A36) -- (A23) -- (A35) -- cycle;

\fill[Azul!20] (A35) -- (A23) -- (A24) -- cycle;
\fill[Azul!20] (A35) -- (A24) -- (A25) -- cycle;
\fill[Azul!20] (A35) -- (A25) -- (A34) -- cycle;

\fill[Azul!20] (A34) -- (A25) -- (A26) -- cycle;
\fill[Azul!20] (A34) -- (A26) -- (A27) -- cycle;
\fill[Azul!20] (A34) -- (A27) -- (A38) -- cycle;

\fill[Azul!20] (A38) -- (A27) -- (A28) -- cycle;
\fill[Azul!20] (A38) -- (A28) -- (A29) -- cycle;
\fill[Azul!20] (A38) -- (A29) -- (A37) -- cycle;

\fill[Azul!20] (A37) -- (A29) -- (A30) -- cycle;
\fill[Azul!20] (A37) -- (A30) -- (A31) -- cycle;
\fill[Azul!20] (A37) -- (A31) -- (A33) -- cycle;

\fill[Azul!20] (A33) -- (A31) -- (A32) -- cycle;
\fill[Azul!20] (A33) -- (A32) -- (A20) -- cycle;
\fill[Azul!20] (A33) -- (A20) -- (A21) -- cycle;
% Draw additional vertices
    \foreach \vertex in {A0, A1, A2, A3, A4, A5, A6, A7, A8, A9, A10, A11, A12, A13, A14, A15, A16, A17, A18, A19, A20, A21, A22, A23, A24, A25, A26, A27, A28, A29, A30, A31, A32, A33, A34, A35, A36, A37, A38}
        \fill[gray] (\vertex) circle (0.25pt);
    \fill[Azul] (A39) circle (0.75pt) node[right,xshift=.1cm, yshift=-0.075cm] {$z$};

% \draw (A1) -- (A5);
% \draw (A21) -- (A5);
% \draw (A32) -- (A33);
% \draw (A33) -- (A34);
% \draw (A34) -- (A32);
% \draw (A21) -- (A34);
% \draw (A21) -- (A33);
% \draw (A1) -- (A21);
% \draw (A5) -- (A6);
% \draw (A6) -- (A21);
% \draw (A6) -- (A7);
% \draw (A7) -- (A8);
% \draw (A8) -- (A2);
% \draw (A2) -- (A9);
% \draw (A9) -- (A10);
% \draw (A10) -- (A11);
% \draw (A11) -- (A12);
% \draw (A12) -- (A3);
% \draw (A3) -- (A13);
% \draw (A13) -- (A14);
% \draw (A14) -- (A15);
% \draw (A15) -- (A16);
% \draw (A16) -- (A4);
% \draw (A4) -- (A17);
% \draw (A17) -- (A18);
% \draw (A18) -- (A19);
% \draw (A19) -- (A20);
% \draw (A20) -- (A1);

\draw (A0) -- (A4);
\draw (A0) -- (A19);
\draw (A0) -- (A20);
\draw (A0) -- (A32);
\draw (A1) -- (A7);
\draw (A1) -- (A8);
\draw (A1) -- (A22);
\draw (A2) -- (A11);
\draw (A2) -- (A12);
\draw (A2) -- (A25);
\draw (A2) -- (A26);
\draw (A3) -- (A15);
\draw (A3) -- (A16);
\draw (A3) -- (A28);
\draw (A3) -- (A29);
\draw (A4) -- (A5);
\draw (A4) -- (A20);
\draw (A5) -- (A6);
\draw (A5) -- (A20);
\draw (A5) -- (A21);
\draw (A6) -- (A7);
\draw (A6) -- (A21);
\draw (A7) -- (A21);
\draw (A7) -- (A22);
\draw (A8) -- (A9);
\draw (A8) -- (A22);
\draw (A8) -- (A23);
\draw (A9) -- (A10);
\draw (A9) -- (A23);
\draw (A9) -- (A24);
\draw (A10) -- (A11);
\draw (A10) -- (A24);
\draw (A11) -- (A24);
\draw (A11) -- (A25);
\draw (A12) -- (A13);
\draw (A12) -- (A26);
\draw (A13) -- (A14);
\draw (A13) -- (A26);
\draw (A13) -- (A27);
\draw (A14) -- (A15);
\draw (A14) -- (A27);
\draw (A14) -- (A28);
\draw (A15) -- (A28);
\draw (A16) -- (A17);
\draw (A16) -- (A29);
\draw (A16) -- (A30);
\draw (A17) -- (A18);
\draw (A17) -- (A30);
\draw (A18) -- (A19);
\draw (A18) -- (A30);
\draw (A18) -- (A31);
\draw (A19) -- (A31);
\draw (A19) -- (A32);
\draw (A20) -- (A21);
\draw (A20) -- (A32);
\draw (A20) -- (A33);
\draw (A21) -- (A22);
\draw (A21) -- (A33);
\draw (A21) -- (A36);
\draw (A22) -- (A23);
\draw (A22) -- (A36);
\draw (A23) -- (A24);
\draw (A23) -- (A35);
\draw (A23) -- (A36);
\draw (A24) -- (A25);
\draw (A24) -- (A35);
\draw (A25) -- (A26);
\draw (A25) -- (A34);
\draw (A25) -- (A35);
\draw (A26) -- (A27);
\draw (A26) -- (A34);
\draw (A27) -- (A28);
\draw (A27) -- (A34);
\draw (A27) -- (A38);
\draw (A28) -- (A29);
\draw (A28) -- (A38);
\draw (A29) -- (A30);
\draw (A29) -- (A37);
\draw (A29) -- (A38);
\draw (A30) -- (A31);
\draw (A30) -- (A37);
\draw (A31) -- (A32);
\draw (A31) -- (A33);
\draw (A31) -- (A37);
\draw (A32) -- (A33);
\draw (A33) -- (A36);
\draw (A33) -- (A37);
\draw (A33) -- (A39);
\draw (A34) -- (A35);
\draw (A34) -- (A38);
\draw (A34) -- (A39);
\draw (A35) -- (A36);
\draw (A35) -- (A39);
\draw (A36) -- (A39);
\draw (A37) -- (A38);
\draw (A37) -- (A39);
\draw (A38) -- (A39);

\end{tikzpicture}
 \caption{$\patch^{(2)}(\{z\})$.}
 \label{fig:patch2}
\end{subfigure}
\begin{subfigure}{0.3\textwidth}
    \begin{tikzpicture}[scale=4]
% Define vertices
\coordinate (A0) at (0, 0.5);
\coordinate (A1) at (0.5, 0);
\coordinate (A2) at (1, 0.5);
\coordinate (A3) at (0.5, 1);
\coordinate (A4) at (0.0244716, 0.345492);
\coordinate (A5) at (0.0954907, 0.206109);
\coordinate (A6) at (0.206109, 0.0954907);
\coordinate (A7) at (0.345492, 0.0244716);
\coordinate (A8) at (0.654508, 0.0244716);
\coordinate (A9) at (0.793891, 0.0954907);
\coordinate (A10) at (0.904509, 0.206109);
\coordinate (A11) at (0.975528, 0.345492);
\coordinate (A12) at (0.975528, 0.654508);
\coordinate (A13) at (0.904509, 0.793891);
\coordinate (A14) at (0.793891, 0.904509);
\coordinate (A15) at (0.654508, 0.975528);
\coordinate (A16) at (0.345492, 0.975528);
\coordinate (A17) at (0.206109, 0.904509);
\coordinate (A18) at (0.0954907, 0.793891);
\coordinate (A19) at (0.0244716, 0.654508);
\coordinate (A20) at (0.159269, 0.379996);
\coordinate (A21) at (0.303367, 0.240319);
\coordinate (A22) at (0.486059, 0.147937);
\coordinate (A23) at (0.642521, 0.213929);
\coordinate (A24) at (0.791732, 0.289024);
\coordinate (A25) at (0.824893, 0.450791);
\coordinate (A26) at (0.845676, 0.613781);
\coordinate (A27) at (0.734247, 0.735212);
\coordinate (A28) at (0.613167, 0.846542);
\coordinate (A29) at (0.450184, 0.826664);
\coordinate (A30) at (0.289651, 0.795691);
\coordinate (A31) at (0.214897, 0.656412);
\coordinate (A32) at (0.145385, 0.533223);
\coordinate (A33) at (0.319046, 0.474615);
\coordinate (A34) at (0.687023, 0.565484);
\coordinate (A35) at (0.655041, 0.394412);
\coordinate (A36) at (0.482921, 0.331124);
\coordinate (A37) at (0.391363, 0.657008);
\coordinate (A38) at (0.56398, 0.68937);
\coordinate (A39) at (0.513911, 0.51673);

%seconfirst patch
\fill[Azul!20] (A0) -- (A4) -- (A5) -- (A6) -- (A7) -- (A1) -- (A8) -- (A9) -- (A10) -- (A11) -- (A2) -- (A12) -- (A13) -- (A14) -- (A15) -- (A3) -- (A16) -- (A17) -- (A18) -- (A19) -- (A0)-- cycle;

% Draw additional vertices
    \foreach \vertex in {A0, A1, A2, A3, A4, A5, A6, A7, A8, A9, A10, A11, A12, A13, A14, A15, A16, A17, A18, A19, A20, A21, A22, A23, A24, A25, A26, A27, A28, A29, A30, A31, A32, A33, A34, A35, A36, A37, A38}
        \fill[gray] (\vertex) circle (0.25pt);
    \fill[Azul] (A39) circle (0.75pt) node[right,xshift=.1cm, yshift=-0.075cm] {$z$};

% \draw (A1) -- (A5);
% \draw (A21) -- (A5);
% \draw (A32) -- (A33);
% \draw (A33) -- (A34);
% \draw (A34) -- (A32);
% \draw (A21) -- (A34);
% \draw (A21) -- (A33);
% \draw (A1) -- (A21);
% \draw (A5) -- (A6);
% \draw (A6) -- (A21);
% \draw (A6) -- (A7);
% \draw (A7) -- (A8);
% \draw (A8) -- (A2);
% \draw (A2) -- (A9);
% \draw (A9) -- (A10);
% \draw (A10) -- (A11);
% \draw (A11) -- (A12);
% \draw (A12) -- (A3);
% \draw (A3) -- (A13);
% \draw (A13) -- (A14);
% \draw (A14) -- (A15);
% \draw (A15) -- (A16);
% \draw (A16) -- (A4);
% \draw (A4) -- (A17);
% \draw (A17) -- (A18);
% \draw (A18) -- (A19);
% \draw (A19) -- (A20);
% \draw (A20) -- (A1);

\draw (A0) -- (A4);
\draw (A0) -- (A19);
\draw (A0) -- (A20);
\draw (A0) -- (A32);
\draw (A1) -- (A7);
\draw (A1) -- (A8);
\draw (A1) -- (A22);
\draw (A2) -- (A11);
\draw (A2) -- (A12);
\draw (A2) -- (A25);
\draw (A2) -- (A26);
\draw (A3) -- (A15);
\draw (A3) -- (A16);
\draw (A3) -- (A28);
\draw (A3) -- (A29);
\draw (A4) -- (A5);
\draw (A4) -- (A20);
\draw (A5) -- (A6);
\draw (A5) -- (A20);
\draw (A5) -- (A21);
\draw (A6) -- (A7);
\draw (A6) -- (A21);
\draw (A7) -- (A21);
\draw (A7) -- (A22);
\draw (A8) -- (A9);
\draw (A8) -- (A22);
\draw (A8) -- (A23);
\draw (A9) -- (A10);
\draw (A9) -- (A23);
\draw (A9) -- (A24);
\draw (A10) -- (A11);
\draw (A10) -- (A24);
\draw (A11) -- (A24);
\draw (A11) -- (A25);
\draw (A12) -- (A13);
\draw (A12) -- (A26);
\draw (A13) -- (A14);
\draw (A13) -- (A26);
\draw (A13) -- (A27);
\draw (A14) -- (A15);
\draw (A14) -- (A27);
\draw (A14) -- (A28);
\draw (A15) -- (A28);
\draw (A16) -- (A17);
\draw (A16) -- (A29);
\draw (A16) -- (A30);
\draw (A17) -- (A18);
\draw (A17) -- (A30);
\draw (A18) -- (A19);
\draw (A18) -- (A30);
\draw (A18) -- (A31);
\draw (A19) -- (A31);
\draw (A19) -- (A32);
\draw (A20) -- (A21);
\draw (A20) -- (A32);
\draw (A20) -- (A33);
\draw (A21) -- (A22);
\draw (A21) -- (A33);
\draw (A21) -- (A36);
\draw (A22) -- (A23);
\draw (A22) -- (A36);
\draw (A23) -- (A24);
\draw (A23) -- (A35);
\draw (A23) -- (A36);
\draw (A24) -- (A25);
\draw (A24) -- (A35);
\draw (A25) -- (A26);
\draw (A25) -- (A34);
\draw (A25) -- (A35);
\draw (A26) -- (A27);
\draw (A26) -- (A34);
\draw (A27) -- (A28);
\draw (A27) -- (A34);
\draw (A27) -- (A38);
\draw (A28) -- (A29);
\draw (A28) -- (A38);
\draw (A29) -- (A30);
\draw (A29) -- (A37);
\draw (A29) -- (A38);
\draw (A30) -- (A31);
\draw (A30) -- (A37);
\draw (A31) -- (A32);
\draw (A31) -- (A33);
\draw (A31) -- (A37);
\draw (A32) -- (A33);
\draw (A33) -- (A36);
\draw (A33) -- (A37);
\draw (A33) -- (A39);
\draw (A34) -- (A35);
\draw (A34) -- (A38);
\draw (A34) -- (A39);
\draw (A35) -- (A36);
\draw (A35) -- (A39);
\draw (A36) -- (A39);
\draw (A37) -- (A38);
\draw (A37) -- (A39);
\draw (A38) -- (A39);

\end{tikzpicture}
    \caption{$\patch^{(3)}(\{z\})$.}
    \label{fig:patch3}
\end{subfigure}
\caption{Visualization of patches.}
  \label{fig:patches}
\end{figure}

\subsection{Polynomial basis}
Let $S\subset \R^d$ denote a domain with positive measure. We denote by $P^p(S)$ the space of polynomials with domain $S$ and degree $\leq p\in\N_0$.
Let
\begin{align*}
  \Tref = \begin{cases}
    (0,1), & \text{if } d=1, \\
    \set{(x,y)\in\R^2}{x>0,y>0,x+y<1}, & \text{if } d=2,
  \end{cases}
\end{align*}
denote the reference simplex with vertices $\widehat\VV$, edges $\widehat\EE$, where $\widehat\EE=\emptyset$ for $d=1$, and consider a set of basis functions for the space $P^p(\Tref)$, $(p>0)$, associated to the vertices, edges (if $d=2$) and interior degrees of freedom, with the following properties,
\begin{subequations}\label{eq:basisTref}
\begin{align}
  & \eta^{(\Tref)}_{\zref}, \, \text{for }\zref\in\widehat\VV, \text{are the barycentric coordinates functions } (\eta^{(\Tref)}_{\zref}(z) = \delta_{z,\zref}), \\ 
  &  \eta^{(\Tref)}_{\Eref,j}, \,\text{for } j=1,\ldots,N_p, \, \Eref\in\widehat\EE, \,\text{with } \eta^{(\Tref)}_{\Eref,j}|_{E} = 0 \text{ for } E\in\widehat\EE\setminus\{\Eref\}, \\
  & \eta^{(\Tref)}_{\Tref,k}, \,\text{for } k=1,\ldots,M_p, \,\text{with }\eta^{(\Tref)}_{\Tref,k}|_{\partial\Tref} = 0. 
\end{align}
\end{subequations}
Here and throughout this work, $\delta_{\cdot,\cdot}$ denotes the Kronecker-$\delta$.

For $p=0$ we use the constant function $\chi_{\Tref} \equiv 1$ as basis for $P^0(\Tref)$.
Note that $\norm{\eta^{(\Tref)}_{\zref}}{L^\infty(\Tref)}=  1$ and we also assume that
\begin{align*}
  \norm{\eta^{(\Tref)}_{\Eref,j}}{L^\infty(\Tref)} \eqsim 1, \quad \norm{\eta^{(\Tref)}_{\Tref,k}}{L^\infty(\Tref)}\eqsim 1.
\end{align*}
Using an affine bijection $F_T\colon \Tref\to T$, for $T\in\TT$, one defines a basis of $P^p(T)$ ($p>0$) denoted by 
\begin{align*}
  \{\eta^{(T)}_{z}:\, z\in\VV_T\}\cup \{ \eta^{(T)}_{E,j}:\, j=1,\ldots,N_p, E\in\EE_T\}\cup \{\eta^{(T)}_{T,k}:\,  k=1,\ldots,M_p\},
\end{align*}
where $N_p$ and $M_p$ correspond to the number of interior basis functions per edge and element, respectively, i.e.
\[
N_p = \begin{cases}
    0,& \text{if }d=1, \\
    p-1,& \text{if }d=2,
\end{cases}, \quad 
M_p = 
\begin{cases}
    p-1,& \text{if }d=1, \\
    (p-1)(p-2)/2,& \text{if }d=2.
\end{cases}
\]
For the construction of hierarchical basis functions on a reference triangle, see, e.g.,~\cite[Section~2.2]{HighOrderFEM}.

Here, and throughout this document, for a domain $S\subset \R^d$, $L^2(S)$ denotes the Lebesgue space of square-integrable functions whose inner product and respective norm are denoted by $\ip{\cdot}\cdot_S$ and $\norm{\cdot}S$.
Furthermore, we define $H^1(S) = \set{v\in L^2(S)}{\nabla v\in L^2(S)^d}$, $H_0^1(S) = \set{v\in H^1(S)}{v|_{\partial S} = 0}$ with dual space $H^{-1}(S) = \big(H_0^1(S)\big)'$ where duality is understood with respect to the extended $L^2(S)$ scalar product. 
We also use the notation
\begin{align*}
  \norm{D^k u}\Omega^2 = \sum_{\boldsymbol{\alpha}\in \N_0^d, \, |\boldsymbol{\alpha}|=k} \norm{D^{\boldsymbol{\alpha}}u}\Omega^2.
\end{align*}

As usual, we glue together the polynomial functions on element interfaces to obtain the space of continuous piecewise polynomials, namely
\begin{align*}
  P^p_c(\TT) = P^p(\TT)\cap H^1(\Omega),\quad\text{where }
  P^p(\TT) = \set{v\in L^2(\Omega)}{v|_T\in P^p(T), \, T\in\TT}.
\end{align*}
We also define the space $P^p_{c,0}(\TT) = P^p(\TT)\cap H_0^1(\Omega).$

Defining  the index sets
\begin{align*}
  \II_{p} &:= \{z:\,z\in\VV \}  \cup \{(E,j):\, j=1,\dots,N_{p}, \,E\in\EE\} \cup \{(T,k):\, k=1,\dots,M_{p},T\in\TT\}, \\
  \II_{p,0} &:= \{z:\,z\in\VV_0 \}  \cup \{(E,j):\, j=1,\dots,N_{p}, \,E\in\EE_0\} \cup \{(T,k):\, k=1,\dots,M_{p},T\in\TT\},  \\
  \II_{p,T} &:= \{z:\,z\in\VV_T \}  \cup \{(E,j):\, j=1,\dots,N_{p}, \,E\in\EE_T\} \cup \{(T,k):\, k=1,\dots,M_{p}\},
\end{align*}
it follows that the set of functions $\{\eta_{\star}:\,\star\in \II_p\}$ is a a basis of $P^p_c(\TT)$
and the set $\set{\eta_\star}{\star\in\II_{p,0}}$ is a basis of $P^p_{c,0}(\TT)$.

We end this section defining a canonical extension for polyomials, $\Lambda_{D,D'}^{(p)}\colon P^p(D)\to P^p(D')$, where $D, D'$ satisfy $D\subseteq D'$, i.e., $(\Lambda_{D,D'}^{(p)}q)|_D = q$ for any $q\in P^p(D)$.  When no confusion is possible we simply write $q$ instead of $\Lambda_{D,D'}^{(p)} q$.

% -------------------------------------------------------------------------------
\section{Quasi-interpolator based on piecewise polynomials}\label{sec:qintpz}
% -------------------------------------------------------------------------------
In this section, we define the first family of quasi-interpolation operators with piecewise polynomial weight functions of degree up to $p$.
We begin the presentation in Section~\ref{sec:qintpz:aux} with some auxiliary results, including the critical Lemma~\ref{lem:fullrank}. 
Section~\ref{sec:def} defines and analyzes the novel quasi-interpolation operators with vanishing boundary values, and the corresponding main result on properties of these operators is presented in Theorem~\ref{thm:qintpz}. 
The crucial aspect of constructing weight functions, which holds significant interest for implementation, is thoroughly discussed in Section~\ref{sec:constr}.
A particular construction for the lowest-order case is recalled from the literature in Section~\ref{sec:constr:lowest}. 
Section~\ref{sec:def:general} delves into the definition and analysis of quasi-interpolators that operate without constraints on boundary values, a feature that significantly broadens their potential applications.

\subsection{Auxiliary results}\label{sec:qintpz:aux}
From properties of polynomials and choice of basis functions we deduce the following result.
\begin{lemma}\label{lem:independence}
  Let $T,T'\in\TT$ denote two distinct elements and $T\cup T'\subset S$. Consider $q\in P^p(S)$, $p\geq 1$, with
  \begin{align*}
    q &= \sum_{z\in \VV_T} \alpha_z \Lambda^{(p)}_{T,S}(\eta_z|_T) + \sum_{E\in\EE_T}\sum_{j=1}^{N_p} \alpha_{E,j} \Lambda^{(p)}_{T,S}(\eta_{E,j}|_T) + \sum_{k=1}^{M_p} \alpha_{T,k} \Lambda^{(p)}_{T,S}(\eta_{T,k}|_T) \\
    &= \sum_{z\in \VV_{T'}} \alpha_z' \Lambda^{(p)}_{T',S}(\eta_z|_{T'}) + \sum_{E\in\EE_{T'}}\sum_{j=1}^{N_p} \alpha_{E,j}' \Lambda^{(p)}_{T',S}(\eta_{E,j}|_{T'}) + \sum_{k=1}^{M_p} \alpha_{T',k} \Lambda^{(p)}_{T',S}(\eta_{T',k}|_{T'}).
  \end{align*}

  Therefore:
  \begin{itemize}
      \item If $z\in \VV_T\cap \VV_{T'}$ then $\alpha_z = \alpha_z'$.
      \item If $E\in \EE_T\cap \EE_{T'}$ then $\alpha_{E,j} = \alpha_{E,j}'$ for $j=1,\dots,N_p$. 
  \end{itemize}
\end{lemma}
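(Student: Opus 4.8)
The plan is to prove both assertions by a localization argument: evaluate the polynomial identity at the shared vertex to obtain the first, and restrict it to the shared edge to obtain the second. The only ingredients needed are the structural properties of the chosen basis functions in~\eqref{eq:basisTref}, transported to $T$ and $T'$ via the affine maps $F_T$, $F_{T'}$: the vertex functions satisfy $\eta_z^{(T)}(z')=\delta_{z,z'}$ for $z,z'\in\VV_T$; every edge function $\eta_{E,j}^{(T)}$ vanishes at all vertices of $T$ (each vertex of $T$ lies on some edge $E'\neq E$, on which $\eta_{E,j}^{(T)}$ vanishes); every interior function $\eta_{T,k}^{(T)}$ vanishes on $\partial T$; and — this being exactly the compatibility built into the basis so that $\{\eta_\star:\star\in\II_p\}$ spans $P_c^p(\TT)$ — for a shared edge $E\in\EE_T\cap\EE_{T'}$ the traces coincide, $\eta_z^{(T)}|_E=\eta_z^{(T')}|_E$ for $z\in\VV_E$ and $\eta_{E,j}^{(T)}|_E=\eta_{E,j}^{(T')}|_E$ for $j=1,\dots,N_p$, and the family $\{\eta_z^{(T)}|_E:z\in\VV_E\}\cup\{\eta_{E,j}^{(T)}|_E:j=1,\dots,N_p\}$ is a basis of $P^p(E)$.

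For the vertex claim, fix $z\in\VV_T\cap\VV_{T'}$. Since $z\in\overline T\cap\overline{T'}\subset\overline S$ and each $\Lambda^{(p)}_{T,S}(\cdot)$ is a polynomial that agrees on $T$, hence by continuity at $z$, with the corresponding basis function on $T$, evaluating the first representation of $q$ at $z$ leaves only the term $\alpha_z\,\eta_z^{(T)}(z)=\alpha_z$: the remaining vertex terms vanish by the nodal property, and the edge and interior terms vanish at $z$. Evaluating the second representation at $z$ gives $\alpha_z'$ in the same way, so $\alpha_z=\alpha_z'$.

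For the edge claim, which is relevant only for $d=2$ since $\EE=\emptyset$ when $d=1$, fix $E\in\EE_T\cap\EE_{T'}$. Restricting the first representation of $q$ to $E$, the interior terms vanish (they vanish on $\partial T\supseteq E$), the terms $\eta_{E',j}^{(T)}$ with $E'\neq E$ vanish on $E$, and the vertex term for the vertex of $T$ opposite to $E$ vanishes on $E$; hence $q|_E=\sum_{z\in\VV_E}\alpha_z\,\eta_z^{(T)}|_E+\sum_{j=1}^{N_p}\alpha_{E,j}\,\eta_{E,j}^{(T)}|_E$. Doing the same with the second representation and invoking trace compatibility, both right-hand sides are expansions of $q|_E$ in the \emph{same} basis of $P^p(E)$; uniqueness of coordinates then yields $\alpha_z=\alpha_z'$ (consistent with the vertex claim) and $\alpha_{E,j}=\alpha_{E,j}'$ for $j=1,\dots,N_p$.

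The only genuinely delicate point is the last basis property above — compatibility of the edge traces of the $\eta^{(\Tref)}$ between neighboring elements and the fact that these traces, together with the relevant vertex traces, span $P^p(E)$; these hold for the standard hierarchical construction (cf.~\cite{HighOrderFEM}) and are the implicit requirements on the basis that make $P_c^p(\TT)$ well defined. Once they are granted, both claims follow from the elementary evaluations above, and nothing else in the argument goes beyond bookkeeping.
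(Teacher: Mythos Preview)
Your proof is correct and follows essentially the same approach as the paper: evaluate the polynomial at the shared vertex for the first claim, and restrict to the shared edge for the second. The only cosmetic difference is that the paper first cancels the vertex contributions on $E$ using the already-established equality $\alpha_z=\alpha_z'$ and then concludes from linear independence of the edge traces, whereas you invoke uniqueness of coordinates in the full basis of $P^p(E)$ directly; both are equivalent.
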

\begin{proof}
  Suppose that $z\in \VV_T\cap \VV_{T'}$. Then, $q(z) = \alpha_z = \alpha_z'$ since all other basis functions vanish at vertex $z$.

  Suppose that $E\in \EE_T\cap \EE_{T'}$. Note that $\Lambda^{(p)}_{T,S}(\eta_{E',j}|_T)|_{E}=0=\Lambda^{(p)}_{T',S}(\eta_{E',j}|_{T'})|_E$ for all edges $E'\neq E$.
  By taking the restriction of $q$ onto $E$ it follows that
  \begin{align*}
    \sum_{z\in\VV_E} \alpha_z\Lambda^{(p)}_{T,S}(\eta_z|_{T})|_E  + \sum_{j=1}^{N_p} \alpha_{E,j} \Lambda^{(p)}_{T,S}(\eta_{E,j}|_T)|_E 
    &= \sum_{z\in\VV_E}\alpha_z'\Lambda^{(p)}_{T',S}(\eta_z|_{T'})|_E + \sum_{j=1}^{N_p} \alpha_{E,j}' \Lambda^{(p)}_{T',S}(\eta_{E,j}|_{T'})|_E
  \end{align*}
  and, since $\alpha_z = \alpha_z'$ and $\Lambda^{(p)}_{T,S}(\eta_z|_T)|_E=\Lambda^{(p)}_{T',S}(\eta_z|_{T'})|_E$ for $z\in \VV_E$, we conclude
  \begin{align*}
    \sum_{j=1}^{N_p} \alpha_{E,j} \Lambda^{(p)}_{T,S}(\eta_{E,j}|_T)|_E 
    = \sum_{j=1}^{N_p} \alpha_{E,j}' \Lambda^{(p)}_{T',S}(\eta_{E,j}|_{T'})|_E.
  \end{align*}
  Furthermore, $\Lambda^{(p)}_{T,S}(\eta_{E,j}|_T)|_E = \eta_{E,j}|_E = \Lambda^{(p)}_{T',S}(\eta_{E,j}|_{T'})|_E$. Thus, we conclude $\alpha_{E,j}=\alpha_{E,j}'$ for all $j=1,\dots,N_p$.
\end{proof}

The construction of the quasi-interpolator is based on the following result. Its proof is given for $d=1$. We provide the details for the case $d=2$ in Appendix~\ref{sec:proof2d}, following the approach outlined in~\cite{KoornwinderSauter15}, but with some minor modifications.

\begin{lemma}\label{lem:fullrank}
Let $z\in\VV_0$ and $p\in\N_0$ be given. Consider the $L^2$-orthogonal projection
\begin{align*}
  \Pi_{\patch_z}^p \colon L^2(\Patch_z) \mapsto P^p(\patch_z). 
\end{align*}
Then, $\Pi_{\patch_z}^p|_{P^{p+1}(\Patch_z)}$ has full rank, i.e., $\ker(\Pi_{\patch_z}^p|_{P^{p+1}(\Patch_z)}) = \{\nulo\}$.
\end{lemma}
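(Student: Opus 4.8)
The plan is to reduce the global orthogonality that defines the kernel to an elementwise condition on each element of the patch, and then, for $d=1$ (the case treated in the main text), to combine the zero-distribution of univariate orthogonal polynomials with the disjointness of the two elements meeting at $z$.

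\emph{Step 1 (elementwise reduction).} I would start from $q\in P^{p+1}(\Patch_z)$ in the kernel, i.e.\ $\ip{q}{r}_{\Patch_z}=0$ for all $r\in P^p(\patch_z)$. Since, for any $T\in\patch_z$ and any $s\in P^p(T)$, the function equal to $s$ on $T$ and to $0$ on the remaining elements of $\patch_z$ lies in $P^p(\patch_z)$, this is equivalent to $\ip{q|_T}{s}_T=0$ for all $s\in P^p(T)$ and all $T\in\patch_z$; that is, the $L^2(T)$-orthogonal projection of $q|_T$ onto $P^p(T)$ vanishes for every $T\in\patch_z$. Choosing $s=q|_T$ shows that if $\deg q\le p$ then $q|_T=0$ and hence $q\equiv 0$, so from now on one may assume $\deg q=p+1$.

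\emph{Step 2 ($d=1$).} Because $z\in\VV_0$, one has $\patch_z=\{T_1,T_2\}$ with $\overline{T_1}\cap\overline{T_2}=\{z\}$. Fix $i$ and suppose $q|_{T_i}\not\equiv 0$. Let $\xi_1<\dots<\xi_m$ be the points of the open interval $T_i$ at which $q$ changes sign, and set $w:=\prod_{j=1}^m(\cdot-\xi_j)$, of degree $m$. Then $qw$ has only zeros of even order inside $T_i$, hence does not change sign there, so $\ip{q}{w}_{T_i}\ne 0$. If $m\le p$ this contradicts Step~1 (as $w\in P^p(T_i)$), so $m\ge p+1$; together with $m\le\deg q=p+1$ this gives $m=p+1$, i.e.\ all $p+1$ zeros of $q$ are simple and lie in the open interval $T_i$. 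Applying this for both $i=1$ and $i=2$ would place all $p+1$ zeros of $q$ simultaneously in $T_1$ and in $T_2$, which is impossible since these intervals have disjoint interiors. Hence $q\equiv 0$, which proves $\ker(\Pi_{\patch_z}^p|_{P^{p+1}(\Patch_z)})=\{\nulo\}$ for $d=1$.

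\emph{Step 3 ($d=2$, where the real difficulty lies).} The elementwise condition of Step~1 now says that $q|_T$ belongs, for every triangle $T$ of the fan around $z$, to the $(p+2)$-dimensional space of degree-$(p+1)$ polynomials on $T$ that are $L^2(T)$-orthogonal to $P^p(T)$. The main obstacle is precisely that this orthogonal complement is no longer one-dimensional, so the elementary root-counting of Step~2 breaks down; instead one must exploit the specific geometry of the vertex patch together with properties of orthogonal polynomials on triangles, adapting the argument of~\cite{KoornwinderSauter15} from the weighted inner product to the plain $L^2$ inner product. I would carry this out separately, as is done in Appendix~\ref{sec:proof2d}.
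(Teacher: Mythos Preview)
Your proof is correct and follows essentially the same route as the paper. The only cosmetic difference is that the paper names the elementwise orthogonal polynomial explicitly (observing that $q|_T$ must be a scalar multiple of the degree-$(p+1)$ Legendre polynomial on $T$, hence has $p+1$ simple roots in $T$), whereas you derive that root count directly via the standard sign-change argument; both then finish by the same root-counting contradiction across the two subintervals, and both defer the $d=2$ case to the appendix.
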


\begin{proof}
Suppose that $d=1$.
Let $q\in\ker(\Pi_{\patch_z}^p|_{P^{p+1}(\Patch_z)})$, thus $q\in P^{p+1}(\Patch_z)$ and 
\begin{equation*}
  \int_{T} q(x) \,v(x)\, \di x = 0, \quad \forall v\in P^{p}(T), \forall T\in \patch_{z}.
\end{equation*}
It follows that $q\in \linhull\{ \ell_{p+1}(T)\} $, for all $T\in \patch_{z}$, 
where $\ell_{p+1}(T)$ is the Legendre polynomial of degree $p+1$ on $T$. 
Then, $q$ has at least $p+1$ distinct roots in $T$, for each $T\in \patch_z$. 
Since $\#\patch_z=2$, then $q\in P^{p+1}(\Patch_z)$ has at least $2(p+1)$ distinct roots and we conclude $q=\nulo$.
\end{proof}

\subsection{Definition {with vanishing traces}}\label{sec:def}
To define the quasi-interpolator we first need to introduce the weight functions for the vertex, edges, and element interior nodes.

\begin{subequations}\label{eq:defphi}
  \noindent \textit{Vertex weight function.} For any $z\in\VV_0$, fix some $T_z\in \patch_z$. Define the extended basis function  $q_{\star,z} := \Lambda_{T_{z}, \Patch_z}^{p+1}(\eta_{\star}|_{T_z})$, for a given index  $\star \in \II_{p+1, T_{z}}$. Let  $\phi_z\in P^p(\patch_z)$ be such that
\begin{align}\begin{split}\label{eq:defphi:z}
    \ip{\phi_z}{q_{\star,z}}_{\Patch_z} &= \delta_{z,\star} \quad\forall \star\in\II_{p+1,T_z}, \\
    \norm{\phi_z}{L^\infty(\Patch_z)}&\lesssim \frac{1}{|\Patch_z|}.
\end{split}
\end{align}
\textit{Edge weight function.} For any $E\in\EE_0$, choose $z_E\in\VV_0$ and $r_E\in\N$ such that \( E\subset \Patch^{(r_E)}(\{z_E\}).\) Set $\patch_E = \patch^{(r_E)}(\{z_E\})$ with domain $\Patch_E:=\Patch^{(r_E)}(\{z_E\})$. Moreover, fix some $T_E\in\patch_{E}$ with $E\in\EE_{T_E}$. Define the extended basis function $q_{\star, E}:= \Lambda_{T_E, \Patch_E}^{(p+1)}(\eta_{\star}|_{T_E})$, for $\star \in \II_{p+1, T_E}$. Let $\phi_{E,j}\in P^p(\patch_E)$, for $j=1,\dots,N_{p+1}$, be such that 
\begin{align}\begin{split}\label{eq:defphi:Ej}
  \ip{\phi_{E,j}}{q_{\star,E}}_{\Patch_E} &= \delta_{\star,(E,j)} \quad\forall \star\in \II_{p+1,T_E}, \\
  \norm{\phi_{E,j}}{L^\infty(\Patch_E)}&\lesssim \frac{1}{|\Patch_E|}.
\end{split}
\end{align}
\textit{Element weight function.} For any $T\in\TT$, choose $z_T\in\VV_0$ and $r_T\in\N$ such that \(T\subset\Patch^{(r_T)}(\{z_T\}).\) Set  $\patch_T = \patch^{(r_T)}(\{z_T\})$ with domain $\Patch_T$. Define the extended basis function $q_{\star,T} := \Lambda_{T, \Patch_T}^{p+1}(\eta_{\star}|_T)$ for $\star\in \II_{p+1, T}$. Let $\phi_{T,k}\in P^p(\patch_{T})$, $k=1,\dots,M_{p+1}$ be such that
\begin{align}\begin{split}\label{eq:defphi:Tk}
    \ip{\phi_{T,k}}{q_{\star,T}}_{\Patch_T} &= \delta_{\star,(T,k)} \quad\forall \star\in \II_{p+1,T}, \\
    \norm{\phi_{T,k}}{L^\infty(\Patch_T)}&\lesssim \frac{1}{|\Patch_T|}.
\end{split}
\end{align}

The existence of functions is guaranteed by Lemma~\ref{lem:fullrank}. 
However, we stress that $\phi_\star$ is not uniquely determined by the relations above. 
In Section~\ref{sec:constr} we discuss one possibility to compute the weights $\phi_\star$ by solving local problems in more detail.

We extend all $\phi_\star$, $\star\in\II_{p+1,0}$, by zero onto the whole domain $\Omega$, which implies that
\begin{align}
  \phi_\star\in P^p(\TT).
\end{align}
\end{subequations}

\noindent \textit{The quasi-interpolation operator.} We are now in a position to define our quasi-interpolator. Suppose that $\phi_\star$, $\star\in\II_{p+1,0}$, satisfy~\eqref{eq:defphi}.
For any $v\in L^1(\Omega)$, define $\qintpz v\in P_{c,0}^{p+1}(\TT)$ by
\begin{align}\label{def:quasiint}\begin{split}
  \qintpz v &= \sum_{\star\in\II_{p+1,0}} \ip{v}{\phi_\star}_\Omega \eta_\star 
  \\
  &= \sum_{z\in\VV_0} \ip{v}{\phi_z}_\Omega\eta_z + \sum_{E\in\EE_0}\sum_{j=1}^{N_{p+1}}\ip{v}{\phi_{E,j}}_\Omega\eta_{E,j} 
  + \sum_{T\in\TT}\sum_{k=1}^{M_{p+1}}\ip{v}{\phi_{T,k}}_\Omega\eta_{T,k}.
\end{split}
\end{align}
Next, we state the main result of this paper.
\begin{theorem}\label{thm:qintpz}
  Operator $\qintpz\colon L^2(\Omega)\to P^{p+1}_{c,0}(\TT)$ from~\eqref{def:quasiint} is well defined and satisfies the following properties
  \begin{enumerate}[label={\upshape(\roman*)}, align=right, widest=iii]
    \item $\qintpz = \qintpz\Pip$. 
    \item $\qintpz q|_T = q|_T$ for all $q\in P^{p+1}(\Patch^{(R)}(T))$ with $q|_{\partial\Omega\cap \partial\Patch^{(R)}(T)}=0$, $T\in\TT$.
    \item $\norm{\qintpz v}{T} \lesssim \norm{v}{\Patch^{(R)}(T)}$ for $v\in L^2(\Omega)$, $T\in\TT$.
    \item $\norm{\nabla\qintpz v}{T} \lesssim \norm{\nabla v}{\Patch^{(R)}(T)}$ for $v\in H_0^1(\Omega)$, $T\in\TT$.
    \item $\norm{(1-\qintpz)v}{T} \lesssim h_T^m\norm{D^m v}{\Patch^{(R)}(T)}$ for $v\in H^m(\Omega)\cap H_0^1(\Omega)$, $T\in\TT$, $m=1,\cdots,p+2$.
  \end{enumerate}
  The constants obtained in the inequalities depend only on the shape-regularity, $R$, and $p\in\N_0$.
\end{theorem}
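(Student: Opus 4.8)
The plan is to reduce everything to elementwise estimates. Well-definedness is immediate: each $\phi_\star\in P^p(\TT)\subset L^2(\Omega)$, so $v\mapsto\ip{v}{\phi_\star}_\Omega$ is a bounded functional on $L^2(\Omega)$, the functions $\{\eta_\star:\star\in\II_{p+1,0}\}$ form a basis of $P^{p+1}_{c,0}(\TT)$, and the existence of the weights $\phi_\star$ is guaranteed by Lemma~\ref{lem:fullrank}. Property~(i) is a one-line computation: since $\phi_\star\in P^p(\TT)$ and $\Pip$ is the $L^2$-orthogonal projection onto $P^p(\TT)$, one has $\ip{v}{\phi_\star}_\Omega=\ip{\Pip v}{\phi_\star}_\Omega$ for every $\star$, hence $\qintpz v=\qintpz\Pip v$. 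From now on fix $T\in\TT$. The basic bookkeeping fact I would record first is that $\eta_\star|_T=0$ for $\star\in\II_{p+1,0}\setminus\II_{p+1,T}$, and that for $\star\in\II_{p+1,0}\cap\II_{p+1,T}$ the patch $\Patch_\star$ associated to $\phi_\star$ in~\eqref{eq:defphi} satisfies $T_\star\subseteq\Patch_\star\subseteq\Patch^{(R)}(T)$ and $\abs{\Patch_\star}\eqsim\abs{T}$ (a consequence of $r_\star\le R$ from Assumption~\ref{ass:vertices} and shape-regularity), where $T_\star$ is the reference element used in~\eqref{eq:defphi}, which shares with $T$ the vertex resp.\ edge carrying $\star$. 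Thus $\qintpz v|_T=\sum_{\star\in\II_{p+1,0}\cap\II_{p+1,T}}\ip{v}{\phi_\star}_{\Patch_\star}\,\eta_\star|_T$ is a sum of a bounded number of terms.

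The core is~(ii). Let $q\in P^{p+1}(\Patch^{(R)}(T))$ vanish on $\partial\Omega\cap\partial\Patch^{(R)}(T)$, and write $q|_T=\sum_{\star\in\II_{p+1,T}}\beta_\star^{(T)}\eta_\star^{(T)}$. Fix $\star\in\II_{p+1,0}\cap\II_{p+1,T}$. Since $q$ restricted to $\Patch_\star$ is a single polynomial, it equals $\Lambda^{(p+1)}_{T_\star,\Patch_\star}(q|_{T_\star})=\sum_{\bullet\in\II_{p+1,T_\star}}\beta_\bullet^{(T_\star)}q_{\bullet,\star}$ with $q_{\bullet,\star}$ as in~\eqref{eq:defphi}; testing with $\phi_\star$ and using the biorthogonality relations in~\eqref{eq:defphi}, $\ip{q}{\phi_\star}_\Omega=\ip{q}{\phi_\star}_{\Patch_\star}=\beta_\star^{(T_\star)}$. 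For a vertex index this is the nodal value $q(z)$ and hence element-independent; for an edge index $(E,j)$, Lemma~\ref{lem:independence}, applied on $\Patch^{(R)}(T)$ to the elements $T_E$ and $T$ sharing $E$, gives $\beta_{E,j}^{(T_E)}=\beta_{E,j}^{(T)}$; for an element index the reference element is $T$ itself. Therefore $\qintpz q|_T=\sum_{\star\in\II_{p+1,0}\cap\II_{p+1,T}}\beta_\star^{(T)}\eta_\star^{(T)}$, which differs from $q|_T$ only through the indices in $\II_{p+1,T}\setminus\II_{p+1,0}$, i.e.\ boundary vertices and boundary edges of $T$. Each such vertex or edge is contained in $\partial\Omega\cap\partial\Patch^{(R)}(T)$, where $q$ vanishes; since the corresponding nodal values are zero and the edge coefficients are determined by $q|_E$ through the linear independence of the edge-bubble traces $\{\eta_{E,j}^{(T)}|_E\}_j$, all these coefficients vanish, so $\qintpz q|_T=q|_T$.

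Properties~(iii) and~(iv) are then routine. For~(iii), Cauchy--Schwarz and the $L^\infty$-bounds in~\eqref{eq:defphi} give $|\ip{v}{\phi_\star}_{\Patch_\star}|\le\norm{v}{\Patch_\star}\norm{\phi_\star}{\Patch_\star}\lesssim\abs{\Patch_\star}^{-1/2}\norm{v}{\Patch_\star}$, while $\norm{\eta_\star}{T}\lesssim\abs{T}^{1/2}$ by the normalization of the basis; combined with $\abs{\Patch_\star}\eqsim\abs{T}$, $\Patch_\star\subseteq\Patch^{(R)}(T)$, and the bounded number of terms this yields $\norm{\qintpz v}{T}\lesssim\norm{v}{\Patch^{(R)}(T)}$. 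For~(iv), with $v\in H_0^1(\Omega)$ and any constant $c$, the inverse estimate gives $\norm{\nabla\qintpz v}{T}=\norm{\nabla(\qintpz v-c)}{T}\lesssim h_T^{-1}\norm{\qintpz v-c}{T}$. If $\partial\Omega\cap\partial\Patch^{(R)}(T)=\emptyset$, choose $c$ as the mean of $v$ over $\Patch^{(R)}(T)$: by~(ii) applied to the constant $c$ one has $\qintpz c|_T=c$, so $\qintpz v-c|_T=\qintpz(v-c)|_T$, and~(iii) plus a Poincar\'e inequality give $\norm{\nabla\qintpz v}{T}\lesssim h_T^{-1}\norm{v-c}{\Patch^{(R)}(T)}\lesssim\norm{\nabla v}{\Patch^{(R)}(T)}$. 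Otherwise choose $c=0$ and follow~(iii) by a scaled Friedrichs inequality (admissible since $v\in H_0^1(\Omega)$ and $\partial\Omega\cap\partial\Patch^{(R)}(T)$ has $(d-1)$-dimensional measure $\gtrsim h_T^{d-1}$, the mesh resolving $\partial\Omega$): $\norm{\nabla\qintpz v}{T}\lesssim h_T^{-1}\norm{v}{\Patch^{(R)}(T)}\lesssim\norm{\nabla v}{\Patch^{(R)}(T)}$.

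Finally~(v) is a Bramble--Hilbert argument resting on~(ii) and~(iii). For $1\le m\le p+2$ take $q\in P^{m-1}(\Patch^{(R)}(T))\subseteq P^{p+1}(\Patch^{(R)}(T))$. If $\partial\Omega\cap\partial\Patch^{(R)}(T)=\emptyset$ the constraint in~(ii) is vacuous, so $\qintpz q|_T=q|_T$ and $\norm{(1-\qintpz)v}{T}=\norm{(1-\qintpz)(v-q)}{T}\lesssim\norm{v-q}{\Patch^{(R)}(T)}$ by~(iii); minimizing over $q$ and applying the Bramble--Hilbert lemma gives $\norm{(1-\qintpz)v}{T}\lesssim h_T^m\norm{D^m v}{\Patch^{(R)}(T)}$. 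If $\Patch^{(R)}(T)$ meets $\partial\Omega$, one instead needs $q$ in $\{q\in P^{p+1}(\Patch^{(R)}(T)):q|_{\partial\Omega\cap\partial\Patch^{(R)}(T)}=0\}$; since $v\in H_0^1(\Omega)$ vanishes on that boundary portion, the same rate follows from a polynomial approximation estimate with homogeneous data (for $m=1$, simply $q=0$ with Friedrichs; for larger $m$, correct an averaged Taylor polynomial by a uniformly bounded polynomial lifting of its — necessarily small — trace on $\partial\Omega\cap\partial\Patch^{(R)}(T)$). Summing over $T\in\TT$ yields the global estimates, with all constants depending only on shape-regularity, $R$, and $p$. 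I expect the real obstacle to be~(ii) — fixing the index bookkeeping and seeing that the homogeneous boundary condition on $q$ annihilates exactly the degrees of freedom outside $\II_{p+1,0}$ — together with making the two near-boundary arguments in~(iv) and~(v) (scaled Friedrichs, constrained polynomial approximation) uniform with respect to the mesh.
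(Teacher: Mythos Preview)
Your proposal is correct and follows essentially the same approach as the paper: property~(i) from $\phi_\star\in P^p(\TT)$, property~(ii) via the biorthogonality relations~\eqref{eq:defphi} combined with Lemma~\ref{lem:independence}, property~(iii) from the $L^\infty$ bounds on $\phi_\star$ and scaling, then~(iv) and~(v) by subtracting a polynomial and invoking an inverse estimate resp.\ Bramble--Hilbert. If anything you are more explicit than the paper about the near-boundary case in~(v), where the paper simply invokes Bramble--Hilbert without spelling out the constrained approximation you describe.
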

\begin{proof}
  We first observe that the weight functions are elements of $L^\infty(\Omega)$, and therefore $\qintpz$ is well defined for any $v\in L^1(\Omega)\supseteq L^2(\Omega)$.

  The identity \text{(i)}, $\qintpz = \qintpz\Pip$, follows from the fact that the weight functions are elements of $P^p(\TT)$, i.e., for $v\in L^2(\Omega)$ we have that
\begin{align*}
    \qintpz v &= \sum_{\star\in\II_{p+1,0}} \ip{v}{\phi_\star}_\Omega\eta_\star = \sum_{\star\in\II_{p+1,0}} \ip{\Pip v}{\phi_\star}_\Omega\eta_\star = \qintpz\Pip v.
  \end{align*}
  Next, we show \text{(iii)}, $\norm{\qintpz v}{T} \lesssim \norm{v}{S}$, where $S=\Patch^{(R)}(T)$ for some $T\in\TT$.
  Observe that $|S| \eqsim h_T^d$ with constants depending on shape-regularity and $R$.
  From that, the properties of the weight functions and standard scaling arguments, we conclude that $\norm{\phi_\star}{S}\norm{\eta_\star}{S} \lesssim 1$.
  Together with the triangle inequality and the Cauchy--Schwarz inequality we obtain 
  \begin{align*}
    \norm{\qintpz v}{T} &\leq \sum_{\star\in\II_{p+1,T}\cap\II_{p+1,0}} |\ip{v}{\phi_\star}_\Omega|\norm{\eta_\star}T \leq \sum_{\star\in\II_{p+1,T}\cap\II_{p+1,0}} \norm{v}{S} \norm{\phi_\star}S \norm{\eta_\star}S \lesssim \norm{v}S.
  \end{align*}
  In the next step, we prove \text{(ii)}, that is, $\qintpz$ locally preserves polynomials of order $p+1$. 
  Note that $q$ has the representation 
  \begin{align*}
    q = \sum_{z\in\VV_T\cap\VV_0} \alpha_z \Lambda_{T,S}^{(p+1)}(\eta_z|_T) + \sum_{E\in\EE_T\cap\EE_0} \sum_{j=1}^{N_{p+1}} \alpha_{E,j}\Lambda_{T,S}^{(p+1)}(\eta_{E,j}|_T) + \sum_{k=1}^{M_{p+1}} \alpha_{T,k}\Lambda_{T,S}^{(p+1)}(\eta_{T,k}|_T).
  \end{align*}
  Let $z\in \VV_T\cap\VV_0$ and write
  \begin{align*}
    q|_{\Omega_z} = \sum_{\star\in\II_{p+1,T}\cap\II_{p+1,0}} \alpha_\star' q_{\star,z}.
  \end{align*}
  By Lemma~\ref{lem:independence} we have $\alpha_z = \alpha_z'$ and~\eqref{eq:defphi:z} implies $\ip{q}{\phi_z}_\Omega = \ip{q|_{\Omega_z}}{\phi_z}_{\Omega} = \alpha_z'=\alpha_z$.
  Let $E\in\EE_T\cap\EE_0$ be given and write 
  \begin{align*}
    q|_{\Omega_E} = \sum_{\star\in\II_{p+1,T}\cap\II_{p+1,0}} \alpha_\star' q_{\star,E}.
  \end{align*}
  Since $E\in\EE_{T}\cap\EE_{T_E}$ we have that $\alpha_{E,j}=\alpha_{E,j}'$ for $j=1,\ldots,N_{p+1}$ by Lemma~\ref{lem:independence}.
  Then, by construction~\eqref{eq:defphi} we have that $\alpha_{E,j}' = \ip{q}{\phi_{E,j}}_\Omega$. We conclude that $\alpha_{E,j} = \ip{q}{\phi_{E,j}}_\Omega$ for all $E\in\EE_0$, $j=1,\ldots,N_{p+1}$.
  Finally, given $k$ we write 
  \begin{align*}
    q|_{\Omega_T} = \sum_{\star\in\II_{p+1,T}\cap\II_{p+1,0}} \alpha_\star' q_{\star,T}
  \end{align*}
  and employing~\eqref{eq:defphi} yields $\alpha_{T,k} =\alpha_{T,k}' = \ip{q}{\phi_{T,k}}_\Omega$. 
  Overall, we have shown that 
  \begin{align*}
    \qintpz q|_T = \sum_{\star\in\II_{p+1,T}\cap\II_{p+1,0}} \ip{\phi_\star}{q}_\Omega \eta_\star|_T = \sum_{\star\in\II_{p+1,T}\cap\II_{p+1,0}} \alpha_\star \eta_\star|_T = q|_T
  \end{align*}
  for all $q\in P^{p+1}(S)$ with $q|_{\partial S\cap \partial\Omega} = 0$.

  We prove the approximation property \text{(v)} next. To that end, let $m\in\{1,\dots,p+2\}$ and $q\in P^{m-1}(S)$. Then, by the polynomial preserving property and local boundedness, we have that
  \begin{align*}
    \norm{(1-\qintpz)v}T &= \norm{(1-\qintpz)(v-q)}T \lesssim \norm{v-q}{S} \lesssim h_T^m\norm{D^m v}{S}
  \end{align*}
  where the last estimate follows from the well-known Bramble--Hilbert lemma and $h_T\eqsim \diam(S)$. 

  It remains to show property \text{(iv)} $\norm{\nabla \qintpz v}{T} \lesssim \norm{\nabla v}{S}$. Let $q\in P^0(S)$ (with $q|_{\partial S\cap \partial\Omega} = 0$ if the surface measure of $\partial S\cap \partial\Omega$ is positive) be given.
  Then with $\nabla q = 0$, the polynomial preserving property, an inverse estimate, and local boundedness we conclude
  \begin{align*}
    \norm{\nabla\qintpz v}{T} = \norm{\nabla \qintpz(v-q)}T \lesssim h_T^{-1}\norm{\qintpz(v-q)}T \lesssim h_T^{-1} \norm{v-q}{S} \lesssim \norm{\nabla v}{S}.
  \end{align*}
  The last estimate follows as for the approximation property.
  This finishes the proof.
\end{proof}

\subsection{Definition {without boundary constraints}}\label{sec:def:general}
We now extend the definition of the quasi-interpolator to cases without vanishing boundary values. As in the previous section we begin by introducing the weight functions.

\noindent 
\textit{Vertex weight function.} For any $z\in\VV$ fix some $T_z\in \patch(\{z\})$ and $v_z\in\VV_0$, $r_z\in\N$ with \(T_z\subset \Patch_z:=\Patch^{(r_z)}(\{v_z\})\). Note that for $z\in\VV_0$ we simply choose $v_z = z$, $r_z=1$ as in Section~\ref{sec:def}. We define the extended basis function considering $\patch^{(r_z)}(\{z\})=:\patch_z$, i.e. $q_{\star, z}:=\Lambda_{T_z, \Patch_z}^{(p+1)}(\eta_{\star}|_{T_z})$. Define then the weight function $\phi_{z}\in P^{p}(\patch_z)$ by \eqref{eq:defphi:z}.

\noindent
\textit{Edge weight function.}
For any $E\in\EE$ fix some $T_E\in\TT$ with $E\in\EE_{T_E}$ and choose $z_E\in\VV_0$ and $r_E\in\N$ such that 
\( T_E\subset \Patch^{(r_E)}(\{z_E\})=:\Patch_E\). The weight function $\phi_{E,j}$ is then defined as in \eqref{eq:defphi:Ej} with $\patch_E := \patch^{(r_E)}(\{z_E\})$.

\noindent
\textit{Element weight function.}
For any $T\in\TT$ choose $z_T\in\VV_0$ and $r_T\in\N$ such that
\(T\subset\Patch^{(r_T)}(\{z_T\})=:\Patch_T\). The weight function $\phi_{T,k}$ is then defined as in \eqref{eq:defphi:Tk} with $\patch_T := \patch^{(r_T)}(\{z_T\})$.

The quasi-interpolator $\qintp$ maps a function  $v\in L^1(\Omega)$ to
\begin{align}\label{def:quasiintp}\begin{split}
    \qintp v &= \sum_{\star\in\II_{p+1}} \ip{v}{\phi_\star}_\Omega\eta_\star 
  \\ 
  &=\sum_{z\in\VV} \ip{v}{\phi_z}_\Omega\eta_z + \sum_{E\in\EE}\sum_{j=1}^{N_{p+1}}\ip{v}{\phi_{E,j}}_\Omega\eta_{E,j} 
  + \sum_{T\in\TT}\sum_{k=1}^{M_{p+1}}\ip{v}{\phi_{T,k}}_\Omega\eta_{T,k}.
\end{split}
\end{align}
The proof of the next result follows along the lines of the proof of Theorem~\ref{thm:qintpz} with obvious modifications. Therefore, we omit details.
\begin{theorem}\label{thm:qintp}
  Operator $\qintp\colon L^2(\Omega)\to P^{p+1}_{c}(\TT)$ from~\eqref{def:quasiintp} is well defined and satisfies
  \begin{enumerate}[label={\upshape(\roman*)}, align=right, widest=iii]
    \item $\qintp = \qintp\Pip$, 
    \item $\qintp q|_T = q|_T$ for all $q\in P^{p+1}(\Patch^{(R)}(T))$, $T\in\TT$,
    \item $\norm{D^\ell\qintp v}{T} \lesssim \norm{D^\ell v}{\Patch^{(R)}(T)}$ for $v\in H^\ell(\Omega)$, $T\in\TT$, $\ell=0,1$, 
    \item $\norm{(1-\qintp)v}{T} \lesssim h_T^m\norm{D^m v}{\Patch^{(R)}(T)}$ for $v\in H^m(\Omega)$, $T\in\TT$, $m=1,\cdots,p+2$.
  \end{enumerate}
  The constants obtained in the inequalities depend only on the shape-regularity, $R$, and $p\in\N_0$.
  \qed
\end{theorem}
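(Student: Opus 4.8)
The plan is to follow the proof of Theorem~\ref{thm:qintpz} almost verbatim, replacing the index set $\II_{p+1,0}$ by $\II_{p+1}$ throughout and discarding the boundary hypotheses; somewhat pleasantly, two of the steps then become shorter, because $\qintp$ now reproduces all of $P^{p+1}$ near $\partial\Omega$ as well. Well-definedness and property~(i) are identical: each $\phi_\star$ lies in $L^\infty(\Omega)$ by the construction \eqref{eq:defphi:z}--\eqref{eq:defphi:Tk}, so $\qintp$ is defined on $L^1(\Omega)\supseteq L^2(\Omega)$, and since every $\phi_\star\in P^p(\TT)$ we have $\ip{v}{\phi_\star}_\Omega=\ip{\Pip v}{\phi_\star}_\Omega$, hence $\qintp=\qintp\Pip$.

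For the $L^2$-stability in property~(iii) with $\ell=0$, I would fix $T\in\TT$ and set $S:=\Patch^{(R)}(T)$. Only the degrees of freedom $\star\in\II_{p+1,T}$ contribute to $\qintp v|_T$, and the patch supporting each such $\phi_\star$ is contained in $S$ (it is built from at most $R$ levels of element patches around an interior vertex and meets $\overline T$), with measure $\eqsim h_T^d\eqsim|S|$. The $L^\infty$-bounds in \eqref{eq:defphi:z}--\eqref{eq:defphi:Tk} and standard scaling give $\norm{\phi_\star}{S}\norm{\eta_\star}{S}\lesssim 1$, so the triangle and Cauchy--Schwarz inequalities yield $\norm{\qintp v}{T}\le\sum_{\star\in\II_{p+1,T}}\norm{v}{S}\norm{\phi_\star}{S}\norm{\eta_\star}{S}\lesssim\norm{v}{S}$, exactly as in Theorem~\ref{thm:qintpz}.

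The core step is the polynomial reproduction~(ii). Given $T$ and $q\in P^{p+1}(S)$ with $S=\Patch^{(R)}(T)$, expand $q|_T=\sum_{\star\in\II_{p+1,T}}\alpha_\star\,\eta_\star|_T$ in the local basis, and for each $\star\in\II_{p+1,T}$ expand $q$, restricted to the patch supporting $\phi_\star$, in the extended polynomials $\{q_{\star',\star} : \star'\in\II_{p+1,T_\star}\}$ built from the element $T_\star$ of \eqref{eq:defphi:z}--\eqref{eq:defphi:Tk}. Lemma~\ref{lem:independence}, applied to the pair $T,T_\star$ (the matching being trivial when $T_\star=T$, e.g.\ for element degrees of freedom), shows that the component corresponding to the degree of freedom $\star$ agrees in the two expansions and equals $\alpha_\star$; the defining relation \eqref{eq:defphi:z}, \eqref{eq:defphi:Ej} or \eqref{eq:defphi:Tk} then gives $\ip{q}{\phi_\star}_\Omega=\alpha_\star$, and summing over $\star\in\II_{p+1,T}$ yields $\qintp q|_T=q|_T$. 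In contrast with Theorem~\ref{thm:qintpz}, no condition on $q$ along $\partial\Omega$ is required, because every boundary degree of freedom now carries its own weight function.

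Finally, property~(iv) and property~(iii) with $\ell=1$ follow from (ii), (iii) with $\ell=0$, and the usual tools. For (iv), with $m\in\{1,\dots,p+2\}$ take $q\in P^{m-1}(S)\subseteq P^{p+1}(S)$; then $(1-\qintp)v|_T=(1-\qintp)(v-q)|_T$, so $\norm{(1-\qintp)v}{T}\lesssim\norm{v-q}{S}$, and the Bramble--Hilbert lemma, together with $\diam(S)\eqsim h_T$, lets us choose $q$ with $\norm{v-q}{S}\lesssim h_T^m\norm{D^m v}{S}$. For (iii) with $\ell=1$, take $q$ equal to the integral mean of $v$ over $S$, so $\nabla q=0$ and, since $q\in P^{p+1}(S)$, $\nabla\qintp v|_T=\nabla\qintp(v-q)|_T$; an inverse estimate, (iii) with $\ell=0$, and Poincar\'e's inequality on $S$ then give $\norm{\nabla\qintp v}{T}\lesssim h_T^{-1}\norm{v-q}{S}\lesssim\norm{\nabla v}{S}$; unlike in Theorem~\ref{thm:qintpz}, no case distinction on whether $\partial S\cap\partial\Omega$ has positive surface measure is needed, since $\qintp$ reproduces constants globally. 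All implied constants depend only on shape-regularity, $R$, and $p$. I expect the only genuinely delicate point to be the bookkeeping in step~(ii) --- checking that, for every $\star\in\II_{p+1,T}$, both the support of $\phi_\star$ and the element $T_\star$ lie inside $\Patch^{(R)}(T)$, so that Lemma~\ref{lem:independence} applies and $q$ may legitimately be restricted there --- which is where the constant $R$ and the choices of Section~\ref{sec:def:general} enter, and it is routine rather than hard.
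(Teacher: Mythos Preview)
Your proposal is correct and follows exactly the approach the paper intends: the paper omits the proof entirely, stating only that it ``follows along the lines of the proof of Theorem~\ref{thm:qintpz} with obvious modifications,'' and you have correctly identified and carried out precisely those modifications (replacing $\II_{p+1,0}$ by $\II_{p+1}$, dropping the boundary hypothesis in the polynomial-reproduction step, and noting that the case distinction for the $H^1$-stability is no longer needed since constants are reproduced unconditionally).
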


\subsection{Construction of the {quasi-interpolator}}\label{sec:constr}
We describe a possibility to construct weight functions $\phi_\star$ that satisfy~\eqref{eq:defphi}. We focus on the operator $\qintpz$.
With the notations from the previous sections we define $\phi_z\in P^p(\patch_z)$, for $z\in\VV_0$, as the solution to the constrained minimization problem
\begin{align*}
  \min \{ \norm{\phi}{\Patch_z}:\; \phi\in P^p(\patch_z),\, \ip{\phi}{q_{\star,z}}_{\Patch_z} = \delta_{z,\star} \,\,\forall \star\in\II_{p+1,T_z}\}.
\end{align*}
This is a convex optimization problem which is equivalent to the following variational formulation:
Find $\phi_z\in P^p(\patch_z)$, $\lambda\in P^{p+1}(\Patch_z)$ such that
\begin{subequations}\label{eq:constr:mixed}
\begin{alignat}{2}
    &\ip{\phi_z}{\psi}_{\Patch_z}  + \ip{\psi}{\lambda}_{\Patch_z} &\,=\,& 0,\label{eq:constr:mixed:a}\\
    &\ip{\phi_z}{q_{\star,z}}_{\Patch_z}  &\,=\,& \delta_{z,\star}, \label{eq:constr:mixed:b} 
\end{alignat}
for all $\psi\in P^p(\patch_z)$, $\star\in \II_{p+1,T_z}$.
\end{subequations}
By the classic theory on mixed methods~\cite{BoffiBrezziFortin} this system admits a unique solution if 
\begin{align*}
  \ip{\psi}{\lambda}_{\Patch_z}=0 \quad\text{for all } \psi\in P^p(\patch_z) \quad\text{implies that } \lambda=0. 
\end{align*}
This holds true by Lemma~\ref{lem:fullrank}.
Particularly, $\phi_z$ satisfies~\eqref{eq:defphi:z} by construction, i.e.~\eqref{eq:constr:mixed:b}. The estimate in the $L^\infty$ norm follows from a scaling argument.
Furthermore, we note that from~\eqref{eq:constr:mixed:a} we see that $\phi_z$ is in the range of $\Pi_{\patch_z}^p|_{P^{p+1}(\Patch_z)}$.

The construction of the other weight functions, $\phi_{E,j}$ and $\phi_{T,k}$, follows the same ideas and we only present some details. 
Let $E\in\EE_0$ and $j\in\{1,\dots,N_{p+1}\}$ be given and define $\phi_{E,j}$ by the following variational formulation:
Find $\phi_{E,j}\in P^p(\patch_E)$, $\lambda \in P^{p+1}(\Patch_E)$ such that
\begin{subequations}\label{eq:constr:mixed:phiE}
\begin{alignat}{2}
    &\ip{\phi_{E,j}}{\psi}_{\Patch_E}  + \ip{\psi}{\lambda}_{\Patch_E}    &\,=\,& 0,\\
    &\ip{\phi_{E,j}}{q_{\star,(E,j)}}_{\Patch_E} &\,=\,& \delta_{(E,j),\star}, 
\end{alignat}
for all $\psi\in P^p(\patch_E)$, $\star\in \II_{p+1,T_E}$. 
We note that the latter system admits a unique solution by Lemma~\ref{lem:fullrank} and $\phi_{E,j}$ is in the range of $\Pi_{\patch_E}^p|_{P^{p+1}(\Patch_E)}$.
\end{subequations}

Finally, let $T\in\TT$ and $k\in\{1,\dots,M_{p+1}\}$ be given.
Then, $\phi_{T,k}$ is defined as follows:
Find $\phi_{T,k}\in P^p(\patch_T)$, $\lambda \in P^{p+1}(\Patch_T)$ such that
\begin{subequations}\label{eq:constr:mixed:phiT}
\begin{alignat}{2}
    &\ip{\phi_{T,k}}{\psi}_{\Patch_T}  +  \ip{\psi}\lambda_{\Patch_T} &\,=\,& 0,\\
    &\ip{\phi_{T,k}}{q_{\star,(T,k)}}_{\Patch_T} &\,=\,& \delta_{(T,k),\star}, 
\end{alignat}
for all $\psi\in P^p(\patch_T)$, $\star\in \II_{p+1,T}$.
Again we note that the latter system admits a unique solution by Lemma~\ref{lem:fullrank} and $\phi_{T,k}$ is in the range of $\Pi_{\patch_T}^p|_{P^{p+1}(\Patch_T)}$.
\end{subequations}

\subsection{Alternative construction in the lowest-order case}\label{sec:constr:lowest}
In this section, we recall from~\cite{MixedFEMHm1} an alternative approach to construct a quasi-interpolator for the lowest-order case $p=0$.
Let $s_T$ denote the barycenter of $T\in\TT$. 
Given $z\in \VV_0$, let $(\alpha_{z,T})_{T\in\patch_z}$ be such that
\begin{align*}
  \sum_{T\in\patch_z} \alpha_{z,T} s_T = z, \quad \sum_{T\in\patch_z} \alpha_{z,T} = 1, \quad \alpha_{z,T}\geq 0 \quad (T\in\patch_z).
\end{align*}
We note that the coefficients $\alpha_{z,T}$ are not necessarily unique, see, e.g.,~\cite[Example~10]{MixedFEMHm1}, but there always exist coefficients with the aforegoing properties. We define weight functions
\begin{align*}
  \phi_z = \begin{cases} 
    \frac{\alpha_{z,T}}{|T|} & \text{if } T\in\patch_z, \\
    0 & \text{else},
  \end{cases}
\end{align*}
and consider the operator
\begin{align*}
  J_{0}v := \sum_{z\in\VV_0} \ip{v}{\phi_z}\eta_z \quad\text{for } v\in L^1(\Omega).
\end{align*}
The following result is from~\cite[Theorem~11]{MixedFEMHm1}. It also holds for $d\geq 3$.
\begin{theorem}
  Operator $\qintpz = J_{0}\colon L^2(\Omega)\to P^1_{c,0}(\TT)$ satisfies the assertions from Theorem~\ref{thm:qintpz} with $p=0$ and $R=1$.
  \qed
\end{theorem}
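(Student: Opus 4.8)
The plan is to verify each of the assertions (i)--(v) of Theorem~\ref{thm:qintpz} for the concrete operator $J_0$ constructed in~\cite{MixedFEMHm1}, relying on the fact that its weight functions $\phi_z$ have the required structure. First I would check that these $\phi_z$ satisfy the abstract conditions~\eqref{eq:defphi:z}: piecewise constancy gives $\phi_z\in P^0(\patch_z)$; the $L^\infty$ bound $\norm{\phi_z}{L^\infty(\Patch_z)}\lesssim |\Patch_z|^{-1}$ follows from $0\le\alpha_{z,T}\le 1$ together with shape-regularity (so that $|T|\eqsim|\Patch_z|$ for $T\in\patch_z$); and the duality relation $\ip{\phi_z}{q_{\star,z}}_{\Patch_z}=\delta_{z,\star}$ for $\star\in\II_{1,T_z}$ reduces, for $p=0$, to checking that $\ip{\phi_z}{1}_{\Patch_z}=1$ and $\ip{\phi_z}{x_i}_{\Patch_z}$ reproduces the vertex coordinate, i.e. $\sum_{T\in\patch_z}\frac{\alpha_{z,T}}{|T|}\int_T 1 = \sum_T\alpha_{z,T}=1$ and $\sum_{T\in\patch_z}\frac{\alpha_{z,T}}{|T|}\int_T x = \sum_T\alpha_{z,T}s_T = z$, which are exactly the barycentric conditions imposed on $(\alpha_{z,T})$.

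Once the weight functions are shown to satisfy~\eqref{eq:defphi} (with $R=1$, since $\patch_z$ is a first-order patch and every element, edge, and vertex of the mesh touches an interior vertex when $\VV_0\ne\emptyset$ and we use the minimal choices $r_\bullet=1$ noted after Assumption~\ref{ass:vertices}), the operator $J_0$ literally coincides with $\qintpz$ from~\eqref{def:quasiint} specialized to $p=0$ (note $N_1 = M_1 = 0$ for $d=2$ and $M_1=0$ for $d=1$, so only vertex terms survive). Then properties (i)--(v) are immediate consequences of Theorem~\ref{thm:qintpz}. The only subtlety for $d\ge 3$, claimed in the statement, is that Lemma~\ref{lem:fullrank} is not invoked at all in this construction: existence of suitable $(\alpha_{z,T})$ is a purely geometric fact (the vertex $z$ lies in the convex hull of the barycenters of the elements of its patch, which holds in any dimension for a shape-regular simplicial mesh), so the argument is dimension-independent.

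I would organize the write-up as: (a) recall that $\phi_z\in P^0(\patch_z)\subset P^0(\TT)$ after extension by zero; (b) verify the three conditions in~\eqref{eq:defphi:z}; (c) conclude $J_0 = \qintpz|_{p=0}$ and quote Theorem~\ref{thm:qintpz}; (d) remark that, since no orthogonality argument enters, the same reasoning applies for $d\ge 3$ provided the barycenter-convex-hull property holds, which it does.

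The main obstacle I anticipate is purely bookkeeping rather than conceptual: one must confirm that the index set $\II_{1,T_z}$ contributes only the three (for $d=2$) or two (for $d=1$) vertex functionals, that the extended basis functions $q_{\star,z}$ for $\star$ a vertex index are exactly the barycentric coordinate functions of $T_z$ extended affinely to $\Patch_z$, and that testing $\phi_z$ against an affine function on $\Patch_z$ indeed decomposes cell-by-cell as $\sum_{T\in\patch_z}\frac{\alpha_{z,T}}{|T|}\int_T(\cdot)$, so that the first-moment condition on $(\alpha_{z,T})$ is precisely what is needed. Beyond that, one should double-check the claim $R=1$: this uses that every boundary vertex, every edge, and every element is contained in (or on the boundary of) the first-order patch of some interior vertex, which is guaranteed by the reasoning following Assumption~\ref{ass:vertices} under the minimal choice of $z_\bullet$ — though in fully general meshes one might only get $R\le 2$ and would need to remark that the construction of~\cite{MixedFEMHm1} is set up so that $R=1$ still holds, or cite~\cite{MixedFEMHm1} for this.
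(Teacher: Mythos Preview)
The paper does not actually prove this theorem; it merely cites~\cite[Theorem~11]{MixedFEMHm1} and places a \qed. Your proposal instead gives a self-contained argument by showing that the concrete weight functions $\phi_z$ of $J_0$ satisfy the abstract conditions~\eqref{eq:defphi:z}, so that $J_0$ is a particular instance of the general operator~\eqref{def:quasiint} for $p=0$, and then invoking Theorem~\ref{thm:qintpz}. This is correct and in fact more informative than the paper's bare citation, since it exhibits $J_0$ as fitting into the paper's own framework rather than treating it as an unrelated external construction. Your observation that Lemma~\ref{lem:fullrank} is bypassed entirely (the existence of suitable $(\alpha_{z,T})$ being a purely geometric convex-hull fact) correctly explains why the result extends to $d\ge 3$, as the paper also remarks.

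One small clean-up: your discussion of $R=1$ is slightly overcomplicated. Since $N_1=M_1=0$, the index set $\II_{1,0}$ reduces to $\VV_0$, so the only weight functions present are the vertex weights $\phi_z$, $z\in\VV_0$, each supported on the first-order patch $\patch_z$. Hence $R=1$ is automatic and your later worries about edges, elements, and a possible $R\le 2$ do not arise. Your verification of the duality relation is correct; phrased via the barycentric coordinates directly: since each $q_{z',z}$ is affine and $\sum_T\alpha_{z,T}=1$, one has $\ip{\phi_z}{q_{z',z}}_{\Patch_z}=\sum_{T\in\patch_z}\alpha_{z,T}\,q_{z',z}(s_T)=q_{z',z}\bigl(\sum_T\alpha_{z,T}s_T\bigr)=q_{z',z}(z)=\delta_{z,z'}$, which is equivalent to the computation you sketched via the monomials $1,x_i$.
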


\begin{remark}
  Note that $J_{0}$ is positivity preserving, i.e., $J_0 v\geq 0$ if $v\geq 0$ a.e. in $\Omega$. This follows from the fact that by definition $\phi_z\geq 0$ and $\eta_z\geq 0$. 
  \qed
\end{remark}

%%%%%%%%%%%%%%%%%%%%%%%%%%%%%%%%%%%%%%%
%% Operators based on pw constants %%
%%%%%%%%%%%%%%%%%%%%%%%%%%%%%%%%%%%%%%%
\section{Quasi-interpolators based on piecewise constants}\label{sec:qintzpz}
This section defines quasi-interpolators based on piecewise constant weight functions. 
The main idea is to use larger patches to define the weight functions. 
Thus, we require that the meshes contain sufficiently many elements depending on the polynomial degree. The following assumption reflects this restriction.

\begin{assumption}\label{ass:vertices:constant}
  Let $p\in\N_0$. For each interior node $z\in\VV_0$ there exists a vicinity $\tpatch_{z} \subseteq \TT$, with corresponding domain $\tPatch_{z}$, $z\in \tPatch_z$, such that the $L^2$-projection
  \begin{align*}
    \Pi_{\tpatch_{z}}^0\colon L^2(\tPatch_{z}) \to P^0(\tpatch_{z})
  \end{align*}
  satisfies $\ker(\Pi_{\tpatch_{z}}^0|_{P^{p+1}(\tPatch_{z})})=\{\nulo\}$.
\end{assumption}

\subsection{Remarks on Assumption~\ref{ass:vertices:constant}}
For the one-dimensional case, we have the following result. 
\begin{lemma}
  Let $d=1$ and $p\in\N_0$. Suppose that $\#\TT\geq \dim(P^{p+1}) = p+2$.
  For each $z\in\TT$ choose $\tpatch_{z}\subset\TT$ arbitrary with $z\in \tPatch_{z}$ and $\#\tpatch_z=\dim(P^{p+1})=p+2$.
  Then, Assumption~\ref{ass:vertices:constant} is satisfied.
\end{lemma}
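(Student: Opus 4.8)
The goal is to show that, under the stated cardinality hypotheses, the piecewise-constant $L^2$-projection $\Pi_{\tpatch_z}^0$ restricted to $P^{p+1}(\tPatch_z)$ has trivial kernel. The plan is to mimic the one-dimensional argument in the proof of Lemma~\ref{lem:fullrank}: a polynomial in the kernel must be $L^2$-orthogonal to the constants on each element of the patch, hence must have a sign change (a root) in the interior of each element, and counting these roots against the degree forces the polynomial to vanish.

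\textbf{Key steps.} First, let $q\in P^{p+1}(\tPatch_z)$ with $\Pi_{\tpatch_z}^0 q = 0$, so that $\int_T q\,\di x = 0$ for every $T\in\tpatch_z$. Since $\tPatch_z$ is a union of intervals with pairwise disjoint interiors and $q$ is a single polynomial of degree $\leq p+1$ on all of $\tPatch_z$, I would argue that on each $T\in\tpatch_z$ the function $q$ must change sign: if $q$ had constant sign on $T$, then $\int_T q\,\di x=0$ would force $q\equiv 0$ on $T$, hence $q\equiv 0$ on $\tPatch_z$ (a nonzero polynomial cannot vanish on an interval), and we would be done; otherwise $q$ has at least one root in the interior of $T$. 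Second, since the elements $T\in\tpatch_z$ have disjoint interiors and $\#\tpatch_z = p+2$, this produces at least $p+2$ distinct roots of $q$. But $\deg q\leq p+1$, so a nonzero $q$ can have at most $p+1$ roots. Therefore $q\equiv 0$, which is the claim $\ker(\Pi_{\tpatch_z}^0|_{P^{p+1}(\tPatch_z)})=\{0\}$.

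\textbf{Main obstacle.} The argument is essentially elementary once the sign-change observation is in place; the only point requiring a little care is the dichotomy in the first step — ruling out the degenerate case where $q$ vanishes identically on some element, which immediately gives $q\equiv 0$ by analyticity of polynomials on the connected-per-interval domain — and making sure the roots extracted from distinct elements are genuinely distinct (guaranteed because the element interiors are pairwise disjoint). The hypothesis $\#\TT\geq p+2$ is exactly what allows one to select such a $\tpatch_z$ containing $z$ with $\#\tpatch_z = p+2$, and no shape-regularity or further structural assumption on the mesh is needed in $d=1$.
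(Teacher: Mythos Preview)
Your proposal is correct and follows essentially the same root-counting argument as the paper: zero mean on each of the $p+2$ intervals forces an interior root in each, and a polynomial of degree $\leq p+1$ with $p+2$ distinct roots must vanish. Your added dichotomy handling the degenerate case $q|_T\equiv 0$ is a harmless elaboration of a step the paper leaves implicit.
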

\begin{proof}
  From the assumption $\#\TT\geq \dim(P^{p+1}) = p+2$ it is clear that there exists $\tpatch_z$ with the desired properties. 

  Suppose that $q\in P^{p+1}(\tPatch_{z})$ with $\ip{q}{1}_T = 0$ for all $T\in\tpatch_{z}$. Then, $q$ has at least one root in the interior of each $T\in\tpatch_{z}$. Since $q\in P^{p+1}(\tPatch_{z})$ has at least $p+2$ distinct roots we conclude that $q=0$.
\end{proof}

Proof of the validity of Assumption~\ref{ass:vertices:constant}  becomes more complicated for higher-dimensional cases. Nevertheless, the specific case $d=2$ and $p=0$ can be concluded from Lemma~\ref{lem:fullrank} with $\tpatch_z=\patch_z$. For higher-order cases, we present Algorithm~\ref{alg:vicinity} to produce patches $\tpatch_{z}$ as required in Assumption~\ref{ass:vertices:constant}.

\IncMargin{1.5em}
\begin{algorithm}[h]
	\SetKwData{Left}{left}
	\SetKwData{This}{this}
	\SetKwData{Up}{up}
	\SetKwFunction{Union}{Union}
	\SetKwFunction{FindCompress}{FindCompress}
	\SetKwInOut{Input}{Input}
	\SetKwInOut{Output}{Output}
	\Input{$z\in \VV_0$, vicinity $\tpatch_z\subseteq \TT$ of $z$.}
  \Output{Vicinity $\tpatch_z$ satisfying Assumption~\ref{ass:vertices:constant}.}
	\BlankLine

  Set a basis $\phi_k\in P^{p+1}(\tPatch_z)$, $k=1,\dots,n=\dim(P^{p+1}(\tPatch_z))$\;
	Set a basis $\chi_j\in P^0(\tpatch_z)$, $j=1,\dots,m=\dim(P^0(\tpatch_z))$\;
	Assemble the matrix \(\boldsymbol{B}_{jk} = \ip{\phi_k}{\chi_j}_{\tPatch_z}, \quad j=1,\dots,m,\,k=1,\dots,n.\)\;
	\uIf{$\ker(\boldsymbol{B}) = \{0\}$}{ terminate\;}
\Else{redefine $\tpatch_z \longleftarrow \patch(\tPatch_z)$\;}
	\caption{Vicinity algorithm for Assumption~\ref{ass:vertices:constant}.}\label{alg:vicinity}
\end{algorithm}
\DecMargin{1.5em}

Assuming that the $L^2$ projection $\Pi_\TT^0\colon P^{p+1}(\Omega)\to P^0(\TT)$ has a trivial kernel, Algorithm~\ref{alg:vicinity} terminates in finite steps.

\subsection{Definition with {vanishing trace}}\label{sec:def:P0}
We use a similar notation as in Section~\ref{sec:def}: Let $p\in\N_0$ be given. For any $z\in\VV_0$, let $\tpatch_z\subseteq \TT$ denote the vicinity satisfying Assumption~\ref{ass:vertices:constant}. We proceed to define the weight functions for the quasi-interpolators based on piecewise constants.

\begin{subequations}\label{eq:defphi:P0}
\textit{Vertex weight functions.}
For any $z\in\VV_0$, fix some $T_z\in \tpatch_z$ with $z\in\VV_{T_z}$. Define the extended basis function 
\(q_{\star,z} := \Lambda^{(p+1)}_{T_z,\tPatch_{z}}(\eta_{\star}|_{T_z}) \). The weight function $\phi_{z}\in P^{0}(\tpatch_z)$ satisfies: 
\begin{align}\begin{split}\label{eq:defphi:z:P0}
    \ip{\phi_z}{q_{\star,z}}_{\tPatch_z} &= \delta_{z,\star} \quad\forall \star\in\II_{p+1,T_z}, \\
    \norm{\phi_z}{L^\infty(\tPatch_z)}&\lesssim \frac{1}{|\tPatch_z|}.
\end{split}
\end{align}

\textit{Edge weight functions.}
For any $E\in\EE_0$, fix $T_E$ with $E\in\EE_{T_E}$ and choose $z_E\in\VV_0$, $r_E\in\N$, $\tpatch_E\subseteq\TT$ with domain $\tPatch_E$ such that 
\begin{align*}
  T_E\subset \tPatch_E \quad\text{and}\quad 
  \tPatch_{z_E}\subseteq \tPatch_E\subseteq \Patch^{(r_E)}(\{z_E\}).
\end{align*}
Define the extended basis function 
\( q_{\star,E} := \Lambda^{(p+1)}_{T_z,\tPatch_{E}}(\eta_{\star}|_{T_E}) \). The edge weight function $\phi_{E,j} \in P^{0}(\tpatch_E)$ is such that
\begin{align}\begin{split}
    \ip{\phi_{E,j}}{q_{\star,E}}_{\tPatch_E} &= \delta_{\star,(E,j)} \quad\forall \star\in \II_{p+1,T_E}, \\
    \norm{\phi_{E,j}}{L^\infty(\tPatch_E)}&\lesssim \frac{1}{|\tPatch_E|}.
\end{split}
\end{align}

\textit{Element weight functions.}
For any $T\in\TT$ choose $z_T\in\VV_0$, $r_T\in\N$, and $\tpatch_T\subseteq\TT$ with domain $\tPatch_T$ such that
\begin{align*}
  T\subset \tPatch_T \quad\text{and}\quad \tPatch_{z_T}\subseteq \tPatch_T \subseteq \patch^{(r_T)}(\{z_T\}).
\end{align*}
Define the extended basis function by \(q_{\star,T} := \Lambda^{(p+1)}_{T,\tPatch_T}(\eta_{\star}|_{T}).\)
Let $\phi_{T,k}\in P^0(\tpatch_T)$, $k=1,\dots,M_{p+1}$ be such that 
\begin{align}\begin{split}
    \ip{\phi_{T,k}}{q_{\star,T}}_{\tPatch_T} &= \delta_{\star,(T,k)} \quad\forall \star\in \II_{p+1,T}, \\
    \norm{\phi_{T,k}}{L^\infty(\tPatch_T)}&\lesssim \frac{1}{|\tPatch_T|}.
\end{split}
\end{align}
Set $R_p = \max\set{r_E,r_T}{E\in\EE_0,T\in\TT}$.
The existence of functions $\phi_\star$ is guaranteed if Assumption~\ref{ass:vertices:constant} is satisfied. 
Construction of the weight functions can be done as in Section~\ref{sec:constr} with obvious modifications.

We extend all $\phi_\star$, $\star\in\II_{p+1,0}$, by zero onto the whole domain $\Omega$, which implies that
\begin{align}
  \phi_\star\in P^0(\TT).
\end{align}
\end{subequations}

We are now in a position to define our quasi-interpolator. Suppose that $\phi_\star$, $\star\in\II_{p+1,0}$, satisfy~\eqref{eq:defphi}. 
For $v\in L^1(\Omega)$ set
\begin{align}\label{def:quasiintz}\begin{split}
  \qintzpz v &= \sum_{\star\in\II_{p+1,0}} \ip{v}{\phi_\star}_\Omega \eta_\star 
  \\
  &= \sum_{z\in\VV_0} \ip{v}{\phi_z}_\Omega\eta_z + \sum_{E\in\EE_0}\sum_{j=1}^{N_{p+1}}\ip{v}{\phi_{E,j}}_\Omega\eta_{E,j} 
  + \sum_{T\in\TT}\sum_{k=1}^{M_{p+1}}\ip{v}{\phi_{T,k}}_\Omega\eta_{T,k}.
\end{split}
\end{align}
The next result is one of our main theorems. 
\begin{theorem}\label{thm:qintzpz}
  Operator $\qintzpz\colon L^2(\Omega)\to P^{p+1}_{c,0}(\TT)$ from~\eqref{def:quasiintz} is well defined and satisfies
  \begin{enumerate}[label={\upshape(\roman*)}, align=right, widest=iii]
    \item $\qintzpz = \qintzpz\Pi_\TT^0$, 
    \item $\qintzpz q|_T = q|_T$ for all $q\in P^{p+1}(\Patch^{(R_p)}(T))$ with $q|_{\partial\Omega\cap \partial\Patch^{(R_p)}(T)}=0$, $T\in\TT$,
    \item $\norm{\qintzpz v}{T} \lesssim \norm{v}{\Patch^{(R_p)}(T)}$ for $v\in L^2(\Omega)$, $T\in\TT$,
    \item $\norm{\nabla\qintzpz v}{T} \lesssim \norm{\nabla v}{\Patch^{(R_p)}(T)}$ for $v\in H_0^1(\Omega)$, $T\in\TT$, 
    \item $\norm{(1-\qintzpz)v}{T} \lesssim h_T^m\norm{D^m v}{\Patch^{(R_p)}(T)}$ for $v\in H^m(\Omega)\cap H_0^1(\Omega)$, $T\in\TT$, $m=1,\cdots,p+2$.
  \end{enumerate}
  The involved constants depend only on shape-regularity, $R_p$, and $p\in\N_0$.
\end{theorem}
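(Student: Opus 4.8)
The plan is to mirror the proof of Theorem~\ref{thm:qintpz} almost verbatim, since the only structural difference is that the weight functions $\phi_\star$ now lie in $P^0(\TT)$ rather than $P^p(\TT)$, and the relevant patches are the vicinities $\tpatch_\bullet$ (contained in $\Patch^{(R_p)}(\cdot)$) whose existence is guaranteed by Assumption~\ref{ass:vertices:constant} instead of Lemma~\ref{lem:fullrank}. First I would record well-definedness: the $\phi_\star$ are in $L^\infty(\Omega)$ by construction~\eqref{eq:defphi:P0}, so $\qintzpz v$ makes sense for $v\in L^1(\Omega)\supseteq L^2(\Omega)$. Property~(i) $\qintzpz = \qintzpz\Pi_\TT^0$ is immediate from $\phi_\star\in P^0(\TT)$: for $v\in L^2(\Omega)$ and each $\star$ we have $\ip{v}{\phi_\star}_\Omega = \ip{\Pi_\TT^0 v}{\phi_\star}_\Omega$, and summing gives the claim.

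Next I would prove the local $L^2$-stability~(iii). Writing $S = \Patch^{(R_p)}(T)$, one has $|S|\eqsim h_T^d$ with constants depending on shape-regularity and $R_p$; combined with the $L^\infty$-bounds $\norm{\phi_\star}{L^\infty}\lesssim 1/|\tPatch_\star| \lesssim 1/|S|$ from~\eqref{eq:defphi:P0}, the normalization $\norm{\eta_\star}{L^\infty(\Tref)}\eqsim 1$, and standard scaling, we get $\norm{\phi_\star}{S}\norm{\eta_\star}{S}\lesssim 1$. Cauchy--Schwarz and the triangle inequality over the finitely many indices $\star\in\II_{p+1,T}\cap\II_{p+1,0}$ that contribute to $\qintzpz v|_T$ then yield $\norm{\qintzpz v}{T}\lesssim \norm{v}{S}$.

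The heart of the argument is the polynomial-preservation property~(ii). Given $q\in P^{p+1}(S)$ with $q|_{\partial\Omega\cap\partial S}=0$, expand $q|_T$ in the element basis $\{\eta_\star|_T\}_{\star\in\II_{p+1,T}\cap\II_{p+1,0}}$ with coefficients $\alpha_\star$. For a vertex index $z\in\VV_T\cap\VV_0$: on $\tPatch_z$ write $q|_{\tPatch_z} = \sum_{\star} \alpha_\star' q_{\star,z}$ where $q_{\star,z} = \Lambda^{(p+1)}_{T_z,\tPatch_z}(\eta_\star|_{T_z})$; Lemma~\ref{lem:independence} (applied to $T$ and $T_z$, using $z\in\VV_T\cap\VV_{T_z}$) gives $\alpha_z = \alpha_z'$, and~\eqref{eq:defphi:z:P0} yields $\ip{q}{\phi_z}_\Omega = \ip{q|_{\tPatch_z}}{\phi_z}_{\tPatch_z} = \alpha_z' = \alpha_z$. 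The identical reasoning with the edge patches $\tpatch_E$ and element patches $\tpatch_T$ (again invoking Lemma~\ref{lem:independence} for the edge coefficients, using $E\in\EE_T\cap\EE_{T_E}$, and noting the element-interior coefficients are read off directly) gives $\ip{q}{\phi_{E,j}}_\Omega = \alpha_{E,j}$ and $\ip{q}{\phi_{T,k}}_\Omega = \alpha_{T,k}$. Hence $\qintzpz q|_T = \sum_\star \ip{q}{\phi_\star}_\Omega\eta_\star|_T = \sum_\star\alpha_\star\eta_\star|_T = q|_T$. Here the inclusions $T_z,T_E\subseteq\tPatch_\bullet$ and the fact that all these vicinities sit inside $\Patch^{(R_p)}(T)$ are what make the local identifications legitimate; this bookkeeping of patch inclusions is the one place requiring care, and is the main (though mild) obstacle.

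With~(ii),~(iii) in hand, property~(v) is the standard Bramble--Hilbert argument: for $m\in\{1,\dots,p+2\}$ and any $q\in P^{m-1}(S)\subseteq P^{p+1}(S)$ (chosen with $q|_{\partial\Omega\cap\partial S}=0$ when that boundary portion has positive surface measure), write $\norm{(1-\qintzpz)v}{T} = \norm{(1-\qintzpz)(v-q)}{T}\lesssim \norm{v-q}{S}$ by~(iii), then take the infimum over such $q$ and use the Bramble--Hilbert lemma together with $h_T\eqsim\diam(S)$ to get $\lesssim h_T^m\norm{D^m v}{S}$. Finally,~(iv): for $q\in P^0(S)$ (with the same boundary caveat), $\nabla q = 0$, so by~(ii), an inverse estimate on the piecewise polynomial $\qintzpz(v-q)\in P^{p+1}_{c,0}(\TT)$, and~(iii), $\norm{\nabla\qintzpz v}{T} = \norm{\nabla\qintzpz(v-q)}{T}\lesssim h_T^{-1}\norm{\qintzpz(v-q)}{T}\lesssim h_T^{-1}\norm{v-q}{S}\lesssim\norm{\nabla v}{S}$, the last step being the $m=1$ case of the Bramble--Hilbert estimate. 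All constants depend only on shape-regularity, $R_p$, and $p$, since only finitely many reference configurations enter through the fixed polynomial degree and the bounded patch depth.
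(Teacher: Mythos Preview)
Your proposal is correct and follows precisely the paper's approach: the paper's proof simply states that the argument is identical to that of Theorem~\ref{thm:qintpz} with the obvious modifications, which is exactly what you have spelled out. Your flagging of the patch-inclusion bookkeeping as the one point needing care is apt and consistent with the paper's implicit treatment.
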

\begin{proof}
  The proof follows the same lines of argumentation as given in the proof of Theorem~\ref{thm:qintpz}. Therefore, we omit further details.
\end{proof}

\subsection{Definition without {boundary constraints}}\label{sec:def:general:P0}
For the definition of operators based on piecewise constant weight functions we follow ideas from the previous sections and use a similar notation as in Section~\ref{sec:def:general}.
For any $z\in\VV\setminus \VV_0$ fix some $T_z\in \patch(\{z\})$ and $v_z\in\VV_0$, $r_z\in\N$, $\tpatch_z\subseteq\TT$ with domain $\tPatch_z$ such that
\begin{align*}
  T_z\subset \tPatch_{z} \quad\text{and}\quad \tPatch_{v_z} \subseteq \tPatch_z \subseteq \patch^{(r_z)}(\{v_z\}).
\end{align*}
Note that for $z\in\VV_0$ we choose the domain as in Section~\ref{sec:def:P0}.
For any $E\in\EE$ fix $T_E\in\TT$ with $E\in\EE_{T_E}$ and choose $z_E\in\VV_0$, $r_E\in\N$ and $\tpatch_E\subseteq\TT$ with domain $\tPatch_E$ such that 
\begin{align*}
  T_E\subset \tPatch_E \quad\text{and}\quad \tPatch_{z_E}\subseteq \tPatch_E \subseteq \patch^{(r_E)}(\{z_E\}).
\end{align*}
For any $T\in\TT$ choose $z_T\in\VV_0$, $r_T\in\N$ and $\tpatch_T\subseteq\TT$ with domain $\tPatch_T$ such that
\begin{align*}
  T\subset\tPatch_T \quad\text{and}\quad \tPatch_{z_T} \subseteq \tPatch_T \subseteq \patch^{(r_T)}(\{z_T\}).
\end{align*}
Set $\widetilde{R}_p = \max\set{r_z,r_E,r_T}{z\in\VV,E\in\EE,T\in\TT}$.
We define weight functions $\phi_\star$, $\star\in\II_{p+1}$, as in~\eqref{eq:defphi:P0}.
The quasi-interpolator $\qintzp$ is then defined for $v\in L^1(\Omega)$ by
\begin{align}\label{def:quasiintzp}\begin{split}
    \qintzp v &= \sum_{\star\in\II_{p+1}} \ip{v}{\phi_\star}_\Omega\eta_\star 
  \\ 
  &=\sum_{z\in\VV} \ip{v}{\phi_z}_\Omega\eta_z + \sum_{E\in\EE}\sum_{j=1}^{N_{p+1}}\ip{v}{\phi_{E,j}}_\Omega\eta_{E,j} 
  + \sum_{T\in\TT}\sum_{k=1}^{M_{p+1}}\ip{v}{\phi_{T,k}}_\Omega\eta_{T,k}.
\end{split}
\end{align}
The proof of the next result follows along the lines of the proof of Theorem~\ref{thm:qintpz} with obvious modifications. Therefore, we omit details.
\begin{theorem}\label{thm:qintzp}
  Operator $\qintzp\colon L^2(\Omega)\to P^{p+1}_{c}(\TT)$ from~\eqref{def:quasiintzp} is well defined and satisfies
  \begin{enumerate}[label={\upshape(\roman*)}, align=right, widest=iii]
    \item $\qintzp = \qintzp\Pi_\TT^0$, 
    \item $\qintzp q|_T = q|_T$ for all $q\in P^{p+1}(\Patch^{(\widetilde{R}_p)}(T))$, $T\in\TT$,
    \item $\norm{D^\ell\qintzp v}{T} \lesssim \norm{D^\ell v}{\Patch^{(\widetilde{R}_p)}(T)}$ for $v\in H^\ell(\Omega)$, $T\in\TT$, $\ell=0,1$, 
    \item $\norm{(1-\qintzp)v}{T} \lesssim h_T^m\norm{D^m v}{\Patch^{(\widetilde{R}_p)}(T)}$ for $v\in H^m(\Omega)$, $T\in\TT$, $m=1,\cdots,p+2$.
  \end{enumerate}
  The involved constants depend only on shape-regularity, $\widetilde{R}_p$, and $p\in\N_0$.
  \qed
\end{theorem}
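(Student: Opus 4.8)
The plan is to mirror the proof of Theorem~\ref{thm:qintpz} essentially verbatim, since the operator $\qintzp$ has the exact same algebraic structure as $\qintpz$: it is a finite sum of functionals $v\mapsto\ip{v}{\phi_\star}_\Omega$ composed with basis functions $\eta_\star$, the only differences being that the index set runs over all of $\II_{p+1}$ (no boundary constraint), the weight functions $\phi_\star$ are piecewise constant rather than piecewise polynomial of degree $p$, and the patches are the enlarged vicinities $\tPatch_\star$ rather than the one-ring patches. Because the relevant local duality relations~\eqref{eq:defphi:z:P0} (and its edge/element analogues) have exactly the same form as~\eqref{eq:defphi:z}, the combinatorial/Bramble--Hilbert machinery carries over without essential change. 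Accordingly I would not reprove everything; I would indicate the modifications and invoke the earlier argument, which is precisely what the paper does with ``we omit details''.

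Concretely, the key steps in order are: (i) $L^\infty$-boundedness of each $\phi_\star$ together with $\#\II_{p+1}$ being finite gives well-definedness on $L^1(\Omega)\supseteq L^2(\Omega)$; (ii) since each $\phi_\star\in P^0(\TT)\subset P^p(\TT)$, for any $v$ we have $\ip{v}{\phi_\star}_\Omega=\ip{\Pi_\TT^0 v}{\phi_\star}_\Omega$, giving property (i); (iii) the local $L^2$-stability (iii) for $\ell=0$ follows from $|\tPatch_\star|\eqsim h_T^d$ (shape-regularity plus $\widetilde R_p$), the normalization $\norm{\phi_\star}{L^\infty}\lesssim|\tPatch_\star|^{-1}$ and the assumed $\norm{\eta_\star}{L^\infty}\eqsim 1$, so $\norm{\phi_\star}{S}\norm{\eta_\star}{S}\lesssim 1$ with $S=\Patch^{(\widetilde R_p)}(T)$, and a Cauchy--Schwarz plus triangle-inequality estimate over the finitely many $\star$ touching $T$; (iv) the local polynomial-preservation (ii) uses Lemma~\ref{lem:independence} to identify the coefficients of $q\in P^{p+1}(S)$ attached to vertices, edges and element interiors of a fixed $T$, noting that on each relevant sub-patch $\tPatch_\star$ the restriction $q|_{\tPatch_\star}$ lies in $P^{p+1}(\tPatch_\star)$ and the duality relations then recover exactly that coefficient — here Assumption~\ref{ass:vertices:constant} is what guarantees the $\phi_\star$ exist and the relations are consistent; (v) (iv) for $m\geq1$ and the $\ell=1$ stability follow by the standard trick of subtracting a suitable polynomial $q\in P^{m-1}(S)$ (resp.\ $q\in P^0(S)$), using (ii), (iii)($\ell=0$), an inverse estimate for the gradient bound, and the Bramble--Hilbert lemma, with $h_T\eqsim\diam(S)$.

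The one place that genuinely differs from Theorem~\ref{thm:qintpz} and deserves a sentence is property (ii): in Theorem~\ref{thm:qintpz} polynomial preservation held because $\phi_z$ was exactly paired against the full $P^{p+1}$ basis on $\patch_z$ via the $L^2$ inner product, and the existence of such $\phi_z$ rested on Lemma~\ref{lem:fullrank} (full rank of $\Pi^p_{\patch_z}|_{P^{p+1}}$); for $\qintzp$ the analogous statement is that $\Pi^0_{\tpatch_z}|_{P^{p+1}(\tPatch_z)}$ has trivial kernel, which is exactly Assumption~\ref{ass:vertices:constant}. So the structure of the coefficient-matching argument is identical once one replaces ``Lemma~\ref{lem:fullrank}'' by ``Assumption~\ref{ass:vertices:constant}'' and ``$\Pi^p$'' by ``$\Pi^0$'' throughout, and property (i) becomes $\qintzp=\qintzp\Pi_\TT^0$ rather than $\qintpz=\qintpz\Pip$.

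The main obstacle — really the only nontrivial point — is bookkeeping: one must check that for a fixed $T$, each degree of freedom $\star$ with $\eta_\star|_T\not\equiv0$ has its defining patch $\tPatch_\star$ contained in $\Patch^{(\widetilde R_p)}(T)$ (true by the definition of $\widetilde R_p$ and of the vicinities, which are nested between $\tPatch_{z_\star}$ and a power of the one-ring patch of an interior vertex), so that all the local arguments localize to the stated superpatch $\Patch^{(\widetilde R_p)}(T)$ and the constants depend only on shape-regularity, $\widetilde R_p$ and $p$. Everything else is a direct transcription of the proof of Theorem~\ref{thm:qintpz}; hence it is legitimate to state the theorem and write that the proof follows that of Theorem~\ref{thm:qintpz} with the substitutions indicated above, omitting the repeated details.
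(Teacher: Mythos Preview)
Your proposal is correct and matches the paper's own approach exactly: the paper simply states that the proof follows along the lines of the proof of Theorem~\ref{thm:qintpz} with obvious modifications and omits details. Your write-up goes further by spelling out those modifications (replacing $\Pip$ by $\Pi_\TT^0$, Lemma~\ref{lem:fullrank} by Assumption~\ref{ass:vertices:constant}, $\II_{p+1,0}$ by $\II_{p+1}$, and the patches by the enlarged vicinities bounded by $\widetilde R_p$), all of which are accurate.
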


%%%%%%%%%%%%%%%%%%%%%%%%%%%%%%%%%%
%% APPLICATIONS                 %%
%%%%%%%%%%%%%%%%%%%%%%%%%%%%%%%%%%
\section{Applications}\label{sec:appl}
In this section, we discuss how to use the operators from the previous section to enhance the accuracy of piecewise polynomial approximations. We begin with the presentation of the main results and then compare the postprocessing methods to known procedures from the literature (Section~\ref{sec:appl:comp}).
Furthermore, we discuss the postprocessing of solutions to some DPG and HDG methods in Section~\ref{sec:appl:dpg} and~\ref{sec:appl:hdg} below.
Finally, in Section~\ref{sec:appl:proj} we show how our quasi-interpolation operators can be used to define projection operators onto $P^p(\TT)$, which are bounded in the dual space $H^{-1}(\Omega) = \big(H_0^1(\Omega)\big)'$.

\subsection{Postprocessing method}\label{sec:appl:postproc}
The next two theorems show how to use the quasi-interpolators designed in Sections~\ref{sec:qintpz} and~\ref{sec:qintzpz} as postprocessing techniques.
\begin{theorem}\label{thm:postproc}
  Let $u_\TT\in L^2(\Omega)$ with $p\in \N_0$ be given. 
  \begin{itemize}
    \item Suppose that $u\in H^{p+2}(\Omega)\cap H_0^1(\Omega)$ and $\norm{\Pip (u-u_\TT)}\Omega = \OO(h^{p+2})$. Then, $u_\TT^\star := \qintpz u_\TT$ satisfies
      \begin{align*}
        \norm{u-u_\TT^\star}{\Omega} = \OO(h^{p+2}).
      \end{align*}
    \item Suppose that $u\in H^{p+2}(\Omega)$ and $\norm{\Pip (u-u_\TT)}\Omega = \OO(h^{p+2})$. Then, $u_\TT^\star := \qintp u_\TT$ satisfies
      \begin{align*}
        \norm{u-u_\TT^\star}{\Omega} = \OO(h^{p+2}).
      \end{align*}
  \end{itemize}
\end{theorem}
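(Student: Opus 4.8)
The plan is to derive the postprocessing estimate by splitting the error via the triangle inequality and exploiting the two defining properties of the quasi-interpolators established in Theorems~\ref{thm:qintpz} and~\ref{thm:qintp}: the approximation property (v)/(iv) and the identity $\qintpz = \qintpz\Pip$ (resp.\ $\qintp=\qintp\Pip$). Concretely, for the first bullet I would write
\begin{align*}
  \norm{u-u_\TT^\star}\Omega = \norm{u - \qintpz u_\TT}\Omega \leq \norm{u-\qintpz u}\Omega + \norm{\qintpz(u-u_\TT)}\Omega.
\end{align*}
The first term is bounded by $\lesssim h^{p+2}\norm{D^{p+2}u}\Omega = \OO(h^{p+2})$ using property (v) of Theorem~\ref{thm:qintpz} with $m=p+2$, summed over all $T\in\TT$ (the finite overlap of the patches $\Patch^{(R)}(T)$, controlled by shape-regularity and $R$, makes the sum of local contributions comparable to the global norm). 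For the first term I also need $u\in H^{p+2}(\Omega)\cap H_0^1(\Omega)$, which is exactly the hypothesis.

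For the second term the key observation is that $\qintpz = \qintpz\Pip$ (property (i)), hence
\begin{align*}
  \norm{\qintpz(u-u_\TT)}\Omega = \norm{\qintpz\Pip(u-u_\TT)}\Omega \lesssim \norm{\Pip(u-u_\TT)}\Omega = \OO(h^{p+2}),
\end{align*}
where the inequality is the $L^2$-stability of $\qintpz$ (property (iii), again summed over elements with bounded patch overlap). Combining the two bounds gives $\norm{u-u_\TT^\star}\Omega = \OO(h^{p+2})$, proving the first bullet. The second bullet is handled identically, replacing $\qintpz$ by $\qintp$, Theorem~\ref{thm:qintpz} by Theorem~\ref{thm:qintp}, and dropping the boundary condition on $u$ (Theorem~\ref{thm:qintp} does not require vanishing traces): properties (i), (iii) (the $\ell=0$ case), and (iv) of Theorem~\ref{thm:qintp} are precisely what is needed.

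The routine technical point to be careful about is passing from the local per-element estimates in Theorems~\ref{thm:qintpz}/\ref{thm:qintp} to a global $L^2(\Omega)$ bound; this uses that each point of $\Omega$ lies in only a bounded number of patches $\Patch^{(R)}(T)$ (a consequence of shape-regularity and the fixed value of $R$), so that $\sum_{T\in\TT}\norm{w}{\Patch^{(R)}(T)}^2 \lesssim \norm{w}\Omega^2$. There is no genuine obstacle here — the theorem is essentially a corollary of the already-proved properties of the operators — so the only thing to state cleanly is which property is invoked where and that the supercloseness hypothesis on $\Pip(u-u_\TT)$ feeds directly into the stability bound.
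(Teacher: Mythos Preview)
Your proposal is correct and matches the paper's own proof essentially line for line: the same triangle-inequality split, the same use of property~(i) to insert $\Pip$, the approximation property for the first term, and $L^2$-stability for the second. The only addition you make is spelling out the finite-overlap argument for passing from local to global estimates, which the paper leaves implicit.
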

\begin{proof}
  Using the triangle inequality and the properties of $\qintpz$ collected in Theorem~\ref{thm:qintpz} we find that
  \begin{align*}
    \norm{u-u_\TT^\star}{\Omega} = \norm{u-\qintpz u_\TT}\Omega &\leq \norm{u-\qintpz u}\Omega + \norm{\qintpz(u-u_\TT)}\Omega \\
    &= \norm{u-\qintpz u}\Omega + \norm{\qintpz\Pip(u-u_\TT)}\Omega
    \\
    &\lesssim h^{p+2} \norm{u}{H^{p+2}(\Omega)} + \norm{\Pip (u-u_\TT)}\Omega = \OO(h^{p+2}).
  \end{align*}
  This proves the first assertion, the second follows the same argumentation and is therefore omitted.
\end{proof}

The proof of the next theorem is similar to the last one and, therefore, omitted.
\begin{theorem}\label{thm:postproc:P0}
  Let $u_\TT\in L^2(\Omega)$ and $p\in \N_0$ be given. 
  \begin{itemize}
    \item Suppose that $u\in H^{p+2}(\Omega)\cap H_0^1(\Omega)$ and $\norm{\Pi^0(u-u_\TT)}\Omega = \OO(h^{p+2})$. Then, $u_\TT^\star := \qintzpz u_\TT$ satisfies
      \begin{align*}
        \norm{u-u_\TT^\star}{\Omega} = \OO(h^{p+2}).
      \end{align*}
    \item Suppose that $u\in H^{p+2}(\Omega)$ and $\norm{\Pi^0( u-u_\TT)}\Omega = \OO(h^{p+2})$. Then, $u_\TT^\star := \qintzp u_\TT$ satisfies
      \begin{align*}
        \norm{u-u_\TT^\star}{\Omega} = \OO(h^{p+2}).
      \end{align*}
  \end{itemize}
\end{theorem}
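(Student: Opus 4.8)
The plan is to follow the proof of Theorem~\ref{thm:postproc} line by line, replacing $\qintpz$ (resp.\ $\qintp$) by $\qintzpz$ (resp.\ $\qintzp$), replacing $\Pip$ by $\Pi_\TT^0$ (the operator denoted $\Pi^0$ in the hypothesis), and invoking Theorem~\ref{thm:qintzpz} (resp.\ Theorem~\ref{thm:qintzp}) wherever Theorem~\ref{thm:qintpz} was used. For the first bullet I would start from the triangle inequality
\[
  \norm{u-u_\TT^\star}\Omega
  = \norm{u-\qintzpz u_\TT}\Omega
  \le \norm{u-\qintzpz u}\Omega + \norm{\qintzpz(u-u_\TT)}\Omega .
\]
The first term is bounded by taking the element-local approximation estimate of Theorem~\ref{thm:qintzpz}\,(v) with $m=p+2$, squaring, and summing over all $T\in\TT$: since $R_p$ is a fixed integer, the enlarged patches $\{\Patch^{(R_p)}(T)\}_{T\in\TT}$ have bounded overlap (depending only on shape-regularity and $R_p$), so this yields $\norm{u-\qintzpz u}\Omega \lesssim h^{p+2}\norm{D^{p+2}u}\Omega \le h^{p+2}\norm{u}{H^{p+2}(\Omega)}$. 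Here the hypothesis $u\in H^{p+2}(\Omega)\cap H_0^1(\Omega)$ is exactly what Theorem~\ref{thm:qintzpz}\,(v) requires.

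For the second term I would use the projection identity Theorem~\ref{thm:qintzpz}\,(i), $\qintzpz=\qintzpz\Pi_\TT^0$, to rewrite $\qintzpz(u-u_\TT)=\qintzpz\Pi_\TT^0(u-u_\TT)$, and then sum the local $L^2$-stability estimate of Theorem~\ref{thm:qintzpz}\,(iii) over all elements (again using the bounded overlap of the patches) to obtain $\norm{\qintzpz\Pi_\TT^0(u-u_\TT)}\Omega \lesssim \norm{\Pi_\TT^0(u-u_\TT)}\Omega$, which is $\OO(h^{p+2})$ by hypothesis. Adding the two contributions gives $\norm{u-u_\TT^\star}\Omega=\OO(h^{p+2})$. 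The second bullet is handled identically, using Theorem~\ref{thm:qintzp} in place of Theorem~\ref{thm:qintzpz} and $\Patch^{(\widetilde{R}_p)}(T)$ in place of $\Patch^{(R_p)}(T)$; since no boundary condition is imposed there, $u\in H^{p+2}(\Omega)$ suffices and the projection identity reads $\qintzp=\qintzp\Pi_\TT^0$.

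There is no genuine obstacle here: the argument is structurally identical to that of Theorem~\ref{thm:postproc}. The only point needing (routine) attention is the passage from the element-wise bounds in Theorems~\ref{thm:qintzpz} and~\ref{thm:qintzp} to global $L^2$-bounds, which is the standard finite-overlap argument for the fixed-generation patches $\Patch^{(R_p)}(T)$ and $\Patch^{(\widetilde{R}_p)}(T)$. Consequently, as the authors indicate, one may simply omit the details.
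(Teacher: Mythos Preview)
Your proposal is correct and follows exactly the approach the paper intends: the paper explicitly omits the proof of Theorem~\ref{thm:postproc:P0}, stating that it is ``similar to the last one'' (i.e., Theorem~\ref{thm:postproc}), and your argument reproduces that proof with the obvious substitutions $\qintpz\to\qintzpz$, $\qintp\to\qintzp$, $\Pip\to\Pi_\TT^0$, and Theorem~\ref{thm:qintpz}/\ref{thm:qintp}~$\to$~Theorem~\ref{thm:qintzpz}/\ref{thm:qintzp}. Your added remark about the finite-overlap argument when passing from local to global bounds is a routine detail that the paper suppresses in the proof of Theorem~\ref{thm:postproc} as well.
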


\subsubsection{Comparison to existing postprocessing technique in mixed FEM}\label{sec:appl:comp}
For a comparison to the postprocessing procedure from~\cite{Stenberg91} we consider a concrete example. The dual mixed FEM for the Poisson problem with homogeneous Dirichlet boundary condition reads: Find $(\ssigma_\TT,u_\TT)\in \RT^p(\TT)\times P^p(\TT)$ such that
\begin{subequations}\label{eq:mixed:standard}
  \begin{alignat}{2}
    &\ip{\ssigma_\TT}{\ttau}_\Omega + \ip{u_\TT}{\div\ttau}_\Omega &\,=\,& 0, \\
    &\ip{\div\ssigma_\TT}{v}_\Omega &\,=\,&  \ip{-f}{v}_\Omega
  \end{alignat}
\end{subequations}
for all $(\ttau,v)\in \RT^p(\TT)\times P^p(\TT)$, where $\RT^p(\TT)\subset \Hdivset\Omega$ denotes the Raviart--Thomas space of order $p\in\N_0$. 
It is known~\cite[Theorem~2.1]{Stenberg91} that if $\Omega$ is convex then
\begin{align}\label{eq:mixed:superclose}
  \norm{\Pip u - u_\TT}\Omega \lesssim 
  \begin{cases}
    h^{p+2}\norm{u}{H^{p+2}(\Omega)} & \text{if } p>0, \\
    h^{2}\norm{u}{H^{3}(\Omega)} & \text{if }p=0.
  \end{cases}
\end{align}

\begin{remark}
  The regularity assumption $u\in H^3(\Omega)$ for $p=0$ in~\eqref{eq:mixed:superclose} can be reduced to $u\in H^2(\Omega)$ by replacing the right-hand side in~\eqref{eq:mixed:standard} with a regularized forcing term~\cite{MixedFEMHm1}.\qed
\end{remark}
We consider the following postprocessing technique, see~\cite{Stenberg91}, where we replace the right-hand side $\ip{f}{v}_T + \dual{\ssigma_\TT\cdot\normal}v_{\partial T}$ by $\ip{\ssigma_\TT}{\nabla v}_T$.
Suppose that $(\ssigma_\TT,u_\TT)\in \RT^p(\TT)\times P^p(\TT)$ is the solution of~\eqref{eq:mixed:standard}. 
Find $u_\TT^{\star,\mathrm{st}} \in P^{p+1}(\TT)$ such that
\begin{subequations}\label{eq:mixed:postproc}
\begin{align}
  \ip{\nabla u_\TT^{\star,\mathrm{st}}}{\nabla v}_T &= \ip{\ssigma_\TT}{\nabla v}_T \quad\forall v\in P^{p+1}(T),\forall T\in\TT, \\
  \Pi_\TT^0 u_\TT^{\star,\mathrm{st}}&= \Pi_\TT^0 u_\TT.
\end{align}
\end{subequations}
Define also $u_\TT^\star = \qintpz u_\TT$. Supposing $\Omega$ is convex and that $u$ is sufficiently regular so that estimate~\eqref{eq:mixed:superclose} holds, we get 
\begin{align*}
  \norm{u-u_\TT^{\star,\mathrm{st}}}\Omega \eqsim \OO(h^{p+2}), \quad\text{and}\quad
  \norm{u-u_\TT^\star}\Omega \eqsim \OO(h^{p+2})
\end{align*}
by~\cite[Theorem~3.1]{Stenberg91} and Theorem~\ref{thm:postproc}, respectively. 
We note that we use a slightly modified right-hand side in~\eqref{eq:mixed:postproc} compared to~\cite[Eq.(2.16a)]{Stenberg91} but stress that the same proof ideas apply.
Comparing $u_\TT^{\star,\mathrm{st}}$ with $u_\TT^\star$ we first observe that both yield the same enhanced order of convergence. Second, the polynomial degree of both is the same with the difference that $u_\TT^{\star,\mathrm{st}}$ is a non-conforming approximation whereas $u_\TT^\star$ is a conforming one.
Finally, we stress that $u_\TT^{\star,\mathrm{st}}$ is defined via a local PDE using both solution components $u_\TT$, $\ssigma_\TT$, whereas $u_\TT^\star$ is defined using only operator $\qintpz$ and $u_\TT$.

\subsubsection{DPG for elasticity}\label{sec:appl:dpg}
The authors of~\cite{SupConvDPGelasticity} consider the following ultraweak formulation of the elasticity problem with homogeneous Dirichlet boundary conditions: Given $f\in L^2(\Omega)^d$ find $\uu = (u,\sigma,\widehat u,\widehat\sigma)\in U$ such that
\begin{align}\label{eq:dpgultraweak}
  b(\uu,\vv) = \ip{f}v_\Omega \quad\text{for all } \vv = (v,\tau,q)\in V, 
\end{align}
where
\begin{align*}
  b(\uu,\vv) &= \ip{\sigma}{\tau}_\Omega + \ip{u}{\div_\TT\tau}_\Omega + \ip{\sigma}{\nabla_\TT v}_\Omega + \ip{\sigma}q_\TT
  -\dual{\widehat u}{\tau\cdot\normal}_{\partial\TT} - \dual{\widehat\sigma}{v}_{\partial \TT}, \\
  U &= L^2(\Omega)^d \times L^2(\Omega)^{d\times d} \times H^{1/2}_{00}(\partial\TT)\times H^{-1/2}(\partial\TT), \\
  V &= H^1(\TT)^d \times \Hdivset{\TT}\cap \LL_\mathrm{sym}^2(\Omega) \times \LL_\mathrm{skew}^2(\Omega).
\end{align*}
Here, $H^1(\TT)$, $\Hdivset{\TT}$ (tensors with square-integrable row-wise divergence) denote broken Sobolev spaces, $\LL_\mathrm{sym}^2(\Omega)$ and $\LL_\mathrm{skew}^2(\Omega)$ denote symmetric and skew-symmetric square-integrable tensor functions, respectively. 
Furthermore, $H^{1/2}_{00}(\partial\TT)$ and $H^{-1/2}(\partial\TT)$ denote trace spaces of $H_0^1(\Omega)^d$ and $\Hdivset\Omega$, and
$\dual{\cdot}{\cdot}_{\partial\TT}$ denote trace dualities. 
In order to keep the presentation short we refer for further details to~\cite{SupConvDPGelasticity}. We note that here, for simplicity, we consider the compliance tensor to be the identity.

Following~\cite{SupConvDPGelasticity} we define
\begin{align*}
  P^{p+1}_{c,0}(\partial\TT) &= \set{v\in H_{00}^{1/2}(\partial\TT)}{v|_E\in P^{p+1}(E) \text{ for all } E\in\EE},\\
  P^p(\partial\TT) &= \set{v\in H^{-1/2}(\partial\TT)}{v|_E \in P^p(E) \text{ for all } E\in\EE} ,\\
  U_\TT^{p,0} &= P^{p}(\TT)^d\times P^p(\TT)^{d\times d} \times P^{p+1}_{c,0}(\partial\TT) \times P^p(\partial\TT), \\
  V_\TT^p &= P^{p+d}(\TT)^d \times P^{p+2}(\TT)^{d\times d}\cap \LL_\mathrm{sym}^2(\Omega) \times P^p(\TT)^{d\times d}\cap \LL_\mathrm{skew}^2(\Omega).
\end{align*}

The DPG method now reads: Find $\uu_\TT\in U_\TT^{p,0}$ such that
\begin{align}\label{eq:dpgelasticity}
  \norm{b(\uu-\uu_\TT,\cdot)}{(V_\TT^p)'} = \min_{\ww_\TT\in U_\TT^{p,0}} \norm{b(\uu-\ww_\TT,\cdot)}{(V_\TT^p)'}.
\end{align}
As shown in~\cite[Theorem~2.2]{SupConvDPGelasticity} the solution $\uu_\TT$ is quasi-optimal with respect to the canonical product norm on $U$.

We consider the postprocessing scheme~\cite[Eq.(5.5)]{SupConvDPGelasticity}: Let $\uu_\TT=(u_\TT,\sigma_\TT,\widehat u_\TT,\widehat\sigma_\TT)\in U_\TT^{p,0}$ denote the solution of~\eqref{eq:dpgelasticity}.
Find $u_\TT^{\star,\mathrm{st}}\in P^{p+1}(\TT)^d$
\begin{subequations}\label{eq:dpgelasticity:post}
\begin{align}
  \ip{\varepsilon(u_\TT^{\star,\mathrm{st}})}{\varepsilon(v)}_T &= \ip{\sigma_\TT}{\varepsilon(v)}_T \quad\forall v\in P^{p+1}(\TT)^d, \\
  \Pi_T^\mathrm{rm} u_\TT^{\star,\mathrm{st}}|_T &= \Pi_T^\mathrm{rm}u_\TT|_T \quad\forall T\in\TT.
\end{align}
\end{subequations}
Here, $\varepsilon(\cdot)$ denotes the symmetric gradient and $\Pi_T^\mathrm{rm}$ denotes the $L^2(T)$ orthogonal projection onto the rigid body motions which form the kernel of the symmetric gradient operator.
Let $\uu=(u,\ssigma,\widehat u,\widehat \sigma)\in U$ denote the solution of~\eqref{eq:dpgultraweak}.
Under common regularity assumptions (assuming $\Omega$ is convex in the present situation is sufficient) it is shown that 
\begin{align*}
  \norm{u-u_\TT^{\star,\mathrm{st}}}\Omega = \OO(h^{p+2}) \quad\text{if } p>0,
\end{align*}
see~\cite[Theorem~5.4]{SupConvDPGelasticity}. We emphasize that this result only holds true if $p\neq 0$, i.e., the lowest-order case is excluded. The reason is that the space of rigid body motions is spanned by more than only constant functions.
Nevertheless, assuming that $\Omega$ is convex, we have that~\cite[Theorem~5.1]{SupConvDPGelasticity}
\begin{align*}
  \norm{\Pip u-u_\TT}{\Omega} = \OO(h^{p+2}) \quad\text{if } p\geq 0.
\end{align*}
Therefore, we can apply our proposed postprocessing procedure,
\begin{align*}
  u_\TT^\star = \qintpz u_\TT,
\end{align*}
where the operator is applied componentwise, and by Theorem~\ref{thm:postproc} we conclude 
\begin{align*}
  \norm{u-u_\TT^\star}\Omega = \OO(h^{p+2}).
\end{align*}
We stress that the latter is also valid for $p=0$.

\subsubsection{Example HDG}\label{sec:appl:hdg}
In this section, we consider an HDG method for a first-order reformulation of the Poisson problem $\Delta u = -f$, $u|_{\partial\Omega} = 0$, cf.~\cite{CockburnGopalakrishnanSayas10}. Let $p\geq 1$. The whole HDG system reads: Find $(\qq_\TT,u_\TT,\widehat u_\TT) \in P^p(\TT)^d\times P^p(\TT)\times P^p(\partial\TT)$ such that
\begin{align*}
  \ip{\qq_\TT}{\rr}_\Omega - \ip{u_\TT}{\div_\TT\rr}_\Omega + \dual{\widehat u_\TT}{\rr\cdot\normal}_{\partial \TT} &= 0, \\
  -\ip{\qq_\TT}{\nabla_\TT w}_\Omega + \dual{\widehat\qq_\TT\cdot\normal}{w}_{\partial\TT} &= \ip{f}w_\Omega, \\
  \dual{\widehat u_\TT}{\mu}_{\partial\Omega} &= 0, \\
  \dual{\widehat\qq_\TT}{\mu}_{\partial\TT\setminus\partial\Omega} &= 0,
\end{align*}
for all $(\rr,w,\mu)\in P^p(\TT)^d\times P^p(\TT) \times P^p(\partial\TT)$. Here, the numerical flux is given by
\begin{align*}
  \widehat\qq_\TT\cdot\normal = \qq_\TT\cdot\normal + \tau(u_\TT-\widehat u_\TT)
\end{align*}
and $\tau\equiv \mathcal{O}(1)$ is the stabilization parameter.
Accuracy for approximations of the scalar field variable can be enhanced by employing the previously described postprocessing technique~\eqref{eq:mixed:postproc} to the solution component $u_\TT$ of the HDG method and with $\ssigma_\TT$ in~\eqref{eq:mixed:postproc} replaced by $-\qq_\TT$, see, e.g.~\cite[Section~5]{CockburnGopalakrishnanSayas10}. 
Particularly, we note that the supercloseness property $\norm{\Pi^0(u-u_\TT)}{\Omega} = \OO(h^{p+2})$ holds, assuming the same regularity assumptions as in the previous sections. We can thus use operator $\qintzpz$ to postprocess the solution, 
$u_\TT^\star := \qintzpz u_\TT$.
Then, 
\begin{align*}
  \norm{u-u_\TT^\star}\Omega =\OO(h^{p+2})
\end{align*}
by Theorem~\ref{thm:postproc:P0}.

\subsection{Projection operators in negative order Sobolev spaces}\label{sec:appl:proj}
In this section we construct projection operators $Q^p\colon H^{-1}(\Omega) \to P^p(\TT)$ and $\widetilde Q^p\colon \widetilde H^{-1}(\Omega)\to P^p(\TT)$ with $H^{-1}(\Omega)$ and $\widetilde H^{-1}(\Omega)$ denoting the dual spaces of $H_0^1(\Omega)$ and $H^1(\Omega)$, respectively.
We follow and extend some ideas from~\cite{MultilevelNorms21,MixedFEMHm1} which consider the lowest-order case only.
For an overview of other projection operators in negative order Sobolev spaces we refer to the recent work~\cite{DieningStornTscherpel23}.
The operators constructed in~\cite{DieningStornTscherpel23} map into $P_c^{p}(\TT)$, the space of continuous piecewise polynomials, whereas the operators we construct here map onto $P^p(\TT)$, the space of piecewise polynomials.
Application of projection operators in negative order spaces include multilevel decompositions~\cite{MultilevelNorms21}, interpolation on tensor meshes in space-time domains~\cite{DieningStornTscherpel23}, and regularization of load terms~\cite{MixedFEMHm1,FHK22,VeeserZanotti18}.

Given $T\in\TT$, let $\eta_{T,\mathrm{bubble}} = \prod_{z\in\VV_T} \eta_z|_T$ denote the element bubble function and set
\begin{align*}
  P^p_\mathrm{bubble}(T) = \set{\nu \eta_{T,\mathrm{bubble}}}{\nu\in P^p(T)}\subset H_0^1(T).
\end{align*}
Define for each $T\in\TT$, the dual functions $\nu_\bullet^{(T)} \in P^p_\mathrm{bubble}(T)$ by
\begin{align*}
  \ip{\nu_\bullet^{(T)}}{\eta^{(T)}_\star}_T = \delta_{\bullet,\star} \quad\text{for all } 
  \bullet,\star \in \II_{p,T}.
\end{align*}
In the following we extend the functions $\eta^{(T)}_\star$, $\nu_\bullet^{(T)}$ by zero outside of $T$.
We need the operator $B^p\colon L^2(\Omega) \to H_0^1(\Omega)$ given by
\begin{align*}
  B^p v = \sum_{T\in\TT} \sum_{\star\in \II_{p,T}} \ip{v}{\eta^{(T)}_\star}_\Omega\nu^{(T)}_\star.
\end{align*}
For each $p\in\N_0$ define the two operators $Q^p$ and $\widetilde Q^p$ by
\begin{align*}
  Q^p \phi &= \big(\qintpz + B^p(1-\qintpz) \big)'\phi = (\qintpz)'\phi + (1-\qintpz)'(B^p)'\phi, \\
  \widetilde Q^p \phi &= \big(\qintp + B^p(1-\qintp) \big)'\phi = (\qintp)'\phi + (1-\qintp)'(B^p)'\phi, 
\end{align*}
where the adjoint operators read
\begin{align*}
  (\qintpz)'\phi &= \sum_{\star \in \II_{p+1,0}} \ip{\phi}{\eta_\star}_\Omega \phi_\star, \quad 
  (\qintp)'\phi = \sum_{\star \in \II_{p+1}} \ip{\phi}{\eta_\star}_\Omega \phi_\star,
  \quad\text{and} \\
  (B^p)'\phi &= \sum_{T\in\TT} \sum_{\star\in \II_{p,T}} \ip{\phi}{\nu^{(T)}_\star}\eta^{(T)}_\star.
\end{align*}
In the next result, we present some properties of these operators. 
\begin{theorem}\label{thm:projHm1}
  Operators $Q^p\colon H^{-1}(\Omega)\to P^p(\TT)$ and $\widetilde Q^p\colon \widetilde H^{-1}(\Omega) \to P^p(\TT)$ are well defined and projections with
  \begin{align*}
    \norm{Q^p}{H^{-1}(\Omega)\to H^{-1}(\Omega)} + \norm{Q^p}{L^2(\Omega)\to L^2(\Omega)} &\lesssim 1, \\
    \norm{\widetilde Q^p}{\widetilde H^{-1}(\Omega)\to \widetilde H^{-1}(\Omega)} + \norm{\widetilde Q^p}{L^2(\Omega)\to L^2(\Omega)} &\lesssim 1.
  \end{align*}
  Furthermore, they satisfy the approximation estimates
  \begin{align*}
    \norm{(1-Q^p)\phi}{H^{-1}(\Omega)} + \norm{(1-\widetilde Q^p)\phi}{\widetilde H^{-1}(\Omega)} \lesssim \norm{h_\TT(1-\Pip)\phi}\Omega
  \end{align*}
  for all $\phi\in L^2(\Omega)$.
  The hidden constants depend on the shape-regularity of $\TT$, $R$, and $p\in\N_0$.
\end{theorem}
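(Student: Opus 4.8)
The plan is to work with the primal operators
\begin{align*}
  G^p := \qintpz + B^p(1-\qintpz), \qquad \widetilde G^p := \qintp + B^p(1-\qintp),
\end{align*}
for which $Q^p = (G^p)'$ and $\widetilde Q^p = (\widetilde G^p)'$ when adjoints are taken with respect to the extended $L^2$ pairing. From the explicit series for $(\qintpz)'$, $(\qintp)'$ and $(B^p)'$ one reads off directly that $Q^p\phi,\widetilde Q^p\phi\in P^p(\TT)$, since every summand is either a weight function $\phi_\star\in P^p(\TT)$ or an element shape function $\eta^{(T)}_\star\in P^p(\TT)$; hence the operators are well defined as maps into $P^p(\TT)$, the remaining content being the $H^{-1}$-, $\widetilde H^{-1}$- and $L^2$-boundedness proved below. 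The key structural identity is $\Pip B^p v = \Pip v$: the support properties of the $\eta^{(T)}_\star$, $\nu^{(T)}_\star$ and the biorthogonality $\ip{\nu^{(T)}_\star}{\eta^{(T)}_\beta}_T = \delta_{\star,\beta}$ yield $\ip{B^p v}{w}_\Omega = \ip{v}{w}_\Omega$ for all $w\in P^p(\TT)$, whence $\Pip G^p = \Pip\qintpz + \Pip(1-\qintpz) = \Pip$, and likewise $\Pip\widetilde G^p = \Pip$. For the projection property, for $w\in P^p(\TT)$ and any $v\in L^2(\Omega)$ one has $\ip{G^p v}{w}_\Omega = \ip{\Pip G^p v}{w}_\Omega = \ip{\Pip v}{w}_\Omega = \ip{v}{w}_\Omega$, so $Q^p w = w$; since $\ran Q^p\subseteq P^p(\TT)$ this gives $(Q^p)^2 = Q^p$, and the same reasoning applies to $\widetilde Q^p$.

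For the stability bounds I would use duality, reducing $\norm{Q^p}{H^{-1}(\Omega)\to H^{-1}(\Omega)}$ to $\norm{G^p}{H_0^1(\Omega)\to H_0^1(\Omega)}$ and $\norm{Q^p}{L^2(\Omega)\to L^2(\Omega)}$ to $\norm{G^p}{L^2(\Omega)\to L^2(\Omega)}$ (and analogously for $\widetilde G^p$ with $H^1(\Omega)$ in place of $H_0^1(\Omega)$). The contribution of $\qintpz$ is controlled by Theorem~\ref{thm:qintpz}(iii)--(iv) after summation over $T\in\TT$ using the finite overlap of the patches $\Patch^{(R)}(T)$ (and the Friedrichs inequality in the $H_0^1$ case), and similarly $\qintp$ via Theorem~\ref{thm:qintp}(iii). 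For the bubble correction, standard scaling on each element gives $\norm{B^p w}{T}\lesssim \norm{w}{T}$ and $\norm{\nabla B^p w}{T}\lesssim h_T^{-1}\norm{w}{T}$; inserting $w = (1-\qintpz)v$ and invoking the first-order estimate $\norm{(1-\qintpz)v}{T}\lesssim h_T\norm{\nabla v}{\Patch^{(R)}(T)}$ of Theorem~\ref{thm:qintpz}(v) produces $\norm{\nabla B^p(1-\qintpz)v}{T}\lesssim \norm{\nabla v}{\Patch^{(R)}(T)}$, and summation closes the $H_0^1$ bound; the $L^2$ bound is analogous and simpler. I expect this cancellation — the $h_T^{-1}$ blow-up of $B^p$ being absorbed by the $O(h_T)$ smallness of $1-\qintpz$ — to be the only genuinely delicate point.

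For the approximation estimate, let $\phi\in L^2(\Omega)$ and $v\in H_0^1(\Omega)$. Since the duality is the extended $L^2$ pairing and $\Pip(1-G^p)v = 0$ by the identity above,
\begin{align*}
  \dual{(1-Q^p)\phi}{v} = \ip{\phi}{(1-G^p)v}_\Omega = \ip{(1-\Pip)\phi}{(1-G^p)v}_\Omega .
\end{align*}
An element-wise Cauchy--Schwarz inequality, the bound $\norm{(1-G^p)v}{T}\lesssim h_T\norm{\nabla v}{\Patch^{(R)}(T)}$ (from Theorem~\ref{thm:qintpz}(v) with $m=1$ together with the $L^2$ bound on $B^p$), and a second Cauchy--Schwarz in the sum over $T$ combined with the finite overlap of patches then give
\begin{align*}
  \dual{(1-Q^p)\phi}{v} \lesssim \Big(\sum_{T\in\TT}\norm{h_\TT(1-\Pip)\phi}{T}^2\Big)^{1/2}\norm{\nabla v}{\Omega} = \norm{h_\TT(1-\Pip)\phi}{\Omega}\,\norm{\nabla v}{\Omega},
\end{align*}
and taking the supremum over $\norm{v}{H_0^1(\Omega)}\le 1$ yields the claimed estimate for $Q^p$. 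The bound for $\widetilde Q^p$ follows the same three steps verbatim, with $\qintpz$ replaced by $\qintp$, $H_0^1(\Omega)$ by $H^1(\Omega)$, $H^{-1}(\Omega)$ by $\widetilde H^{-1}(\Omega)$, and Theorem~\ref{thm:qintpz} by Theorem~\ref{thm:qintp}; in particular $\Pip\widetilde G^p = \Pip$ again guarantees $(1-\widetilde G^p)v\perp P^p(\TT)$ in $L^2(\Omega)$.
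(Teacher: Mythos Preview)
Your proof is correct and follows essentially the same approach as the paper: both set $P=G^p=\qintpz+B^p(1-\qintpz)$, derive local $L^2$ and $H_0^1$ stability of $P$ from the scaling of $B^p$ combined with Theorem~\ref{thm:qintpz}(iii)--(v), and pass to $Q^p=P'$ by duality. The only cosmetic difference is in the approximation estimate: you insert $(1-\Pip)$ through the orthogonality $\Pip(1-G^p)v=0$ before Cauchy--Schwarz, whereas the paper first proves $\norm{(1-Q^p)\phi}{H^{-1}(\Omega)}\lesssim\norm{h_\TT\phi}\Omega$ and then invokes the projection identity $(1-Q^p)\phi=(1-Q^p)(1-\Pip)\phi$; these are equivalent manipulations of the same structural fact.
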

\begin{proof}
  We start by noting that
  \begin{align*}
    \norm{B^p v}T \lesssim \sum_{\star\in\II_{p,T}} \norm{v}T\norm{\eta_\star}T\norm{\nu_\star}T \lesssim \norm{v}T \quad\forall T\in\TT,\forall v\in L^2(\Omega).
  \end{align*}
  In particular, $\norm{B^p}{L^2(\Omega)\to L^2(\Omega)} \lesssim 1$. Set $P = \qintpz + B^p(1-\qintpz)$. Then,
  \begin{align*}
    \norm{Pv}{T} \lesssim \norm{\qintpz v}T + \norm{(1-\qintpz)v}T \lesssim \norm{v}{\Patch^{(R)}(T)}
  \end{align*}
  by Theorem~\ref{thm:qintpz}. With the same arguments, an inverse inequality, and the approximation property from Theorem~\ref{thm:qintpz} we see that
  \begin{align*}
    \norm{\nabla Pv}T \lesssim \norm{\nabla \qintpz v}T + h_T^{-1} \norm{(1-\qintpz) v}T \lesssim \norm{\nabla v}{\Patch^{(R)}(T)}.
  \end{align*}
  Summing over all $T\in\TT$, the last estimates prove $P\colon L^2(\Omega)\to L^2(\Omega)$ and $P\colon H_0^1(\Omega)\to H_0^1(\Omega)$ are bounded in the respective norms. Therefore, the adjoint $Q^p = P'$ is bounded in the dual norms. To see that $Q^p$ is a projection, observe that by construction $\ip{B^p(1-\qintpz)v}{\eta^{(T)}_\star}_T = \ip{(1-\qintpz)v}{\eta^{(T)}_\star}_T$ for all $\star\in \II_{p,T}$, $T\in\TT$, $v\in H_0^1(\Omega)$. Thus, for any $\phi\in P^p(\TT)$ we have that
  \begin{align*}
    \ip{Q^p \phi}v_\Omega = \ip{\phi}{P v}_\Omega = \ip{\phi}{\qintpz v + B^p(1-\qintpz)v}_\Omega = \ip{\phi}v_\Omega \quad\forall v\in H_0^1(\Omega).
  \end{align*}
  For the proof of the approximation property note that for $v\in H_0^1(\Omega)$ the same arguments as we have used to prove the boundedness of $P$ show that
  \begin{align*}
    \norm{(1-P)v}{\Omega} \lesssim \norm{h_\TT\nabla v}{\Omega}. 
  \end{align*}
  Consequently, for  $\phi\in L^2(\Omega)$ 
  \begin{align*}
    \norm{(1-Q^p)\phi}{H^{-1}(\Omega)} = \sup_{v\in H_0^1(\Omega)\setminus\{0\}} \frac{\ip{(1-Q^p)\phi}{v}_\Omega}{\norm{\nabla v}{\Omega}} = \sup_{v\in H_0^1(\Omega)\setminus\{0\}} \frac{\ip{\phi}{(1-P)v}_\Omega}{\norm{\nabla v}{\Omega}} \lesssim \norm{h_\TT\phi}\Omega.
  \end{align*}
  Considering $(1-Q^p)\phi = (1-Q^p)(1-\Pip)\phi$ in the last estimate proves the asserted approximation property.
  
  Finally, the estimates for $\widetilde Q^p$ follow the same lines of proof and we omit further details.
\end{proof}

\begin{remark}
  We note that local estimates of the form
  \begin{align*}
    \norm{Q^p\phi}{T} \lesssim \norm{\phi}{\Patch^{(R)}(T)}, \quad 
    \norm{Q^p\phi}{H^{-1}(T)} \lesssim \norm{\phi}{H^{-1}(\Patch^{(R)}(T))}
  \end{align*}
  for $\phi\in L^2(\Omega)$, $T\in\TT$, hold true as well, see~\cite[Theorem~8]{MultilevelNorms21} for the case $p=0$.
  \qed
\end{remark}

\begin{remark}
  Theorem~\ref{thm:projHm1} holds true when $\qintpz$ and $\qintp$ are replaced by $\qintzpz$ and $\qintzp$ in the definition of $Q^p$ and $\widetilde Q^p$, respectively.
  \qed
\end{remark}

\section{Numerical experiments}\label{sec:numeric}
In this section, we present numerical tests of some quasi-interpolation operators defined in this work. In Section~\ref{sec:numeric:convergence} we numerically verify approximation properties of operators $\qintpz$ and $\qintzpz$.
Section~\ref{sec:numeric:mixed} and~\ref{sec:numeric:hdg} deal with our proposed applications to postprocessing in mixed FEM and HDG methods, respectively.
Slopes of triangles with respect to the mesh-size paramater $h$ in the figures of this section are indicated with numbers $\alpha$ where $\OO( (\#\TT)^{-\alpha/2}) = \OO(h^\alpha)$ for uniform meshes (all elements have comparable diameters).

\subsection{Test of approximation properties}\label{sec:numeric:convergence}
Let $\Omega = (0,1)^2$ and consider the smooth function $u(x,y) = \sin(\pi x)\sin(\pi y)$ with $u|_{\partial\Omega} = 0$.
By Theorems~\ref{thm:qintpz} and~\ref{thm:qintzpz} we expect that
\begin{align*}
  \norm{u-\qintpz u}\Omega + \norm{u-\qintzpz u}\Omega = \OO(h^{p+2})
\end{align*}
which is observed in our experiments, see Figure~\ref{fig:smooth}.
Note that for $p=0$ both operators coincide. For $p=1$ we find that the absolute values of the errors are quite similar, whereas for $p=2,3$ the error curves are shifted, i.e., the absolute errors for $p=2,3$ and operator $\qintzpz$ are higher than for operator $\qintpz$, though convergence rates are of optimal order in both cases. 
This might have something to do with the fact that for the definition of $\qintzpz$ we use larger patches compared to $\qintpz$ to define the weight functions. 
Here, we choose patches of order $p+1$, e.g., $\tPatch_z = \Patch^{(p+1)}(\{z\})$, to define the piecewise constant weight functions.
Thus, the supports of the weight functions have a larger overlap, which affects constants in the proofs of our main results.
If we choose patches of order $p$ for $p=2,3$, e.g., $\tPatch_z = \Patch^{(p)}(\{z\})$, to define the weight functions, smaller errors are observed, see Figure~\ref{fig:smoothPatches}. 

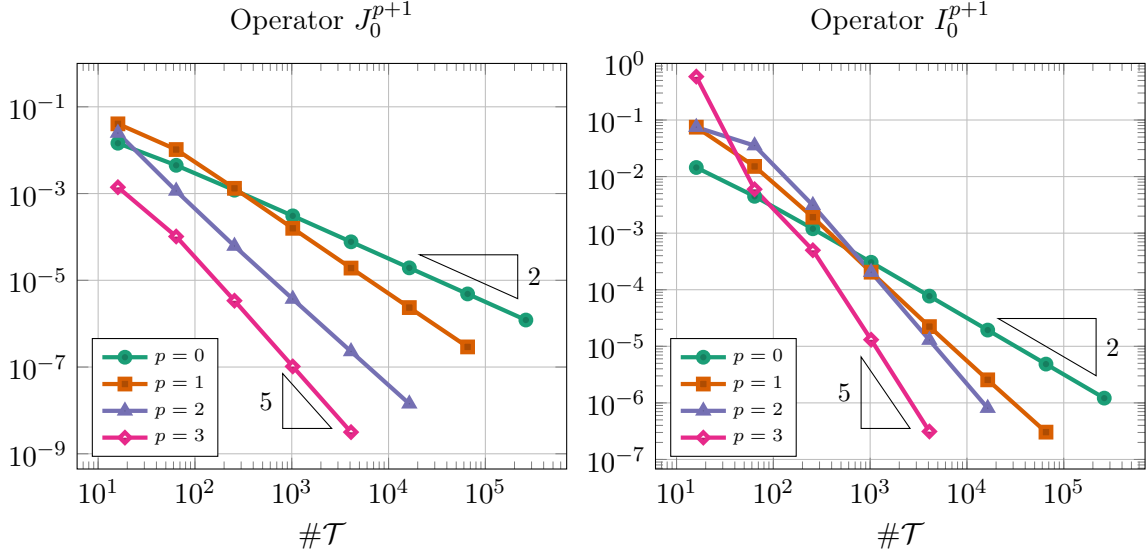
\begin{figure}
  \begin{tikzpicture}
    \begin{loglogaxis}[
        title={Operator $\qintpz$},
        width=0.49\textwidth,
        cycle list/Dark2-6,
        cycle multiindex* list={
          mark list*\nextlist
        Dark2-6\nextlist},
        every axis plot/.append style={ultra thick},
        xlabel={$\#\TT$},
        grid=major,
        legend entries={\tiny $p=0$,\tiny $p=1$,\tiny $p=2$,\tiny $p=3$},
        legend pos=south west,
        ymax=1,
      ]

      \addplot table [x=nE,y=errL2] {data/ExampleSmoothP0.dat};
      \addplot table [x=nE,y=errL2] {data/ExampleSmoothP1.dat};
      \addplot table [x=nE,y=errL2] {data/ExampleSmoothP2.dat};
      \addplot table [x=nE,y=errL2] {data/ExampleSmoothP3.dat};

      \logLogSlopeTriangle{0.9}{0.2}{0.42}{1}{black}{{\small $2$}};
      \logLogSlopeTriangleBelow{0.52}{0.1}{0.1}{5/2}{black}{{\small $5$}}
    \end{loglogaxis}
  \end{tikzpicture}
  \begin{tikzpicture}
    \begin{loglogaxis}[
        title={Operator $\qintzpz$},
        width=0.49\textwidth,
        cycle list/Dark2-6,
        cycle multiindex* list={
          mark list*\nextlist
        Dark2-6\nextlist},
        every axis plot/.append style={ultra thick},
        xlabel={$\#\TT$},
        grid=major,
        legend entries={\tiny $p=0$,\tiny $p=1$,\tiny $p=2$,\tiny $p=3$},
        legend pos=south west,
        ymax=1,
      ]

      \addplot table [x=nE,y=errL2P0] {data/ExampleSmoothP0.dat};
      \addplot table [x=nE,y=errL2P0] {data/ExampleSmoothP1.dat};
      \addplot table [x=nE,y=errL2P0] {data/ExampleSmoothP2.dat};
      \addplot table [x=nE,y=errL2P0] {data/ExampleSmoothP3.dat};

      \logLogSlopeTriangle{0.9}{0.2}{0.23}{1}{black}{{\small $2$}};
      \logLogSlopeTriangleBelow{0.52}{0.1}{0.1}{5/2}{black}{{\small $5$}}
    \end{loglogaxis}
  \end{tikzpicture}
  \caption{Errors $\norm{u-\qintpz u}\Omega$ (left) and $\norm{u-\qintzpz u}\Omega$ (right) with $u(x,y)=\sin(\pi x)\sin(\pi y)$ and domain $\Omega = (0,1)^2$.}\label{fig:smooth}
\end{figure}

\begin{figure}
  \begin{tikzpicture}
    \begin{loglogaxis}[
        title={$p=2$},
        width=0.49\textwidth,
        cycle list/Dark2-6,
        cycle multiindex* list={
          mark list*\nextlist
        Dark2-6\nextlist},
        every axis plot/.append style={ultra thick},
        xlabel={$\#\TT$},
        grid=major,
        legend entries={\tiny larger patch,\tiny smaller patch},
        legend pos=south west,
        ymax=1,
      ]

      \addplot table [x=nE,y=errL2large] {data/ExampleSmoothPatchP2.dat};
      \addplot table [x=nE,y=errL2small] {data/ExampleSmoothPatchP2.dat};

      \logLogSlopeTriangleBelow{0.85}{0.2}{0.1}{2}{black}{{\small $4$}}
    \end{loglogaxis}
  \end{tikzpicture}
  \begin{tikzpicture}
    \begin{loglogaxis}[
        title={$p=3$},
        width=0.49\textwidth,
        cycle list/Dark2-6,
        cycle multiindex* list={
          mark list*\nextlist
        Dark2-6\nextlist},
        every axis plot/.append style={ultra thick},
        xlabel={$\#\TT$},
        grid=major,
        legend entries={\tiny larger patch,\tiny smaller patch},
        legend pos=south west,
        ymax=1,
      ]

      \addplot table [x=nE,y=errL2large] {data/ExampleSmoothPatchP3.dat};
      \addplot table [x=nE,y=errL2small] {data/ExampleSmoothPatchP3.dat};

      \logLogSlopeTriangleBelow{0.85}{0.2}{0.1}{5/2}{black}{{\small $5$}}
    \end{loglogaxis}
  \end{tikzpicture}
  \caption{Errors $\norm{u-\qintzpz u}\Omega$ with $u(x,y)=\sin(\pi x)\sin(\pi y)$ and domain $\Omega = (0,1)^2$ for $p=2,3$ when using different patch sizes for the definition of the weight functions. For the larger patch a $(p+1)$-order patch is used, for the smaller one a $p$-order patch.}\label{fig:smoothPatches}
\end{figure}
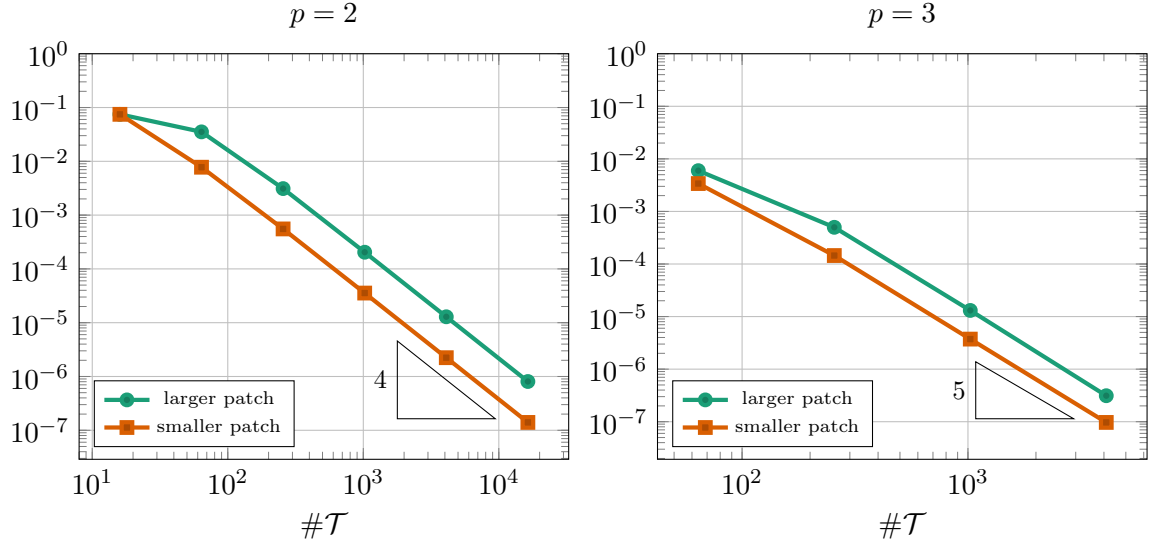

\subsection{Accuracy enhancement for mixed FEM}\label{sec:numeric:mixed}
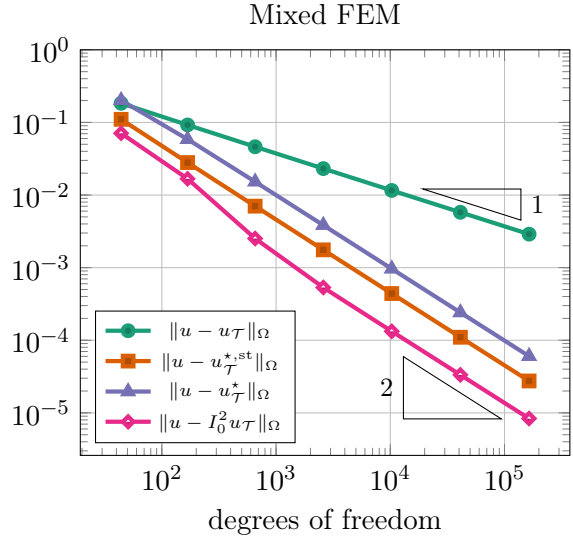
\begin{figure}
  \begin{tikzpicture}
    \begin{loglogaxis}[
        title={Mixed FEM},
        width=0.49\textwidth,
        cycle list/Dark2-6,
        cycle multiindex* list={
          mark list*\nextlist
        Dark2-6\nextlist},
        every axis plot/.append style={ultra thick},
        xlabel={degrees of freedom},
        grid=major,
        legend entries={\tiny $\norm{u-u_\TT}\Omega$,\tiny $\norm{u-u_\TT^{\star,\mathrm{st}}}\Omega$,\tiny $\norm{u-u_\TT^{\star}}\Omega$,\tiny $\norm{u-I_0^2u_\TT}\Omega$},
        legend pos=south west,
        ymax=1,
      ]

      \addplot table [x=dofMIXED,y=errUL2] {data/ExampleMixed.dat};
      \addplot table [x=dofMIXED,y=errUstarST] {data/ExampleMixed.dat};
      \addplot table [x=dofMIXED,y=errUstar] {data/ExampleMixed.dat};
      \addplot table [x=dofMIXED,y=errUstar2] {data/ExampleMixed.dat};

      \logLogSlopeTriangle{0.9}{0.2}{0.58}{1/2}{black}{{\small $1$}};
      \logLogSlopeTriangleBelow{0.86}{0.2}{0.09}{1}{black}{{\small $2$}}
    \end{loglogaxis}
  \end{tikzpicture}
  \caption{Errors of approximation and postprocessed solution for the mixed FEM described in Section~\ref{sec:numeric:mixed}.}\label{fig:mixed}
\end{figure}
We consider the mixed FEM as described in Section~\ref{sec:appl:comp} for the lowest-order case $p=0$ and $\Omega=(0,1)^2$. The manufactured solution $u(x,y) = \sin(\pi x)\sin(\pi y)$ is used. 
Using the same notation for the postprocessed solutions we find that
\begin{align*}
  \norm{u-u_\TT^\star}\Omega + \norm{u-u_\TT^{\star,\mathrm{st}}}\Omega = \OO(h^2),
\end{align*}
see Figure~\ref{fig:mixed}. We observe that the values of $\norm{u-u_\TT^{\star,\mathrm{st}}}\Omega$ are smaller than those of $\norm{u-u_\TT^\star}\Omega$. Additionally, we consider another postprocessing $I_0^2u_\TT$ which maps the piecewise constant approximation $u_\TT$ to the space $P_{c,0}^2(\TT)$. From Figure~\ref{fig:mixed} we see that this postprocessed solution converges also at $\OO(h^2)$ and the error $\norm{u-I_0^2u_\TT}\Omega$ is even smaller than $\norm{u-u_\TT^{\star,\mathrm{st}}}\Omega$.
This might be explained using the properties of $I_0^2$ from Theorem~\ref{thm:qintzpz} which yield
\begin{align*}
  \norm{u-I_0^2u_\TT}\Omega \leq \norm{u-I_0^2u}\Omega + \norm{I_0^2(u-u_\TT)}\Omega 
  \lesssim C_1 h^3 \norm{u}{H^3(\Omega)} + C_2 \norm{\Pi^0 u - u_\TT}\Omega = \OO(h^2).
\end{align*}
We see that the term containing the factor $h^3$ converges at a higher rate so that the error depends asymptotically only on $C_2 \norm{\Pi^0 u - u_\TT}\Omega$.

\subsection{Accuracy enhancement for HDG}\label{sec:numeric:hdg}
In the final numerical experiment, we consider the HDG method from Section~\ref{sec:appl:hdg}. We use again $\Omega = (0,1)^2$ and the manufactured solution $u(x,y) = \sin(\pi x)\sin(\pi y)$. We apply operator $\qintzpz$ to the HDG solution component $u_\TT$ to obtain a postprocessed solution. Figure~\ref{fig:hdg} visualizes the errors $\|u-\qintzpz u_\TT\|_\Omega$ for $p=1,2,3$.
Again we observe optimal algebraic rates, i.e., $\|u-\qintzpz u_\TT\|_\Omega = \OO(h^{p+2})$.
For the cases $p=2,3$ we use smaller patches to define $\qintzpz$ as described in Section~\ref{sec:numeric:convergence} above. 

\begin{figure}
  \begin{tikzpicture}
    \begin{loglogaxis}[
        title={Results for HDG},
        width=0.49\textwidth,
        cycle list/Dark2-6,
        cycle multiindex* list={
          mark list*\nextlist
        Dark2-6\nextlist},
        every axis plot/.append style={ultra thick},
        xlabel={$\#\TT$},
        grid=major,
        legend entries={\tiny $p=1$,\tiny $p=2$,\tiny $p=3$},
        legend pos=south west,
        ymax=1,
      ]

      \addplot table [x=nE,y=errL2small] {data/ExampleHDGP1.dat};
      \addplot table [x=nE,y=errL2small] {data/ExampleHDGP2.dat};
      \addplot table [x=nE,y=errL2small] {data/ExampleHDGP3.dat};

      \logLogSlopeTriangle{0.9}{0.2}{0.45}{3/2}{black}{{\small $3$}};
      \logLogSlopeTriangleBelow{0.85}{0.2}{0.1}{5/2}{black}{{\small $5$}}
    \end{loglogaxis}
  \end{tikzpicture}
  \caption{Errors $\|u-\qintzpz u_\TT\|_\Omega$ for $p=1,2,3$ where $u_\TT$ is the solution of the HDG method. Here, we use smaller patches for $p=2,3$ see comments and discussion in Section~\ref{sec:numeric:convergence}.}\label{fig:hdg}
\end{figure}
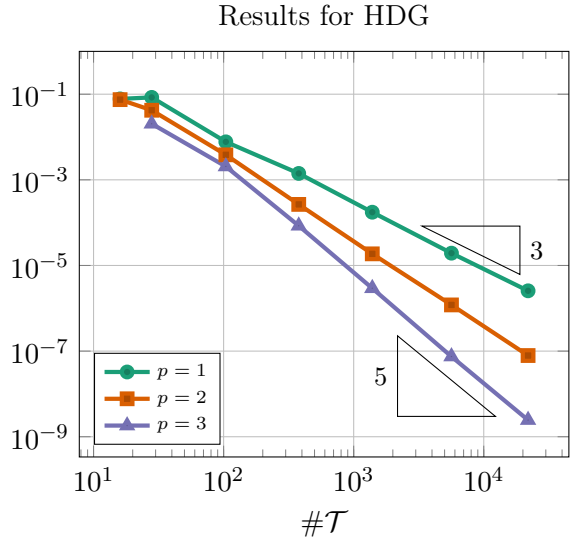

\section{Conclusion}\label{sec:conclusion}
In this work, we presented a novel family of quasi-interpolators into continuous piecewise polynomials of degree $\leq p+1$ that are based on polynomial weight functions of degree $p$. 
They are locally defined and have optimal approximation properties. The existence of the weight functions, which we verified for 1D and 2D, is critical in the construction. In the three-dimensional case, the same principal ideas in the construction of such operators apply, but the proof of the existence of corresponding weight functions remains open.
Further, we designed quasi-interpolators with piecewise constant weight functions that have similar properties; though, proof of the weight functions' existence is much more involved.

The operators can enhance the accuracy of finite element solutions without employing gradient approximations, contrary to other postprocessing techniques.
The operators can be implemented in a finite element software library and then be used for various different problems as we have exemplified in Section~\ref{sec:appl} and~\ref{sec:numeric}.

%===================================================================================================
\bibliographystyle{alpha}
\bibliography{literature}

\begin{thebibliography}{FHK22}

\bibitem[BBF13]{BoffiBrezziFortin}
Daniele Boffi, Franco Brezzi, and Michel Fortin.
\newblock {\em Mixed finite element methods and applications}, volume~44 of
  {\em Springer Series in Computational Mathematics}.
\newblock Springer, Heidelberg, 2013.

\bibitem[BG98]{BernardiGirault98}
C.~Bernardi and V.~Girault.
\newblock A local regularization operator for triangular and quadrilateral
  finite elements.
\newblock {\em SIAM J. Numer. Anal.}, 35(5):1893--1916, 1998.

\bibitem[BPS13]{BankParsaniaSauter13}
Randolph~E. Bank, Asieh Parsania, and Stefan Sauter.
\newblock Saturation estimates for {$hp$}-finite element methods.
\newblock {\em Comput. Vis. Sci.}, 16(5):195--217, 2013.

\bibitem[BS08]{BrennerScott}
Susanne~C. Brenner and L.~Ridgway Scott.
\newblock {\em The mathematical theory of finite element methods}, volume~15 of
  {\em Texts in Applied Mathematics}.
\newblock Springer, New York, third edition, 2008.

\bibitem[BS22]{SupConvDPGelasticity}
Fleurianne Bertrand and Henrik Schneider.
\newblock Superconvergence of discontinuous petrov-galerkin approximations in
  linear elasticity.
\newblock {\em arXiv}, arXiv:2209.08859, 2022.

\bibitem[BX89]{BrambleXu89}
James~H. Bramble and Jinchao Xu.
\newblock A local post-processing technique for improving the accuracy in mixed
  finite-element approximations.
\newblock {\em SIAM J. Numer. Anal.}, 26(6):1267--1275, 1989.

\bibitem[Car99]{CarstensenQuasiInt99}
Carsten Carstensen.
\newblock Quasi-interpolation and a posteriori error analysis in finite element
  methods.
\newblock {\em M2AN Math. Model. Numer. Anal.}, 33(6):1187--1202, 1999.

\bibitem[CGS10]{CockburnGopalakrishnanSayas10}
Bernardo Cockburn, Jayadeep Gopalakrishnan, and Francisco-Javier Sayas.
\newblock A projection-based error analysis of {HDG} methods.
\newblock {\em Math. Comp.}, 79(271):1351--1367, 2010.

\bibitem[Cl{\'{e}}75]{Clement75}
Ph. Cl{\'{e}}ment.
\newblock Approximation by finite element functions using local regularization.
\newblock {\em Rev. Fran\c{c}aise Automat. Informat. Recherche
  Op\'{e}rationnelle S\'{e}r.}, 9({\rm R}-2):77--84, 1975.

\bibitem[DST23]{DieningStornTscherpel23}
Lars Diening, Johannes Storn, and Tabea Tscherpel.
\newblock Interpolation operator on negative {S}obolev spaces.
\newblock {\em Math. Comp.}, 92(342):1511--1541, 2023.

\bibitem[EG17]{ErnGuermond17}
Alexandre Ern and Jean-Luc Guermond.
\newblock Finite element quasi-interpolation and best approximation.
\newblock {\em ESAIM Math. Model. Numer. Anal.}, 51(4):1367--1385, 2017.

\bibitem[FHK22]{FHK22}
Thomas F\"{u}hrer, Norbert Heuer, and Michael Karkulik.
\newblock M{INRES} for second-order {PDE}s with singular data.
\newblock {\em SIAM J. Numer. Anal.}, 60(3):1111--1135, 2022.

\bibitem[F{\"{u}}h19]{SupConv2}
Thomas F{\"{u}}hrer.
\newblock Superconvergent {DPG} methods for second-order elliptic problems.
\newblock {\em Comput. Methods Appl. Math.}, 19(3):483--502, 2019.

\bibitem[F{\"{u}}h21]{MultilevelNorms21}
Thomas F{\"{u}}hrer.
\newblock Multilevel decompositions and norms for negative order {S}obolev
  spaces.
\newblock {\em Math. Comp.}, 91(333):183--218, 2021.

\bibitem[F{\"{u}}h23]{MixedFEMHm1}
Thomas F{\"{u}}hrer.
\newblock On a mixed {FEM} and a {FOSLS} with {$H^{-1}$} loads.
\newblock {\em Comput. Methods Appl. Math.}, published online, 2023.

\bibitem[KS15]{KoornwinderSauter15}
Tom~H. Koornwinder and Stefan~A. Sauter.
\newblock The intersection of bivariate orthogonal polynomials on triangle
  patches.
\newblock {\em Math. Comp.}, 84(294):1795--1812, 2015.

\bibitem[Mel05]{Melenk05}
J.~M. Melenk.
\newblock {$hp$}-interpolation of nonsmooth functions and an application to
  {$hp$}-a posteriori error estimation.
\newblock {\em SIAM J. Numer. Anal.}, 43(1):127--155, 2005.

\bibitem[Osw93]{Oswald93}
P.~Oswald.
\newblock On a {BPX}-preconditioner for {${\rm P}1$} elements.
\newblock {\em Computing}, 51(2):125--133, 1993.

\bibitem[{\v{S}}SD04]{HighOrderFEM}
Pavel {\v{S}}ol{\'{\i}}n, Karel Segeth, and Ivo Dole\v{z}el.
\newblock {\em Higher-order finite element methods}.
\newblock Studies in Advanced Mathematics. Chapman \& Hall/CRC, Boca Raton, FL,
  2004.
\newblock With 1 CD-ROM (Windows, Macintosh, UNIX and LINUX).

\bibitem[Ste91]{Stenberg91}
Rolf Stenberg.
\newblock Postprocessing schemes for some mixed finite elements.
\newblock {\em RAIRO Mod\'{e}l. Math. Anal. Num\'{e}r.}, 25(1):151--167, 1991.

\bibitem[SvV20]{StevensonVanVenetie20}
Rob Stevenson and Raymond van Veneti\"{e}.
\newblock Uniform preconditioners for problems of negative order.
\newblock {\em Math. Comp.}, 89(322):645--674, 2020.

\bibitem[SZ90]{SZ_90}
L.~Ridgway Scott and Shangyou Zhang.
\newblock Finite element interpolation of nonsmooth functions satisfying
  boundary conditions.
\newblock {\em Math. Comp.}, 54(190):483--493, 1990.

\bibitem[VZ18]{VeeserZanotti18}
Andreas Veeser and Pietro Zanotti.
\newblock Quasi-optimal nonconforming methods for symmetric elliptic problems.
  {I}---{A}bstract theory.
\newblock {\em SIAM J. Numer. Anal.}, 56(3):1621--1642, 2018.

\end{thebibliography}
%===================================================================================================

%\newpage

\appendix
\section{Proof of Lemma~\ref{lem:fullrank} for two dimensions}\label{sec:proof2d}
Lemma~\ref{lem:fullrank} for $d=2$ has been verified in~\cite[Theorem~1.1]{KoornwinderSauter15} using a weighted inner product on each $T\in\TT$, i.e., 
\begin{align*}
  \ip{u}v_{T,w_T} = \int_T uv\, w_T^\alpha\,\di x, \quad w_T = \prod_{z\in\VV_T}\eta_z|_T, \quad \alpha = 1.
\end{align*}
Here, we adopt the proof of~\cite[Theorem~1.1]{KoornwinderSauter15} for $\alpha = 0$, i.e., we consider the canonic inner product.
We also adopt some notation, i.e., we shall use $n=p+1$. 
We stress that the case $n=1$ has been covered in~\cite[Section~5]{KoornwinderSauter15} for finite measures invariant under affine transformations. This includes, in particular, our situation with $\alpha = 0$.
For the remainder of this section, we can thus assume $n\geq 2$.

\subsection{Main ideas of proof}
Let $z\in \VV_0$ be given. 
We consider $q\in P^{n}(\Omega_z)$ with $\ip{q}{v}_{\Omega_z} = 0$ for all $v\in P^{n-1}(\omega_z)$. We want to show that this implies $q=0$ which is equivalent to the assertion of Lemma~\ref{lem:fullrank}.
The main tool in the proof is the use of an appropriate representation of orthogonal polynomials on triangles. 
Employing an affine mapping we can assume w.l.o.g. that $\Tref \in \patch_z$.
Considering an element $T\in\patch_z$ adjacent to $\Tref$ we prove in a first step (Section~\ref{sec:appendix:adj}) that
\begin{align*}
  \ip{q}{v}_{\Tref} = 0 \quad\text{for all } v\in P^{n-1}(\Tref) \quad\text{and}\quad \ip{q}{w}_T = 0 \quad\text{for all } w\in P^{n-1}(T)
\end{align*}
implies that $q =0$ or that $q$ lives in a one-dimensional subspace for some exceptional cases.
By considering elements in $\patch_z$ and resolving the different exceptional cases we conclude that $q=0$ (Section~\ref{sec:appendix:conclusion}).

\subsection{Orthogonal polynomials}
The bivariate polynomials
\begin{align*}
  P_{n,k}(x,y) = P_{n-k}^{(0,2k+1)}(1-2x) (1-x)^k P_k^{(0,0)}(1-\tfrac{2y}{1-x})
\end{align*}
for $k=0,\dots,n$ are of degree $n\in\N_0$ and orthogonal on the reference triangle $\Tref$, i.e.,
\begin{align*}
  \ip{P_{n,k}}{P_{m,\ell}}_{\Tref} = 0 \quad (n,k)\neq (m,\ell).
\end{align*}
Here, $P_k^{(\alpha,\beta)}$ denote the Jacobi polynomials on $(-1,1)$ with $\alpha,\beta>-1$, i.e., 
\begin{align*}
  P_k^{(\alpha,\beta)}(1-2x) = \frac{(\alpha+1)_k}{k!}\sum_{j=0}^k \frac{(-n)_j(n+\alpha+\beta+1)_j}{(\alpha+1)_j j!}x^j,
\end{align*}
where $(z)_j$ for $j\in\N_{0}$ is the Pochhammer symbol given by $(z)_{j} = \prod_{k=0}^{j-1}(z+k)$ if $j>0$ and $(z)_0 = 1$.

We note that any affine transformation from an element $T\in\TT$ to $\Tref$, say, $G_T\colon T\to \Tref$, preserves the orthogonality relations, i.e., $\ip{P_{n,k}\circ G_T}{P_{m,\ell}\circ G_T}_{T} = 0$ if $(n,k)\neq (m,\ell)$. 

The orthogonal complement
\begin{align*}
  P^{n-1,n,\perp}(\Tref) = \set{q\in P^n(\Tref)}{\ip{q}r_{\Tref}=0 \, \forall r\in P^{n-1}(\Tref)}
\end{align*}
is spanned by the polynomials $P_{n,k}$, $k=0,\dots,n$. Considering the affine map that takes $(0,0)\to (0,0)$, $(1,0)$ to $(0,1)$ and $(0,1)$ to $(1,0)$ we find that the polynomials $p_{n,k}(x,y) = P_{n,k}(y,x)$, $k=0,\dots,n$ span the space $P^{n-1,n,\perp}(\Tref)$.
Setting $k=0$ we also see by similar considerations that the three bivariate polynomials
\begin{align*}
  P_n^{(0,1)}(1-2x), \quad P_n^{(0,1)}(1-2y), \quad P_n^{(0,1)}(2(x+y)-1)
\end{align*}
are elements of $P^{n-1,n,\perp}(\Tref)$, see also~\cite[Eq.(2.4) and (2.5)]{KoornwinderSauter15}.

We use the series representation
\begin{align}\label{eq:ortpol:ser}
\begin{split}
  p_{n,k}(x,y) &= \sum_{m=0}^{n-k} \frac{(k-n)_m(n+k+2)_m}{(m!)^2} y^m \sum_{j=0}^k \frac{(-k)_j(k+1)_j}{(j!)^2} x^j(1-y)^{k-j}.
\end{split}
\end{align}

\subsection{Two adjacent triangles}\label{sec:appendix:adj}
Let $T = \conv\{(1,0),(0,0),(-\tfrac{c}{d-c},\tfrac1{d-c})\}$ be an adjacent triangle to $\Tref$ with $d-c<0$ (otherwise $|\Tref\cap  T|>0$).
Take $q$ with $q|_{\Tref}\in P^{n-1,n,\perp}(\Tref)$ and $q|_T\in P^{n-1,n,\perp}(T)$. Then, 
\begin{align*}
  q|_{\Tref}(x,y) = \sum_{k=0}^n \alpha_k p_{n,k}(x,y) \quad\text{and}\quad
  q|_{T}(x,y) = \sum_{k=0}^n \beta_k p_{n,k}(x+cy,(d-c)y).
\end{align*}
Therefore, 
\begin{align}\label{eq:twotri:a}
  \sum_{k=0}^n (\alpha_k p_{n,k}(x,y)-\beta_k p_{n,k}(x+cy,(d-c)y)) = 0 \quad\text{for all } (x,y)\in\R^2.
\end{align}
Comparing the coefficients of the monomials $(x,y)\mapsto x^j$ for $j=0,\dots,n$ gives $\alpha_j=\beta_j$ for $j=0,\dots,n$.
Following along the lines of~\cite[Proof of Theorem~3.1]{KoornwinderSauter15}, particularly,~\cite[Eq.(3.9) and Eq.(14)]{KoornwinderSauter15} we find that the coefficients of the monomials $(x,y)\mapsto x^ry^m$ in~\eqref{eq:twotri:a} are given by the equations
\begin{align}\label{eq:twotri:b}
  \frac{1}{r!}\sum_{k=r}^n\alpha_kf_{k,r,m}(c,d) = 0 
\end{align}
for $r=0,\dots,n-1$, $m=1,\dots,n$, $r+m\leq n$ and 
\begin{align*}
  f_{k,r,m}(c,d) &= \sum_{i=\max(0,m-k+r)}^{\min(m,n-k)} \frac{(-n+k)_i(n+k+2)_i}{(i!)^2} \frac{(-k)_r(k+1)_r(r-k)_{m-i}}{r!(m-i)!}(1-(d-c)^m)
  \\
  &\quad - \sum_{i=\max(0,m-k+r)}^{\min(m-1,n-k)} \frac{(-n+k)_i(n+k+2)_i}{(i!)^2}
  \\&\qquad \times
  \sum_{j=r+1}^{\min(k,m+r-i)} \frac{(-k)_j(k+1)_j(j-k)_{m+r-i-j}}{j!(j-r)!(m+r-i-j)!}c^{j-r}(d-c)^{m+r-j}.
\end{align*}
Considering $r=n-j$, $j=1,\dots,n$, $m=1$ we rewrite~\eqref{eq:twotri:b} to see 
\begin{align*}
  \alpha_{n-j} f_{n-j,n-j,1}(c,d) = -\sum_{k=n-j+1}^n \alpha_{k} f_{k,n-j,1}(c,d),
\end{align*}
where
\begin{align*}
  f_{n-j,n-j,1}(c,d) = \frac{(-j)(2n-j+1)(-n+j)_{n-j}(n-j+1)_{n-j}}{(n-j)!}(1-(d-c)).
\end{align*}
Note that by our assumption we have that $d-c\neq 1$. Then, $f_{n-j,n-j,1}\neq 0$ and $\alpha_{n-j}$ is determined by $\alpha_{n-j+1}$, \dots, $\alpha_n$. Therefore, the solution space has dimension at most $1$.

We now consider $(r,m) = (n-1,1)$, $(n-2,1)$, $(n-2,2)$ in~\eqref{eq:twotri:b} which gives the $3\times 3$ system
\begin{align}\label{eq:sys3t3}
\begin{split}
  \alpha_n f_{n,n-1,1} + \alpha_{n-1}f_{n-1,n-1,1} &= 0, \\
  \alpha_n f_{n,n-2,1} + \alpha_{n-1}f_{n-1,n-2,1} + \alpha_{n-2}f_{n-2,n-2,1} &= 0, \\
  \alpha_n f_{n,n-2,2} + \alpha_{n-1}f_{n-1,n-2,2} + \alpha_{n-2} f_{n-2,n-2,2} &= 0.
\end{split}
\end{align}
The determinant of the system is
\begin{align*}
  C_n c(d-1)(1-(d-c))(1+(d-c))
\end{align*}
with $C_n\neq 0$ for all $n\geq 2$.
It vanishes only if $d-c=-1$, $c=0$, $d=1$. Consequently, if $c\neq 0$, $d\neq 1$ and $d-c\neq -1$ then, $q=0$. 
It is only necessary to study the following exceptional cases where the dimension of the solution space is possibly one.

\noindent
\textbf{Case $c=0$:}
By our aforegoing considerations we have that $P_n^{(0,1)}(1-2x)$
is an element of $P^{n-1,n,\perp}(\Tref)$ and  $P_n^{(0,1)}(1-2(x+cy))$ is an element of $P^{n-1,n,\perp}(T)$. 
If $c=0$ we conclude that
\begin{align*}
  q\in \linhull\{P_n^{(0,1)}(1-2x)\}
\end{align*}
Since the dimension space is at most $1$ this finishes the case $c=0$.

\noindent
\textbf{Case $d=1$:}
Arguing similar as in case $c=0$ we find for $d=1$ that
\begin{align*}
  q \in \linhull\{P_n^{(0,1)}(2(x+y)-1)\}
\end{align*}

\noindent
\textbf{Case $n=2$, $d=c-1$:}
One verifies that 
\begin{align*}
  q \in \linhull\{q_2\}
\end{align*}
with $q_2 = 40p_{2,2} + 24(c-1)p_{2,1} + (3c^2-6c+8)p_{2,0}$.

\noindent
\textbf{Case $n>2$, $d-c=-1$:}
Note that $(1-(d-c)^2) = 0$. Taking the first two equations of~\eqref{eq:sys3t3} we may express $\alpha_{n-1}$ and $\alpha_{n-2}$ in terms of $\alpha_n$. 
Replacing $\alpha_{n-1}$ and $\alpha_{n-2}$ in~\eqref{eq:twotri:b} for $(r,m) = (n-3,1), (n-3,3)$ we obtain a two times two homogeneous system in the unknowns $\alpha_n$ and $\alpha_{n-3}$. Its determinant vanishes if $c=0,1,2$.
Thus, if $c\neq 0,1,2$ we conclude that $q=0$. The case $c=2$ implies that $d = 1$ which, as the case $c=0$, has been handled above. 
We may thus restrict to the case $c=1$ which implies that $d=0$.
To keep the presentation short, we suppose that for $(c,d)=(1,0)$, the solution space is one-dimensional (otherwise, it would be trivial). 
We get the representation
\begin{align*}
  q = \alpha\widetilde p_n := \alpha\left(p_{n,n} + \sum_{j=1}^{n} \alpha_{n,j} p_{n,n-j} \right).
\end{align*}
Note that the coefficient in front of $p_{n,n}$ does not vanish. Otherwise, $\alpha_n = 0$ which implies that all other coefficients vanish by the previous considerations.
We do not need a more explicit representation of $q$ for our later study but note that the coefficient of the monomial $x^n$ does not vanish. 

\begin{remark}
  It is possible to derive a more explicit representation for the case $n>2$, $(c,d) = (1,0)$. Define
  \begin{align*}
    \widetilde p(x,y) = \sum_{j=0}^n P_n^{(0,0)}(1-2x)P_{n-j}^{(0,0)}(1-2y).
  \end{align*}
  Let $F_T\colon \Tref \to T = \conv\{(1,0),(0,0),(1,-1)\}$ denote the affine mapping that maps $(0,0)$ to $(0,0)$, $(1,0)$ to $(1,-1)$ and $(1,1)$ to $(1,0)$. Then, $p = \widetilde p\circ F_T^{-1}$ satisfies $p|_{\Tref} \in P^{n-1,n,\perp}(\Tref)$, $p|_T \in P^{n-1,n,\perp}(T)$.\qed
\end{remark}

To close this section we consider the element 
\begin{align*}
  T'& = \conv\{(0,0),(0,1),(1/(d'-c'),-c'/(d'-c'))\}
\end{align*}
with $d'-c'<0$. Following the arguments from above (interchanging the role of the $x$ and $y$ variables) we obtain the following results. 
Suppose that $q$ satisfy $q|_{\Tref}\in  P^{n-1,n,\perp}(\Tref)$ and $q|_{T'} \in P^{n-1,n,\perp}(T')$. 
Then, $q=0$ except in the cases below: 

\noindent
\textbf{Case $c'=0$:}
We obtain that 
\begin{align*}
  q\in \linhull\{P_n^{(0,1)}(1-2y)\}.
\end{align*}

\noindent
\textbf{Case $d'=1$:}
Arguing as in case $d=1$ we find that
\begin{align*}
  q \in \linhull\{P_n^{(0,1)}(2(x+y)-1)\}
\end{align*}

\noindent
\textbf{Case $n=2$, $d'=c'-1$:}
One verifies that 
\begin{align*}
  q \in \linhull\{q_2'\}
\end{align*}
with $q_2'(x,y) = 40p_{2,2}(y,x) + 24(c-1)p_{2,1}(y,x) + (3c^2-6c+8)p_{2,0}(y,x)$.

\noindent
\textbf{Case $n>2$, $d'-c'=-1$:}
In the final case, we obtain 
\begin{align*}
  q(x,y) = \alpha \widetilde p_n(y,x).
\end{align*}
\subsection{Intersection of orthogonal polynomials on a patch}\label{sec:appendix:conclusion}
In this section, we consider a patch $\patch_z$ with $z\in\VV_0$.
W.l.o.g. we may assume that $\Tref\in\patch_z$.
Moreover, let 
\begin{align*} 
  T &= \conv\{(1,0),(0,0),(-c/(d-c),1/(d-c))\}, \\
  T'& = \conv\{(0,0),(0,1),(1/(d'-c'),-c'/(d'-c'))\},
\end{align*}
with $d-c<0$, $d'-c'<0$.
We distinguish between various cases according to the previous section. 

Take $q\in \ker(\Pi_{\patch_z}^{n-1}|_{P^n(\Patch_z)})$. From the previous section, we already know that $q$ is either trivial or lives in a one-dimensional space depending on specific values of $c,d$, and $c',d'$. 
We only need to study the following cases.

\noindent
\textbf{Case $c=0$.}
We have $q \in \linhull\{P_n^{(0,1)}(1-2x)\}$. 
Note that all solutions $P_n^{(0,1)}(1-2x)=0$ satisfy $x\in(0,1)$. This implies that $P_n^{(0,1)}(1-2x)|_{T'}$ does not change sign, which means that
\begin{align*}
  \int_{T'} P_n^{(0,1)}(1-2x) \,\di(x,y)\neq 0.
\end{align*}
We conclude that $P_n^{(0,1)}(1-2x)|_{T'}\notin P^{n-1,n,\perp}(T')$ and therefore $q=0$.

\noindent
\textbf{Case $c'=0$.}
We have $q\in \linhull\{P_n^{(0,1)}(1-2y)\}$. Arguing as in the case $c=0$ we conclude $q=0$.

\noindent
\textbf{Case $d-c=-1$ and $n=2$ with $d'=1$.}
Here we have for some $\alpha,\beta\in\R$ that
\begin{align*}
  q|_T(x,y) &= \alpha(40p_{2,2}(x,y)+24(c-1)p_{2,1}(x,y) + (3c^2-6c+8)p_{2,0}(x,y)), \\
  q|_{T'}(x,y) &= \beta P_2^{(0,1)}(2(x+y)-1).
\end{align*}
By comparing the coefficients of the two polynomials we find after a short computation that $\alpha=0=\beta$, or, equivalently, $q=0$.

\noindent
\textbf{Case $d'-c'=-1$ and $n=2$ with $d=1$.}
Arguing as in the case $d-c=-1$, $d'=1$ (and $n=2$) we conclude $q=0$.

\noindent
\textbf{Case $d=1$, $(c',d')=(1,0)$ and $n>2$.}
Recall that $q|_T \in \linhull\{P_n^{(0,1)}(2(x+y)-1)\}$. 
We claim that $P_n^{(0,1)}(2y-1) \in P^{n-1,n,\perp}(\widetilde T')$ with $\widetilde T' = \conv\{(-1,0),(0,1),(-1,1)\}$.
To see this, note that $P_n^{(0,1)}(1-2y)\in P^{n-1,n,\perp}(\Tref)$ and consider the affine transformation that maps $(0,0)$ to $(0,1)$, $(1,0)$ to $(-1,1)$ and $(0,1)$ to $(-1,0)$. We have that
\begin{align*}
  \int_{T'} P_n^{(0,1)}(2(x+y)-1) \,\di(x,y) &= \int_{\widetilde T} P_n^{(0,1)}(2y-1)\,\di(x,y)
  \\
  &= \int_{(-1,0)\times (0,1)} P_n^{(0,1)}(2y-1)\,\di(x,y) = \int_0^1 P_n^{(0,1)}(2y-1)\,\di y  \neq 0, 
\end{align*}
where $\widetilde T = \conv\{(-1,0),(0,0),(0,1)\}$.
The last identity follows the properties of Jacobi polynomials.
We conclude that $q$ must vanish.

\noindent
\textbf{Case $(c,d)=(1,0)$ and $n>2$ with $d'=1$.}
We argue as in the previous case with $d=1$, $(c',d')=(1,0)$ and conclude that $q=0$.

\noindent
\textbf{Case $d-c=-1$ and $n=2$ with $d'=c'-1$.}
Here we have for some $\alpha,\beta\in\R$ that
\begin{align*}
  q|_T(x,y) &= \alpha(40p_{2,2}(x,y)+24(c-1)p_{2,1}(x,y) + (3c^2-6c+8)p_{2,0}(x,y)), \\
  q|_{T'}(x,y) &= \beta(40p_{2,2}(y,x)+24(c'-1)p_{2,1}(y,x) + (3c'^2-6c'+8)p_{2,0}(y,x)).
\end{align*}
After a short computation, we find that $\alpha=0=\beta$, or, equivalently, $q=0$.

\noindent
\textbf{Case $(c,d)=(1,0)$ and $n>2$ with $(c',d')=(1,0)$.}
First consider the two triangles $K_1 = \conv\{(0,0),(1,0),(1,1)\}$, $K_2 = \conv\{(0,0),(1,1),(0,1)\}$ which define a decomposition of the square domain $(0,1)^2$. Then, consider a polynomial $\widetilde q$ with $\widetilde q|_{K_1}\in P^{n-1,n,\perp}(K_1)$ and $\widetilde q|_{K_2}\in P^{n-1,n,\perp}(K_2)$.
By symmetry considerations we must have $\widetilde q(x,y) = \widetilde q(y,x)$ for all $(x,y)\in (0,1)^2$.
Moreover, the affine map $(x,y)\mapsto (x,-x+y)$ sends the domain $K_1\cup K_2$ to the domain $\Tref\cup T$ and $q_1(x,y):=\widetilde q(x,x+y)$ satisfies $q_1|_{\Tref}\in P^{n-1,n,\perp}(\Tref)$ and $q_1|_{T}\in P^{n,n-1,\perp}(T)$. By the previous section, we thus have that
\begin{align*}
  q_1 \in \linhull\{\widetilde p_n\}.
\end{align*}
In particular, the coefficient of the monomial $x^n$ in $\widetilde p_n$ is non-vanishing (see previous section). 
Therefore, the coefficient $\alpha$ of the monomial $x^n$ in $\widetilde q(x,y)$ is non-vanishing. By symmetry, we have that $\alpha$ is the coefficient of the monomial $y^n$ in $\widetilde q(x,y)$.
Then, $2\alpha$ is the coefficient of the monomial $x^n$ in $q_1$ and $\alpha$ is the coefficient of the monomial $y^n$ in $q_1$. 

Consider a second affine mapping $(x,y)\mapsto (x-y,y)$. This maps the domain $K_1\cup K_2$ to the domain $T'\cup \Tref$ and $q_2(x,y) := \widetilde q(x+y,y)$ satisfies $q_2|_{\Tref}\in P^{n-1,n,\perp}(\Tref)$ and $q_2|_{T'}\in P^{n,n-1,\perp}(T')$.
Moreover, $\alpha$ is the coefficient of $x^n$ in $q_2$ and $2\alpha$ is the coefficient of $y^n$ in $q_2$. 

We conclude that $q|_T \in \linhull\{q_2\}$ and $q|_{T'} \in \linhull\{q_2\}$. By comparing the coefficients of $x^n$ and $y^n$, it is easy to see that $q=0$, which finishes the proof of Lemma~\ref{lem:fullrank}.

\noindent
\textbf{Case $d=1$, $d'=1$.}
In the final case we have that $(-c/(1-c),1/(1-c))$ and $(1/(1-c'),-c'/(1-c'))$ are points on the line connecting $(1,0)$ and $(0,1)$.
Clearly, not all triangles in the patch have points on this very same line. 
Therefore, we can choose an element of the patch, say, $\widetilde T$ and send it to $\Tref$ such that its neighboring (transformed) elements 
\begin{align*}
  \widetilde T'' &=\conv\{(-\widetilde c/(\widetilde d-\widetilde c),1/(\widetilde d-\widetilde c))\}, \\
  \widetilde T' &=\conv\{(1/(\widetilde d'-\widetilde c'),-\widetilde c'/(\widetilde d'-\widetilde c'))\} \\
\end{align*}
satisfy $\widetilde d\neq 1$ or $\widetilde d'\neq 1$. For $q\in P^{n-1,n,\perp}(\widetilde T') \cap  P^{n-1,n,\perp}(\widetilde T) \cap P^{n-1,n,\perp}(\Tref)$ we conclude $q = 0$.
\qed

\end{document}